\numberwithin{equation}{section}
\newtheorem{theorem}{Theorem}[section] 
\newtheorem{lemma}[theorem]{Lemma}
\newtheorem{proposition}[theorem]{Proposition}
\newtheorem{corollary}[theorem]{Corollary}
\theoremstyle{definition}
\newtheorem{remark}[theorem]{Remark}
\newtheorem{definition}[theorem]{Definition}
\newtheorem{example}[theorem]{Example}
\renewcommand{\phi}{\varphi}
\newcommand{\PP}{\mathbb{P}}
\newcommand{\FF}{\mathbb{F}}
\newcommand{\ZZ}{\mathbb{Z}}
\newcommand{\NN}{\mathbb{N}}
\newcommand{\QQ}{\mathbb{Q}}
\newcommand{\RR}{\mathbb{R}}
\newcommand{\CC}{\mathbb{C}}
\newcommand{\E}{\mathbb{E}}
\renewcommand{\leq}{\leqslant}
\renewcommand{\le}{\leqslant}
\renewcommand{\geq}{\geqslant}
\renewcommand{\ge}{\geqslant}
\renewcommand{\bar}{\overline}
\newcommand{\x}{\mathbf{x}}
\renewcommand{\c}{\mathbf{c}}
\renewcommand{\b}{\mathbf{b}}
\renewcommand{\r}{\mathbf{r}}
\newcommand{\fp}{\mathfrak{p}}
\DeclareMathOperator{\rk}{rk}
 \DeclareMathOperator{\disc}{disc}
\DeclareMathOperator{\Li}{Li}
\DeclareMathOperator{\Pic}{Pic}
\DeclareMathOperator{\Div}{Div}
\DeclareMathOperator{\Spec}{Spec}
\DeclareMathOperator{\Var}{Var} 
\DeclareMathOperator{\Cov}{Cov}
\DeclareMathOperator{\SL}{SL}
\DeclareMathOperator{\GL}{GL}
\let\emptyset\varnothing
\DeclareSymbolFont{bbold}{U}{bbold}{m}{n}
\DeclareSymbolFontAlphabet{\mathbbold}{bbold}
\renewcommand{\P}{\mathbb{P}}
\newcommand{\Q}{\mathbb{Q}} 
\newcommand{\F}{\mathbb{F}}
\newcommand{\N}{\mathbb{N}}
\newcommand{\R}{\mathbb{R}}
\newcommand{\Z}{\mathbb{Z}}
\newcommand{\A}{\mathbb{A}}
\renewcommand{\l}{\left}
\renewcommand{\r}{\right}
\renewcommand{\b}{\mathbf}
\renewcommand{\c}{\mathcal}
\renewcommand{\epsilon}{\varepsilon}
\renewcommand{\leq}{\leqslant}
\renewcommand{\geq}{\geqslant}
\DeclareMathOperator*{\Osum}{\sum{}^*}
\title{Multivariate normal distribution for integral
 points  on varieties}
\author{Daniel El-Baz}
\address{
Graz University of Technology\\
Institute of Analysis and Number Theory\\
Steyrergasse 30/II, 8010 Graz, Austria}
\email{el-baz@math.tugraz.at}
\author{Daniel Loughran} 
  \address{
  Department of Mathematical Sciences \\
University of Bath \\
Claverton Down \\
Bath \\
BA2 7AY \\
UK}
\urladdr{https://sites.google.com/site/danielloughran}
\author{Efthymios Sofos} 
\address{
Department of Mathematics\\
University of Glasgow,
University Place,
 Glasgow,  G12 8QQ, UK}
\email{efthymios.sofos@glasgow.ac.uk}
 \subjclass[2010]
{14G05; 
60F05, 
11N36. 
}
\date{\today}
\newcommand{\beq}[2]
{
\begin{equation}
\label{#1}
{#2}
\end{equation}
}
\begin{document}


\begin{abstract}
	Given a variety with coefficients in $\Z$, we study the distribution of the number of 
	primes dividing the coordinates as we vary an integral point. 
	Under suitable assumptions, we show that this has a 
	multivariate normal distribution. 
	We generalise this to more general Weil divisors, where we obtain
	a geometric interpretation of the covariance matrix.
	For our results we develop a version of the Erd\H os--Kac 
	theorem that applies to fairly general integer sequences and
	does not require a positive exponent of level of distribution.
\end{abstract}

\maketitle

\vspace{-1cm}

\setcounter{tocdepth}{1}
\tableofcontents

\vspace{-1,5cm}

\section{Introduction}
\label{introduction}
\subsection{Erd\H{o}s--Kac} \label{sec:EK_intro}

To study the prime factorisation of a non-zero integer $m$, Erd\H{o}s and Kac~\cite{EK40}
 considered the distribution of the function
\vspace{-0,1cm}
$$\omega(m ) = \mbox{number of distinct primes $p$ such that $p $ divides $m$}.$$
They showed  that $\omega(m)$  behaves like a normal 
distribution with mean $\log \log m$ and variance $\log \log m$.
More precisely, let
$\Omega_B = \{m\in \N: m\leq B \} $
be
equipped with the  uniform probability 
measure for $B\geq 1 $. Then as $B \to \infty$ the sequence of random variables
$$
\Omega_B \to \RR, \quad
m \mapsto \frac{\omega(m) - \log \log B}{\sqrt{\log \log B}}
$$
converges in distribution to the
normal distribution 
with mean
$0$ and variance $1$. 
Their work
is a foundational result in  
probabilistic number theory and opened up many new research directions;  we refer to the paper \cite{grs} and the references therein for various generalisations. 

In our paper we study prime divisors of integers in sparse sequences, with an emphasis on solutions to Diophantine equations.
A very special case of our results is as follows.
\begin{theorem}
\label{thm:sexycase}
Let  $f \in \Z[x_1,\ldots,x_n]$
be  a non-singular homogeneous polynomial
 with $n > (\deg(f) -1) 2 ^{\deg(f) } $. Let
$\Omega_B
\! = \!
 \{\x\in \Z^n: 
 f(\b x )=0, \max_i |x_i |  \leq B, \gcd(x_1,\dots,x_n) = 1 \}$ be
equipped with the uniform probability measure.
If $f(\x)= 0$ has a non-trivial integer solution, then as $B\to 
\infty$ the
random vectors
 	$$
\Omega_B \to \R^n, 
\quad
\b x=(x_1,\ldots , x_n ) 
 \mapsto 
\left(\frac{\omega(  x_1  ) - \log \log B}{\sqrt{\log \log B}}, \ldots, \frac{\omega( x_n   ) - \log \log B}{\sqrt{\log \log B}} \right)$$
	converge in distribution to the standard multivariate normal distribution 
	on $\R^n$.
\end{theorem}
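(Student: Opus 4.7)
The plan is to establish convergence by the method of moments. Since the standard multivariate normal on $\R^n$ has factorising moments $\E[Z_1^{k_1}\cdots Z_n^{k_n}] = \prod_i m_{k_i}$, with $m_k$ the $k$-th moment of $\mathcal{N}(0,1)$, it suffices to show that for every $(k_1,\ldots,k_n) \in \NN^n$,
\[
\E\!\left[\prod_{i=1}^n \left(\frac{\omega(x_i) - \log\log B}{\sqrt{\log\log B}}\right)^{k_i}\right] \xrightarrow[B\to\infty]{} \prod_{i=1}^n m_{k_i}.
\]
Following the standard Kubilius reduction, I would truncate $\omega$ to primes $p\leq y$ with $y = y(B)$ growing slowly, and work with the centred additive functions $\widetilde{\omega}_y(x_i) = \sum_{p\leq y}(\mathbf{1}_{p\mid x_i} - 1/p)$; Mertens's theorem provides the mean $\log\log B$, while the contribution of the large primes (and the truncation error) must be shown negligible in moments.

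The mixed moment of $\widetilde{\omega}_y$ expands into a multiple sum over primes; each summand requires counting primitive integral points $\x$ on $\{f=0\}$ in the box $\|\x\|_\infty \leq B$ with $q_i \mid x_i$ for squarefree moduli $(q_1,\ldots,q_n)$. Since $n > (\deg f - 1) 2^{\deg f}$, Birch's circle method furnishes an asymptotic of the shape
\[
B^{n-d}\,\sigma_\infty\, \prod_p \sigma_p(q_1,\ldots,q_n),
\]
where $\sigma_p(q_1,\ldots,q_n)$ is the $p$-adic density with the imposed divisibilities. A Hensel-style computation on the smooth variety $\{f=0\}/\FF_p$ gives, when $p\mid q_i$ exactly for $i\in S\subseteq\{1,\ldots,n\}$, the ratio $\sigma_p(q_1,\ldots,q_n)/\sigma_p = p^{-|S|} + O(p^{-|S|-1})$, because the subscheme $\{x_i = 0 : i\in S\}\cap\{f=0\}$ has codimension $|S|$ in the smooth variety (transversality is automatic when $n$ is large and $f$ is non-singular). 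The primitivity constraint is handled by M\"obius inversion on $\gcd(x_1,\ldots,x_n)$.

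Inserting these local ratios into the expansion and grouping primes by the subset $S\subseteq\{1,\ldots,n\}$ they contribute to, the main term of the mixed moment matches that of a sum of independent random variables with $\Pr(p\mid x_i) = 1/p$. Consequently each diagonal variance tends to $1$, while for $i\neq j$ the per-prime covariance is
\[
\E\!\left[(\mathbf{1}_{p\mid x_i} - 1/p)(\mathbf{1}_{p\mid x_j} - 1/p)\right] = \sigma_p(\{i,j\})/\sigma_p - p^{-2} = O(p^{-3}),
\]
whose sum over $p\leq y$ is $O(1)$, negligible against $\log\log B$. This yields identity covariance and hence the standard multivariate normal. The principal obstacle is securing the required uniformity of Birch's asymptotic in the moduli $q_i$: the unconditional exponent of level of distribution coming from the circle method is meagre, and a direct attack via the naive Erd\H os--Kac machinery needs more than one can easily prove. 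The paper's general Erd\H os--Kac theorem, advertised in the abstract as requiring no positive level of distribution, is designed precisely to bypass this obstruction via weaker moment-type inputs that still suffice for the central limit conclusion, and is the essential input to which the verification above must be fed.
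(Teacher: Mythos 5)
Your proposal correctly identifies the paper's own strategy and in outline follows exactly the same route: method of moments, Kubilius-style truncation to small primes, Birch's circle method for counting primitive integral points in congruence classes, local densities from Lang--Weil/Hensel yielding identity covariance, and the crucial recognition that the unconditional level of distribution from the circle method is too weak for a naive moment computation, which is why one needs an Erd\H os--Kac framework that works with an essentially zero exponent of level of distribution. Since you explicitly feed the verification into that general theorem rather than proving it, what you have written is a faithful road map of the paper rather than an independent proof. A few points you elide but which the paper handles explicitly and nontrivially: (i) the truncation point cannot simply be ``$y(B)$ growing slowly''; the paper takes $y = \mathcal{F}(B)^{\psi(B)}$ with $\psi(B) = \sqrt{\log\log\log\mathcal F(B)}/\sqrt{\log\log\mathcal F(B)}$ so that moduli $|\mathbf d| \le \mathcal{F}(B)^{o(1)}$ suffice, and showing that the discarded primes are negligible requires a lower-bound sieve argument (the Fundamental Lemma) to guarantee that typically $|x_i| \ge \mathcal{F}(B)^{\psi(B)}$, which you do not address; (ii) identity covariance on the diagonal requires $c_{i,i}=1$, i.e.\ irreducibility of $X \cap \{x_i=0\}$, which the paper derives from the Lefschetz hyperplane theorem rather than from ``transversality''; (iii) the uniform Birch asymptotic with polynomial dependence on the modulus and a power saving is not off-the-shelf --- the paper invokes van Ittersum's effective refinement and carries out a non-trivial change of variables and M\"obius inversion (Lemmas 4.4--4.7, Proposition 4.8) to establish the effective equidistribution hypothesis \eqref{eqn:eff_equ}. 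So: same approach as the paper, correctly identified, but with the core analytic machinery (the truncation apparatus, the sieve lower bound, and the effective Birch count) named rather than supplied.
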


By the \emph{standard multivariate normal distribution}, we mean the multivariate normal distribution with zero mean vector and identity covariance matrix. 
We refer the reader to \S\ref{sec:conventions} for a reminder on multivariate normal distributions.

One knows how to count the number of solutions to
 the equation $f(\x) = 0$ using the circle method. 
Our motivation comes from trying to understand the more subtle arithmetic properties of the solutions, and is partly motivated by Sarnak's saturation problem \cite{BGS10}, which asks whether there are solutions with 
coordinates being
 prime or 
 almost prime.

Theorem \ref{thm:sexycase} shows that the coordinate $x_i$ typically has $\log \log |x_i|$ prime factors. Moreover, it compares the numbers of prime factors of different coordinates; the fact that we obtain the identity covariance matrix means that the number of prime factors of different coordinates is `uncorrelated', something which is not a priori
obvious. We  have a purely geometric interpretation of this phenomenon, which we explain in more detail later  (Theorem \ref{thm:main}).

We are only aware of a few papers in the literature in probabilistic number theory which deal with a multivariate distribution: LeVeque \cite[\S4]{Lev49} 
on $(\omega(m),\omega(m+1))$
(stated by Erd\H{o}s without proof \cite{Erd46}), 
Halberstam \cite{Hal56}
again, on $(\omega(m),\omega(m+1))$,
and Tanaka \cite{Tan55}
on the distribution of 
$(\omega(f_1(m)), \ldots, \omega(f_n(m))$, where $f_i$ are  restricted to be 
pairwise coprime integer  univariate
polynomials. These can all be obtained as special cases of our most general result on a multivariate version of the Erd\H{o}s--Kac theorem for integer sequences satisfying certain hypotheses (see \S\ref{sec:EK}, in particular Theorem \ref{thm:mainthmr}). This more general result allows one to 
prove a general version of Tanaka's result, with no restrictions on $f_i$ and, furthermore, to
replace $\omega$ by any strongly additive function in Theorem \ref{thm:sexycase}. It
may be viewed as a multidimensional version of Billingsley's work~\cite[\S 3]{MR0466055}.

\subsection{Distribution of the prime divisors of the coordinates} \label{sec:coordinates}
Let $X \subset \PP^{n-1}_\QQ$ be a projective variety over $\QQ$. 
For 
 $x  
\in \PP^{n-1} (\QQ)$ 
 we choose a representative $\b x \in \Z^n$ with $\gcd(x_1,\ldots, x_n)=1$
such that $x=(x_1:\dots:x_n) $. Recall that the naive
 height of $x$ is defined through 
$H(x) = \max \{|x_1|,\dots,|x_n|\}.$
We are interested in the distribution of  $\omega(x_i)$, which only depends on $x\in \PP^{n-1} (\QQ)$ and is well-defined providing $x_i \neq 0$.

\subsubsection{Complete intersections}

For $R \ge 1$ and $1\leq i \leq R $, let $f_i \in \Z[X_1,\ldots,X_n]$ be homogeneous of the same degree $D$. 
The Birch rank, denoted by $\mathfrak{B}(\b  f)$,
is defined to be the codimension of the affine variety in 
$\mathbb{C}^n$  given by 
\begin{equation}
\rk \left(\left(\frac{\partial f_i(\b x)}{\partial x_j}\right)_{\substack{1\leq i\leq R,1\leq j\leq n}}\right)<R.
\end{equation}

\begin{theorem} \label{thm:CI}
	Let $X \subset \PP^{n-1}_\QQ$ be the
	 complete intersection 
given by 	$f_1=\ldots=f_R=0$
	as above and let 
	$\Omega_B=\{x \in X(\Q): H(x) \leq B , x_1\cdots x_n \neq 0 \}$ be
	equipped with the
	uniform probability measure. 
	Assume that $X$ is smooth  and 
	 $\mathfrak{B}(\b f)>2^{D-1}(D-1)R(R+1)$. If $X(\Q) \neq \emptyset$ then
	as $B\to \infty$	the random vectors
	$$\Omega_B \to \R^n,  \quad
	x= (x_1:\ldots: x_n) \mapsto 
	\left(\frac{\omega(x_1) - \log \log B}{\sqrt{\log \log B}}, \ldots, \frac{\omega (x_n) - \log \log B}{\sqrt{\log \log B}} \right)$$
	converge in distribution to the standard multivariate normal distribution  on $\R^n$.
	\end{theorem}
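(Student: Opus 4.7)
The plan is to deduce Theorem~\ref{thm:CI} from the general multivariate Erd\H os--Kac theorem for integer sequences, Theorem~\ref{thm:mainthmr}, applied to the sequence $\Omega_B$ equipped with the $n$-tuple of strongly additive functions $\b x \mapsto (\omega(x_1), \ldots, \omega(x_n))$. The task thus reduces to verifying the arithmetic hypotheses of that theorem for a smooth complete intersection of sufficiently large Birch rank.

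First I would record the output of the circle method. The Birch rank hypothesis $\mathfrak{B}(\b f) > 2^{D-1}(D-1)R(R+1)$ is tailored to Birch's theorem in its form with congruence conditions: for every fixed modulus $d$ and residue $\b a$, the number of integral solutions of $f_1 = \cdots = f_R = 0$ in a box of sidelength $B$ with $\b x \equiv \b a \pmod d$ admits an asymptotic formula of the shape $c_\infty \prod_p c_p^{(d,\b a)} B^{n-RD}$ with an absolutely convergent singular series. Passage to primitive vectors by M\"obius inversion and then to projective points in $X(\Q)$ is routine, and the cut $x_1 \cdots x_n \neq 0$ removes a thin set of negligible contribution.

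Next, for each prime $p$ and each subset $S \subseteq \{1, \ldots, n\}$, I would compute the local density $c_p^{(S)}$ obtained by additionally imposing $p \mid x_i$ for all $i \in S$. Since $X$ is smooth, $X_{\F_p}$ is smooth for all but finitely many $p$, and the coordinate hyperplanes $\{x_i = 0\}$ meet $X_{\F_p}$ transversally outside a further sparse set of primes. Hensel's lemma and a dimension count then yield
\begin{equation*}
\frac{c_p^{(S)}}{c_p} = \frac{1}{p^{|S|}} + O_X\!\left(\frac{1}{p^{|S|+1}}\right)
\end{equation*}
for all sufficiently large $p$. The essential feature is that this main term \emph{factorises} across coordinates; it is exactly this geometric fact that produces the identity covariance matrix in the limit.

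Finally, I would feed these densities into Theorem~\ref{thm:mainthmr}. The density for $S = \{i\}$ supplies the first moment $\sum_{p \leq Y} \Pr_{\Omega_B}[p \mid x_i] = \log\log Y + O(1)$, while the density for $S = \{i, j\}$, combined with the Chinese remainder theorem across distinct primes, yields the asymptotic independence of $\omega(x_i)$ and $\omega(x_j)$ needed for the off-diagonal covariance to vanish; the diagonal contribution $p = q$, $i \ne j$ sums to $\sum_p 1/p^2 < \infty$ and is harmless. The main technical obstacle, as always with Erd\H os--Kac statements on thin sets, is the regime of primes $p$ that are large compared to $B$, where Birch's method no longer gives an asymptotic. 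This is precisely where the general Erd\H os--Kac theorem developed in the paper is designed to help: it avoids requiring a positive exponent of level of distribution, so the only input one actually needs is the small-prime local density information above, which the Birch rank hypothesis already supplies.
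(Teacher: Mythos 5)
Your overall strategy — apply the general multivariate Erd\H os--Kac machinery with the arithmetic input supplied by the circle method — is the same as the paper's. However, there is a significant gap in how you propose to verify the hypotheses, and it sits exactly at the point you describe as ``routine'' or ``already supplied.''

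The hypothesis \eqref{eq:thebigassumptnqwe} (resp.\ its reformulation \eqref{eqn:eff_equ}) is not a statement about an asymptotic formula for each fixed modulus $d$; it demands an error term with explicit polynomial dependence on the modulus $Q$, uniformly for $Q$ up to $B^{o(1)}$. Birch's original theorem gives an asymptotic formula for $\b x \equiv \b a \bmod d$ with $d$ fixed, with no quantitative control on how the error degrades as $d$ grows. The weak level-of-distribution feature of Theorem~\ref{thm:mainthmr} does let one take $Q$ as small as $B^{o(1)}$ rather than $B^{\alpha}$, but it does not remove the need for uniformity. To obtain it, the paper devotes Lemmas~\ref{lem:mobiusinvers}--\ref{lem:vanter2} and Proposition~\ref{prop:newbirch} to a careful analysis: a M\"obius inversion in which the M\"obius variable is truncated at $B^{\eta_1}$, a change of variables that converts the congruence condition into a non-homogeneous Birch system with coefficients of size $Q^{O(1)}$, and — crucially — van Ittersum's quantitative version of Birch's theorem (Lemma~\ref{lem:bachcel}), which gives an error term with explicit polynomial dependence on the coefficients. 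This is a genuinely non-trivial new statement (Proposition~\ref{prop:newbirch}); it is not a formal consequence of the Birch rank hypothesis. Your claim that ``the only input one actually needs is the small-prime local density information above, which the Birch rank hypothesis already supplies'' therefore overlooks the main technical content of this part of the paper.

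Two smaller remarks. First, the identity covariance is established in the paper not by a local transversality argument but by the observation that each hyperplane section $D_i = X \cap \{x_i = 0\}$ is irreducible over $\Q$ (Lemma~\ref{lemma:Lefschetz}, via the Lefschetz hyperplane section theorem and $\Pic X \cong \Z$), so distinct $D_i$ share no component; this feeds into the general geometric formula $(i,j) \mapsto \langle D_i, D_j\rangle/\|D_i\|\|D_j\|$ from Theorem~\ref{thm:main} and Corollary~\ref{cor:D_i}. Your ``transversality + Hensel'' route is morally in the same direction but does not clearly produce the irreducibility statement needed for $c_{i,i}=1$. Second, the paper's proof is routed through the geometric Theorem~\ref{thm:main} rather than directly through Theorem~\ref{thm:mainthmr}; this is an organizational difference only, but it is what makes the covariance computation clean.
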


\subsubsection{Homogeneous spaces}
Another class of examples to which our main result applies is given by certain symmetric varieties in affine space.
We defer the precise definition of this class to \S \ref{sec:symmvar} and instead present our results for two explicit families of such varieties.

Let $Q$ be a non-degenerate, indefinite integral quadratic form in $n \ge 3$ variables. For each $k \in \Z \setminus \{0\}$, we consider the variety 
\[
L_k: \qquad Q(\b  x) = k \quad \subset \A^n_\Z
\]
equipped with the usual height function $H(\x) = \max_i|x_i|$.

\begin{theorem} \label{thm:indQF}
Let $k \in  \Z \setminus \{0\}$ and $n \ge 3$. If $n=3$, assume that  $-k \operatorname{disc}(Q)$ is not a perfect square. Let $\Omega_B = \{ \x \in L_k(\Z): H(\x) \leq B, x_1\cdots x_n \neq 0\}$ be
equipped with the uniform probability measure. If $L_k(\Z) \neq \emptyset$ then
	as $B\to \infty$	the random vectors
$$\Omega_B \to \R^n,  \quad
	\x\mapsto  \left(\frac{\omega(x_1) - \log \log B}{\sqrt{\log \log B}}, \ldots, \frac{\omega(x_n) - \log \log B}{\sqrt{\log \log B}} \right)$$
converge in distribution to the standard multivariate normal distribution on $\R^n$.
\end{theorem}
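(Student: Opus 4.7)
The strategy is to deduce the result from the general multivariate Erd\H os--Kac theorem for integer sequences (Theorem~\ref{thm:mainthmr}) applied to the sequence
\[
\cS = \{\x \in L_k(\Z) : x_1\cdots x_n \neq 0\}
\]
ordered by the height $H$. Because Theorem~\ref{thm:mainthmr} is advertised as requiring no positive exponent of level of distribution, the hypotheses to verify reduce essentially to two items: (i) a main-term asymptotic for $\#\{\x \in \cS : H(\x) \leq B\}$ as $B \to \infty$; and (ii) for each squarefree modulus $q$ and each tuple of subsets $S_p \subset \{1,\dots,n\}$ indexed by the primes $p \mid q$, an asymptotic count of $\x \in \cS$ with $p \mid x_i$ for every $i \in S_p$ and every $p \mid q$, with a main term of the expected shape $\delta(q, \{S_p\}) \cdot N(B)$, plus a manageable error.

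For $n \geq 4$ the circle method for quadratic forms delivers both counts together with sharp local densities $\delta(q, \{S_p\}) = \prod_{p \mid q} \sigma_p(S_p)$, where $\sigma_p(S_p)$ is the normalised $p$-adic density of $L_k \cap \{x_i = 0 : i \in S_p\}$ inside $L_k$. For $n = 3$ the circle method is unavailable, but $L_k$ is a homogeneous space for $\mathrm{SO}(Q)$; the hypothesis that $-k\disc(Q)$ is not a square forces the stabiliser of a point to be an anisotropic torus, placing us squarely in the Duke--Rudnick--Sarnak / Eskin--McMullen framework (equivalently, Siegel's mass formula together with bounds for Fourier coefficients of half-integral weight forms), which yields the required asymptotics with congruence conditions. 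Inserting these into the abstract Erd\H os--Kac framework will provide the counts that Theorem~\ref{thm:mainthmr} demands.

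The core local computation is that for a single coordinate $i$ and a prime $p$ of good reduction, the proportion of smooth points in $L_k(\Z/p\Z)$ with $p \mid x_i$ equals $1/p + O(1/p^2)$, since $\{x_i = 0\} \cap L_k$ is a codimension-one subvariety of $L_k$; for $i \neq j$, the proportion with $p \mid x_i$ and $p \mid x_j$ is $1/p^2 + O(1/p^3)$, the intersection being generically transverse modulo $p$. Summing the $\sigma_p$ over primes $p \leq B$ will produce mean $\log\log B + O(1)$ and variance $\log\log B + O(1)$ for each $\omega(x_i)$, together with off-diagonal covariance $O(1)$, and after the Erd\H os--Kac normalisation this gives the standard multivariate normal law on $\R^n$.

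The main obstacle is the ternary case $n = 3$: the counts of integer points on $L_k$ with congruence conditions modulo $q$ have error terms noticeably weaker than those furnished by the circle method in higher dimensions, and any naive demand of a positive level of distribution would be out of reach. The very point of the abstract Erd\H os--Kac criterion developed in \S\ref{sec:EK} is to absorb exactly this kind of weak error; the plan is therefore to reformulate the ternary congruence count in a form compatible with the hypotheses of Theorem~\ref{thm:mainthmr} (where typically only an amortised or averaged error in $q$ is needed) and then invoke that theorem directly.
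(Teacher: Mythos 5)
Your high-level plan (equidistribution of integral points in congruence classes, then feed this into the abstract multivariate Erd\H os--Kac criterion) is the right shape, but it departs from the paper's route and, more importantly, skips precisely the step where the $n=3$ hypothesis is used.

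On the route: the paper does not invoke the circle method for $n\geq 4$ and Duke--Rudnick--Sarnak for $n=3$ separately. It observes that for all $n\geq 3$, $L_k$ (with the non-square condition on $-k\disc(Q)$ when $n=3$) is a symmetric variety of the type studied by Browning and Gorodnik, and cites their effective equidistribution result (Proposition~\ref{prop:BG}, giving a power-saving error $O(\ell^{\dim L + 2\dim G}\mu_\infty(Y;B)^{1-\delta})$). That result supplies \eqref{eqn:eff_equ} uniformly for $n\geq 3$, after which Theorem~\ref{thm:indQF} is a corollary of the geometric Theorem~\ref{thm:main} (via Theorem~\ref{thm:symvar}). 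Your plan bypasses Theorem~\ref{thm:main} and applies the raw sequence version Theorem~\ref{thm:mainthmr} directly; this is possible but then you must re-derive the local density estimates that Proposition~\ref{prop:Serre} already packages, and you forgo the clean geometric interpretation of the covariance matrix.

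On the gap: your assertion that ``the proportion of points in $L_k(\F_p)$ with $p\mid x_i$ equals $1/p + O(1/p^2)$, since $\{x_i=0\}\cap L_k$ is codimension one'' is not justified, and it is not even correct pointwise in general. Codimension one gives the right order of magnitude, but the leading constant is governed by the number and Galois behaviour of the irreducible components of $D_i := L_k\cap\{x_i=0\}$: the Lang--Weil count yields $\#\mathcal{D}_i(\F_p)/\#L_k(\F_p) = z_p/p + O(p^{-3/2})$, where $z_p$ counts degree-one primes in the fields of constants of the components. If $D_i$ were a union of two conjugate lines, for instance, $z_p$ would oscillate between $0$ and $2$ and the mean $c_{i,i}$ in Theorem~\ref{thm:mainthmrqw} would still be $1$, but if $D_i$ split into two rational lines one would get $c_{i,i}=2$, not $1$. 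Likewise your claim that $D_i\cap D_j$ is ``generically transverse mod $p$'' is a placeholder for the genuine statement that $D_i$ and $D_j$ share no component. Establishing that each $D_i$ is irreducible (hence $c_{i,i}=1$) and that $D_i\neq D_j$ (hence $c_{i,j}=0$) is exactly where the hypothesis that $-k\disc(Q)$ is not a perfect square enters in the $n=3$ case (where the hyperplane section of a quadric surface could otherwise degenerate into two rational lines), and for $n\geq 4$ one must still check that the rank of $Q$ restricted to each coordinate hyperplane is at least two. Your proposal never invokes the non-square hypothesis and never verifies this irreducibility, so the identity covariance matrix is asserted rather than proved.
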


For $n \ge 2$ and $k \in \Z \setminus \{ 0 \}$, consider the variety 
\[
V_{n,k}: \qquad \det(M) = k \quad \subset \A^{n^2}_\Z,
\]
where $\det$ denotes the determinant,   viewed as a homogeneous polynomial of degree $n$ (in particular, $V_{n,1} = \mathrm{SL}_n$). 

\begin{theorem} \label{thm:deteqn}
Let $k \in \Z \setminus \{ 0 \}$, $n \geq 2$
and  $\Omega_B = \{ M = (m_{i,j}) \in V_{n,k}(\Z) : H(M) \leq B, m_{i,j} \neq 0\}$
be equipped with the uniform probability measure. As $B \to \infty$ the random vectors 
$$\Omega_B \to \R^{n^2}  \quad
	M=(m_{i,j}) \mapsto \left(\frac{\omega(m_{i,j}) - \log \log B}{\sqrt{\log \log B}}\right)_{i,j \in \{1,\dots,n\}}$$
converge in distribution to the standard multivariate normal distribution on $\R^{n^2}$.
\end{theorem}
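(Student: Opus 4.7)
The plan is to derive Theorem~\ref{thm:deteqn} as a special case of the general framework for symmetric varieties developed in \S\ref{sec:symmvar}, combined with the multivariate Erd\H os--Kac theorem (Theorem~\ref{thm:mainthmr}) applied to the $n^2$-tuple of coordinate functions $(m_{i,j})_{1 \leq i,j \leq n}$. Geometrically, $V_{n,k}$ is a homogeneous space: $\SL_n \times \SL_n$ acts on it via $(g,h) \cdot M = g M h^{-1}$, preserving the determinant, with the stabilizer of any point being $\SL_n$ embedded diagonally, so $V_{n,k}$ is a symmetric variety in the sense of that subsection. Moreover $V_{n,k}(\Z) \neq \emptyset$ for every $k \neq 0$ (e.g.\ $\operatorname{diag}(k,1,\ldots,1)$), so no Hasse principle assumption is needed to get started.

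The substantive input required by Theorem~\ref{thm:mainthmr} is a counting asymptotic
\[
\#\bigl\{M \in V_{n,k}(\Z) : H(M) \leq B,\ d_{i,j} \mid m_{i,j}\ \forall\, i,j\bigr\} \sim c \cdot \rho(\b d) \cdot B^{n^2 - 1}
\]
as $B \to \infty$, where $\rho(\b d) = \prod_p \sigma_p(\b d)$ is a product of local densities, with enough uniformity in $\b d$ for the Erd\H os--Kac apparatus to digest. This can be extracted from the counting theorems for integer points on affine symmetric varieties of Duke--Rudnick--Sarnak and Eskin--McMullen, since the congruence conditions cut out a finite union of orbits of a suitable congruence subgroup on which the same equidistribution statement holds. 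The key advantage exploited here is that Theorem~\ref{thm:mainthmr} requires no positive exponent of level of distribution, only a logarithmic saving in $\b d$; this brings the problem within reach of homogeneous dynamics, whereas the degree-$n$ equation $\det(M) = k$ would be hopeless by direct circle method for $n \geq 3$.

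Next, one would check that the limiting covariance matrix is the identity. This reduces to the local statement that, for every prime $p$ and distinct pairs $(i,j) \neq (i',j')$, the density of $M \in V_{n,k}(\F_p)$ satisfying $p \mid m_{i,j}$ and $p \mid m_{i',j'}$ is $p^{-2}\bigl(1 + O(p^{-1})\bigr)$, so that the off-diagonal correlation is $O(p^{-3})$ and hence summable over primes. Geometrically this expresses the pairwise transversality of the coordinate divisors $D_{i,j} := \{m_{i,j} = 0\} \subset V_{n,k}$: the scheme-theoretic intersection $D_{i,j} \cap D_{i',j'}$ has codimension $\geq 2$ in $V_{n,k}$, as one checks directly by noting that the two linear conditions $m_{i,j}=0$ and $m_{i',j'}=0$ restrict the determinant hypersurface to a proper closed subvariety (indeed the intersection is even empty for $n=2$ with degenerate index choices). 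This is precisely the geometric input that the framework of \S\ref{sec:symmvar} converts into the identity covariance matrix after normalisation.

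The main obstacle, in my view, is securing the uniform counting asymptotic above---the degree-$n$ determinant equation forces one to exploit the group structure rather than employ analytic methods directly. For $k = 1$ this is essentially counting in $\SL_n(\Z)$; for general $k \neq 0$, where $V_{n,k}$ is a torsor rather than a group, one must additionally track local densities at primes dividing $k$, but these contribute only finitely many bounded correction factors which wash out in the Erd\H os--Kac normalisation.
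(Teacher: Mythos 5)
Your approach matches the paper's in structure: realise $V_{n,k}$ as a symmetric variety under the $\SL_n \times \SL_n$ action with diagonal stabiliser, invoke effective equidistribution to verify the hypothesis \eqref{eqn:eff_equ}, and then feed the geometry of the coordinate divisors $D_{i,j} = \{m_{i,j}=0\}$ into Theorem~\ref{thm:main} (via Theorem~\ref{thm:symvar}) to read off the covariance matrix. Two points deserve correction or sharpening.

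First, citing Duke--Rudnick--Sarnak and Eskin--McMullen for the equidistribution input is not quite enough: those results give asymptotics for each fixed congruence condition but without an explicit error term in the modulus, whereas \eqref{eqn:eff_equ} demands an error of the shape $O(Q^M B^{-\eta})$ with a power saving in $B$ uniform in $Q$. The paper's source for this is Browning--Gorodnik \cite{BroGor} (building on the quantitative ergodic theorems of Gorodnik--Nevo), whose Proposition~3.1 is exactly what is reproduced as Proposition~\ref{prop:BG}; it delivers the error $O(\ell^{\dim L + 2\dim G}\mu_\infty(Y;B)^{1-\delta})$, which is the effective input Theorem~\ref{thm:mainthmr} needs. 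Your remark that only a very small modulus range is needed (so no positive level of distribution) is correct, but one still needs an effective theorem, not merely an ergodic one.

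Second, your verification that the covariance matrix is the identity is done at the level of local densities (off-diagonal correlation $O(p^{-3})$, hence summable, hence zero limiting correlation). This is a valid alternative to the paper's geometric argument, but it is incomplete as written: to justify the normalisation by $\log\log B$ rather than $c_{(i,j),(i,j)}\log\log B$ you must also show $c_{(i,j),(i,j)}=1$, i.e.\ that each coordinate divisor $D_{i,j}$ is irreducible on $V_{n,k}$. In local terms this is the assertion that the density of $p\mid m_{i,j}$ is $p^{-1}(1+O(p^{-1}))$ with leading constant exactly $1$, which you implicitly use but do not establish. The paper handles both pieces (irreducibility of each $D_{i,j}$ and disjointness of distinct $D_{i,j}$) in one geometric stroke by applying a Lefschetz hyperplane argument to the projectivised hypersurface $\det(M)=kz^n$ in $\PP^{n^2}$; your codimension-$\geq 2$ observation supplies only the second piece, so the first needs to be added (e.g.\ by the same Lefschetz argument, or by an explicit point count over $\F_p$).
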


\subsubsection{Conics} \label{sec:conics}
In all the above cases, we obtained the identity covariance matrix, meaning that the random variables given by each coordinate are independent. In the case of plane conics however, we obtain a very different result. Firstly, we need to choose a different normalisation, as it turns out that $\omega(x_i)$ need not have average order $\log \log B$ in general. Secondly, there may be non-trivial correlations.

\begin{theorem} \label{thm:conics}
	Let $C \subset \PP^2_\QQ$ be a smooth plane conic with $C(\Q) \neq \emptyset$. Let 	$\Omega_B=\{x \in C(\Q): H(x) \leq B , x_1x_2x_3 \neq 0 \}$ 
	be
	equipped with the
	uniform probability measure. 
	Let $c_{i,j}$ denote the number of common irreducible components (counted without multiplicity) of 
	the divisors $x_i = 0$ and $x_j = 0$ on $C$. Then the random
	vectors
	$$\Omega_B \to \R^3,  \quad
	x \mapsto 
	\left(\frac{\omega(x_1) - c_{1,1}\log \log B}{\sqrt{c_{1,1}\log \log B}},
	\frac{\omega(x_2) - c_{2,2}\log \log B}{\sqrt{c_{2,2}\log \log B}},
	\frac{\omega(x_3) - c_{3,3}\log \log B}{\sqrt{c_{3,3}\log \log B}} \right)$$
	converge in distribution
	to a central multivariate normal distribution with
	covariance matrix whose $(i,j)$-entry is
	$(c_{i,j}/ \sqrt{c_{i,i} c_{j,j}})$.
\end{theorem}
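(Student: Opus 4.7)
The plan is to parametrise $C$ by $\PP^1$ and reduce to a multivariate Erd\H{o}s--Kac statement for three binary quadratic forms, then apply Theorem~\ref{thm:mainthmr}. Since $C$ is a smooth plane conic with a rational point, fix an isomorphism $\phi\colon \PP^1_\QQ \xrightarrow{\sim} C$ defined over $\QQ$; after clearing denominators, $\phi(s:t)=(F_1(s,t):F_2(s,t):F_3(s,t))$ for binary quadratic forms $F_1,F_2,F_3 \in \Z[s,t]$ whose coefficients have joint $\gcd$ equal to~$1$. A standard lemma then shows that for primitive $(s,t)\in \Z^2$ the integer $\gcd(F_1(s,t),F_2(s,t),F_3(s,t))$ divides a fixed constant depending only on $\phi$, so $\omega(x_i) = \omega(F_i(s,t)) + O(1)$ when $(x_1:x_2:x_3) = \phi(s:t)$. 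The condition $H(x)\leq B$ translates, up to bounded multiplicative constants, to $\max(|s|,|t|)\leq B^{1/2}$, and since $\log\log B^{1/2}=\log\log B+O(1/\log B)$ the change of scale is irrelevant after dividing by $\sqrt{\log\log B}$. The problem thus reduces to the joint distribution of $(\omega(F_i(s,t)))_{i=1}^{3}$ with $(s,t)$ a uniformly random primitive pair in the box $\max(|s|,|t|)\leq B^{1/2}$.

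Next, factor each $F_i$ over $\Z[s,t]$ as $F_i = c_i \prod_{\ell=1}^{k} L_\ell^{a_{i,\ell}}$, where $L_1,\ldots,L_k$ is a fixed family of pairwise non-proportional irreducible binary forms (each linear or irreducible quadratic) exhausting the irreducible divisors of $F_1F_2F_3$ and $a_{i,\ell}\in\{0,1,2\}$. Under $\phi$, the divisor $(x_i=0)$ on $C$ pulls back to the zero scheme of $F_i$ on $\PP^1$, whose reduced irreducible components over $\QQ$ are in bijection with the $L_\ell$ for which $a_{i,\ell}\geq 1$. Consequently
\[
c_{i,j} = \#\{\ell : a_{i,\ell}\geq 1 \text{ and } a_{j,\ell}\geq 1\}.
\]

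I would then invoke the general multivariate Erd\H{o}s--Kac result (Theorem~\ref{thm:mainthmr}) applied to the sequence of tuples $(L_\ell(s,t))_{\ell=1}^{k}$ indexed by primitive $(s,t)$ in the box, yielding that
\[
\left(\frac{\omega(L_\ell(s,t))-\log\log B}{\sqrt{\log\log B}}\right)_{\ell=1}^{k}
\]
converges in distribution to the standard multivariate normal on $\R^k$. Using the identity $\omega(F_i(s,t)) = \sum_{\ell:\, a_{i,\ell}\geq 1} \omega(L_\ell(s,t)) + O(1)$ (the $O(1)$ accounting for the bounded set of primes dividing the content $c_i$ and the pairwise resultants of the $L_\ell$), the vector $(\omega(F_i(s,t)))_{i=1}^{3}$ equals, up to bounded error, the fixed $3\times k$ zero-one matrix $A=(\mathbf{1}[a_{i,\ell}\geq 1])$ applied to $(\omega(L_\ell(s,t)))_{\ell=1}^{k}$. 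Since linear images of multivariate normals remain multivariate normal, one obtains convergence of $(\omega(F_i))_{i=1}^{3}$ to a multivariate normal with mean $(c_{i,i}\log\log B)_i$ and covariance $(c_{i,j}\log\log B)_{i,j}=AA^{\top}\log\log B$. Normalising the $i$-th coordinate by $\sqrt{c_{i,i}\log\log B}$ then produces the claimed correlation matrix $(c_{i,j}/\sqrt{c_{i,i}c_{j,j}})_{i,j}$.

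The main obstacle will be verifying the hypotheses of Theorem~\ref{thm:mainthmr} for the sequence of values of pairwise non-proportional irreducible binary forms in two variables restricted to primitive pairs, particularly when several of the $L_\ell$ are irreducible quadratics whose splitting fields may interact non-trivially and require a simultaneous Chebotarev input. Granted this analytic input, the passage to $(\omega(F_i))_{i=1}^{3}$ and the identification of the covariance matrix are a short linear-algebra computation together with the geometric description of $c_{i,j}$ given above.
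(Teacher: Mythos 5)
Your approach is correct in outline, but it takes a genuinely different route from the paper and, as you acknowledge, leaves the analytic hypotheses unverified. The paper's proof is much shorter and works at a higher level of abstraction: since $C \cong \PP^1_\QQ$ once a rational point is chosen, the effective equidistribution hypothesis \eqref{eqn:eff_equ} for $\PP^1$ (established in the cited reference, with a minor modification to allow the pulled-back height) lets one apply Theorem~\ref{thm:main} directly to the reduced divisors $D_i = (x_i=0)_{\mathrm{red}}$ on $C$. All of the Chebotarev/Lang--Weil input you worry about is then handled once and for all by Proposition~\ref{prop:Serre} and Corollary~\ref{cor:D_i}, which compute $\sum_{p\leq T}\#\mathcal{D}_i(\FF_p)/\#\mathcal{X}(\FF_p)$ and the cross-terms purely geometrically, and the covariance matrix $(c_{i,j}/\sqrt{c_{i,i}c_{j,j}})$ pops out as the Gram matrix of the normalized divisor classes.

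What you propose instead is to parametrize explicitly by binary quadratic forms $F_i(s,t)$, factor into irreducible binary forms $L_\ell$, apply Theorem~\ref{thm:mainthmr} to the tuple $(L_\ell(s,t))_\ell$ to get a standard normal in $\R^k$, and push forward along the $0$--$1$ matrix $A=(\mathbf 1[a_{i,\ell}\geq 1])$. This is essentially a hand-unrolled version of the proof of Theorem~\ref{thm:main} in the special case of $\PP^1$: the family $(L_\ell)$ corresponds precisely to the irreducible components of $D_1\cup D_2\cup D_3$, the claim $g_{\ell,\ell'}(p)-g_\ell(p)g_{\ell'}(p)=O(1/p^2)$ for $\ell\neq\ell'$ is exactly the statement that distinct integral divisors meet in a set of codimension $\geq 2$, the Chebotarev input to get $\sum_{p\leq T}g_\ell(p)=\log\log T+O(1)$ is exactly Proposition~\ref{prop:Serre}(2) for a $0$-dimensional integral closed subscheme, and the effective error term for divisibility conditions on primitive $(s,t)$ in a box is the $\PP^1$ case of \eqref{eqn:eff_equ}. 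So none of your steps is wrong — the linear-algebra reduction via $A$ and the use of Slutsky to absorb the $O(1)$ coming from content and resultant primes are fine — but you would be reproving, by ad hoc means, things the paper has already packaged cleanly. The shortest path from where you are is simply to observe that, after the parametrization, your sequence $(L_\ell(s,t))_\ell$ is exactly $(\omega_{\mathcal{D}_\ell}(x))_\ell$ in the notation of Theorem~\ref{thm:main}, and cite that result instead of re-verifying the hypotheses of Theorem~\ref{thm:mainthmr} from scratch.
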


\begin{example} \hfill
	\begin{enumerate}
	\item Take $C: x_1^2 + x_2^2 = x_3^2$. The divisors $x_1=0$ and $x_2=0$
	have two irreducible components while $x_3=0$ is irreducible, and these have
	no components in common. Hence we just
	obtain the identity matrix for the covariance
	 matrix
	$$
\begin{pmatrix}
	1 & 0 & 0 \\
	0 & 1 & 0 \\
	0 & 0 & 1 
\end{pmatrix},$$
so there are no correlations between the number of prime divisors of the coordinates.
	\item Take $C: x_1x_2 + x_2x_3 + x_3x_1 = 0$. Every divisor
	$x_i = 0$ is a union of two rational points, and they each contain
	one point in common. We obtain the covariance matrix
	$$
\begin{pmatrix}
	1 & 1/2 & 1/2 \\
	1/2 & 1 & 1/2 \\
	1/2 & 1/2 & 1 
\end{pmatrix}.$$
	This is not the identity matrix, which is reflected by the fact
	that there is a non-trivial relation between the 
	prime divisors of $x_i$ and $x_j$,
	as is clear from the equation.
	\item Take $C : x_1x_2=x_3^2$. Here $x_1=0$ and $x_2=0$ are both irreducible and are the irreducible components of  $x_3 = 0$ (we do not count irreducible components with multiplicity). The covariance
	matrix is therefore
	$$
\begin{pmatrix}
	1 & 0 & 1/\sqrt{2} \\
	0 & 1 & 1/\sqrt{2} \\
	1/\sqrt{2} & 1/\sqrt{2} & 1 
\end{pmatrix}.$$	
This matrix is singular; this means that the associated probability measure
is supported on a proper linear subspace of $\R^3$. From the equation it is also clear that the prime divisors of $x_3$ 
are completely determined by those of $x_1$ and $x_2$.
	\end{enumerate}

\end{example}
The example with singular covariance matrix is essentially the only  example for conics.

\begin{theorem} \label{thm:conic_singular}
	Let $C \subset \PP^2_\QQ$ be a smooth plane conic for which the associated covariance matrix in Theorem \ref{thm:conics} is singular. Then, up to permuting coordinates, the conic has the equation $x_1x_2 = cx_3^2$ for some $c \in \QQ$.
\end{theorem}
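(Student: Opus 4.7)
The plan is to reduce the statement to a finite combinatorial case analysis on the matrix $M=(c_{i,j})_{1\leq i,j\leq 3}$. First I would observe that the covariance matrix in Theorem \ref{thm:conics} equals $D^{-1/2} M D^{-1/2}$, where $D=\mathrm{diag}(c_{1,1},c_{2,2},c_{3,3})$ is positive definite because each $c_{i,i}\geq 1$. Hence the covariance matrix is singular if and only if $M$ itself is singular, so the problem becomes purely geometric.

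Next I would constrain the entries of $M$. Since $C$ is a smooth conic, the effective divisor $D_i:=\{x_i=0\}|_C$ has degree $2$ and so decomposes into at most two irreducible components; thus $c_{i,i}\in\{1,2\}$. For $i\neq j$ the lines $\{x_i=0\}$ and $\{x_j=0\}$ meet in the single coordinate vertex $P_{ij}\in\PP^2(\QQ)$, so $D_i$ and $D_j$ can share at most this one point as a common irreducible component, giving $c_{i,j}\in\{0,1\}$ with $c_{i,j}=1$ iff $P_{ij}\in C$. From this one deduces two consistency facts: if $P_{ij}\in C$ and $P_{ik}\in C$ with $\{i,j,k\}=\{1,2,3\}$, then $D_i=P_{ij}+P_{ik}$ has two distinct rational components and so $c_{i,i}=2$; conversely, if $P_{ij}\in C$ but $c_{i,i}=1$, then $D_i=2P_{ij}$, i.e.\ the line $\{x_i=0\}$ is tangent to $C$ at $P_{ij}$.

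Now I would split into cases according to the number $N\in\{0,1,2,3\}$ of coordinate vertices lying on $C$. When $N=0$, $M$ is diagonal with positive entries, hence invertible. When $N=1$, after a permutation one may take $P_{12}\in C$; then $M$ is block-diagonal with blocks $\bigl(\begin{smallmatrix} c_{1,1} & 1\\ 1 & c_{2,2}\end{smallmatrix}\bigr)$ and $(c_{3,3})$, and singularity forces $c_{1,1}=c_{2,2}=1$, meaning both $\{x_1=0\}$ and $\{x_2=0\}$ would be tangent to $C$ at $P_{12}$, contradicting the uniqueness of the tangent to a smooth conic. When $N=3$, one has $c_{i,i}=2$ and $c_{i,j}=1$ for $i\neq j$, and a direct computation gives $\det M=4\neq 0$.

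The decisive case is $N=2$. After permuting coordinates I may assume $P_{12},P_{13}\in C$ and $P_{23}\notin C$; then $c_{1,1}=2$, $c_{1,2}=c_{1,3}=1$, $c_{2,3}=0$, and expanding along the first row yields $\det M=2c_{2,2}c_{3,3}-c_{2,2}-c_{3,3}$, which vanishes iff $c_{2,2}=c_{3,3}=1$. In this subcase the lines $\{x_2=0\}$ and $\{x_3=0\}$ are tangent to $C$ at $P_{12}=[0{:}0{:}1]$ and $P_{13}=[0{:}1{:}0]$ respectively. Writing a defining quadric $q=\sum_{i\leq j} a_{i,j}x_ix_j$, the conditions $q(P_{12})=q(P_{13})=0$ eliminate the pure $x_3^2$ and $x_2^2$ terms, and the two tangency conditions (obtained by intersecting $q$ with $\{x_2=0\}$ and with $\{x_3=0\}$ and demanding a double root at the respective vertex) force the coefficients of $x_1x_3$ and $x_1x_2$ to vanish. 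Thus $q=\alpha x_1^2+\beta x_2x_3$ with $\alpha,\beta\in\QQ^{\times}$ (smoothness), which has the form $x_2x_3=cx_1^2$ and so, after relabeling, $x_1x_2=cx_3^2$, as claimed. The main obstacle is this last tangency step in the case $N=2$, but it reduces to elementary linear algebra on the coefficients of a quadric in $\PP^2$.
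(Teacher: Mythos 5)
Your proof is correct, but it takes a genuinely different route from the paper. The paper invokes the last part of Theorem~\ref{thm:main}, which says that the rank of the covariance matrix equals the rank of the subgroup $\mathcal{R}=\langle D_1,D_2,D_3\rangle\subset\Div C$. Singularity then directly produces a linear relation $a_1D_1'+a_2D_2'+a_3D_3'=0$ among the (non-reduced) hyperplane divisors $D_i'=b_iD_i$; minimality gives $\gcd(a_1,a_2,a_3)=1$, degree considerations give $a_1+a_2+a_3=0$, and interpreting the relation in the homogeneous coordinate ring of $C$ forces the defining quadric $Q$ to divide $x_1^{a_1}x_2^{a_2}-cx_3^{a_1+a_2}$, after which irreducibility and degree counting yield $a_1=a_2=1$. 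You instead bypass the rank statement entirely, bounding $c_{i,i}\in\{1,2\}$ and $c_{i,j}\in\{0,1\}$ by elementary geometry and running a case analysis on the number $N$ of coordinate vertices on $C$; the determinant computations rule out $N\in\{0,1,3\}$, and the $N=2$ subcase is pinned down by two tangency conditions on the quadric. What the paper's argument buys is brevity and conceptual transparency (the relation in $\Div C$ is what drives everything, and the method would adapt to higher-degree curves or more divisors); what yours buys is independence from the rank formula in Theorem~\ref{thm:main} and a completely explicit classification of which combinatorial configurations of coordinate vertices and tangencies actually arise, which is arguably more illuminating for a reader who wants to see exactly why the other cases fail.

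Two small points worth tightening in your write-up. First, in case $N=1$ you should note explicitly that the tangency point of $\{x_1=0\}$ must be $P_{12}$ (because it is the unique point of $C\cap\{x_1=0\}$ and $P_{12}$ lies there), before concluding that both lines are tangent at the same point; you do state this in $N=2$ but it is used implicitly in $N=1$ as well. Second, in case $N=2$ you should observe that $c_{2,2}=2$ and $c_{3,3}=2$, or one equal to $2$ and the other to $1$, give determinants $4$ and $1$ respectively, so that $(c_{2,2},c_{3,3})=(1,1)$ really is the only singular subcase — you assert this but the quick check is worth recording since it closes the case analysis.
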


\subsection{A geometric reformulation}

We now come to our most general results. To state them we require some 
notation. 

Let $X \subset \PP^d_\QQ$ be a quasi-projective variety over $\QQ$. Then the usual height on projective space induces a height function $H: X(\QQ) \to \RR_{>0}$. Let $\mathcal{X}$ be a choice of model for $X$ over $\ZZ$. Then the model allows us to define the set of integral point $\mathcal{X}(\ZZ)$, which is naturally a subset of $X(\QQ) = \mathcal{X}(\QQ)$.

We assume that $\mathcal{X}$ and the height $H$ satisfy the following properties. There exists a bound $A > 0$ and constants $M, \eta > 0$ such that for $Q \in \NN$ square-free with $\gcd(Q,\prod_{p \leq  A}p) = 1$ and for $\Upsilon \subset \mathcal{X}(\ZZ/Q\ZZ)$, we have 
\begin{equation} 
\label
{eqn:eff_equ}
\frac{\#\{ x \in \mathcal{X}(\ZZ): H(x) \leq B, x \bmod Q \in \Upsilon\}}
{\#\{ x \in \mathcal{X}(\ZZ): H(x) \leq B\}}
= \frac{\#\Upsilon}{\# \mathcal{X}(\ZZ/Q\ZZ)} + O(Q^{M} B^{-\eta})
\end{equation}
as $B \to \infty$. We call this condition \emph{effective equidistribution}, as it says that the solutions are equidistributed in congruence classes with an explicit error term. For our applications it does not matter how large $M$ is, since we will take $Q$ with $\log Q = o(\log B)$. 
This property holds for example for affine space, projective space \cite[Prop.~2.1]{LS19},
Birch range complete intersections (see \S\ref{sec:Birch}) and a general class of symmetric varieties (see \S\ref{sec:symmvar}).

Let $\mathcal{Z} \subset \mathcal{X}$ be a closed subscheme. For $x \in \mathcal{X}(\ZZ)\setminus \mathcal{Z}(\ZZ)$, we define
\begin{equation} \label{def:omega_Z}
\omega_\mathcal{Z}(x) = \#\{ p : x \bmod p \in \mathcal{Z}(\FF_p)\}.
\end{equation}
The condition $x \notin \mathcal{Z}(\ZZ)$ is easily seen to imply that the number of such primes is finite, hence this is well-defined. Note that $\omega_\mathcal{Z}(x) = \omega_{\mathcal{Z}_{\mathrm{red}}}(x)$ where $\mathcal{Z}_{\mathrm{red}}$ denotes the reduced subscheme underlying $\mathcal{Z}$. In particular, we may always assume that $\mathcal{Z}$ is reduced.

Taking $\mathcal{X} = \mathbb{A}^1_{\ZZ}$ and $\mathcal{Z}$ the origin, this recovers the classical number of prime divisors function $\omega$ used in \S\ref{sec:EK_intro}. Taking $\mathcal{Z}$ to be the coordinate hyperplane $x_i =0 $, we obtain the function $\omega(x_i)$ studied in \S\ref{sec:coordinates}. This is an important change of viewpoint, which makes clear that 
$\omega(x_i)$ 
actually 
has an intrinsic geometric definition. A natural question is how the geometry affects the distribution of $\omega_{\mathcal{Z}}$; as we shall soon see, the geometry determines everything and there is a natural geometric interpretation for all the results in \S\ref{sec:coordinates}.

In Proposition \ref{prop:Serre} we study the average order of this function for a flat closed subscheme $\mathcal{Z} \subset \mathcal{X}$. If $\mathcal{Z}$ is \emph{not} a divisor, then $\omega_{\mathcal{Z}}$ has constant average order. The more interesting case is where $\mathcal{Z} = \mathcal{D}$ is a divisor: here $\omega_{\mathcal{D}}$ has average order $c_{\mathcal{D}} \log \log B$, where $c_{\mathcal{D}}$ denotes the number of irreducible components of $\mathcal{D}$. In particular this behaves strikingly like the usual number of primes divisors of an integer. 

Our main theorem on integral points is an analogue of Erd\H{o}s--Kac's result for our function $\omega_{\mathcal{D}}$. However, given that there are many possible choices for $\mathcal{D}$ it is also natural to simultaneously consider finitely many $\mathcal{D}$, and study the correlations between these divisors. The result we obtain shows that there is in fact a multivariate normal distribution, whose covariance matrix is given explicitly in terms of the geometry of the divisors.

\begin{theorem} 
\label
{thm:main}
	Let $X \subset \PP^d_\QQ$ be a quasi-projective variety  with induced height function $H$  and  $\mathcal{X}$ a choice of model for $X$ over $\ZZ$ which satisfy \eqref{eqn:eff_equ}. Let $\Omega_B = \{x \in \mathcal{X}(\ZZ) : H(x) \leq B\}$  
	be
	equipped with the uniform probability measure. 

	Let $D_1,\dots, D_n \subset X$ be a collection of reduced divisors, $\mathcal{D}_i$ their closures in $\mathcal{X}$ and $\mathcal{D}$ the union of the $\mathcal{D}_i$. Let $c_{i,j}$ denote the number of common irreducible components of $D_i$ and $D_j$. Then as $B \to \infty$, the random vectors
	$$ \Omega_B \setminus \mathcal{D}(\ZZ)  \to \RR^n, \quad x \mapsto \left(\frac{\omega_{\mathcal{D}_1}(x) - c_{1,1}\log \log B}{\sqrt{c_{1,1}\log \log B}}, \dots, \frac{\omega_{\mathcal{D}_n}(x) - c_{n,n}\log \log B}{\sqrt{c_{n,n}\log \log B}}\right)$$
	converge in distribution to a central multivariate normal distribution with covariance matrix whose $(i,j)$-entry is $(c_{i,j}/ \sqrt{c_{i,i} c_{j,j}})$.
	
	Moreover, let $\mathcal{R} = \langle D_1,\dots, D_n\rangle \subset \Div X$ be the group of divisors of $X$ generated by the $D_i$ and let $r$ be the rank of $\mathcal{R}$. Then the covariance matrix has rank $r$.
\end{theorem}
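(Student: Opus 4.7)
The plan is to reduce the distributional statement to the general multivariate Erd\H{o}s--Kac theorem for sequences, Theorem \ref{thm:mainthmr}, applied to $\Omega_B \setminus \mathcal{D}(\ZZ)$. Observe that
$$\omega_{\mathcal{D}_i}(x) = \sum_p \mathbf{1}\bigl[x \bmod p \in \mathcal{D}_i(\FF_p)\bigr],$$
so each $\omega_{\mathcal{D}_i}$ is a strongly additive function built from local indicator functions indexed by primes. The effective equidistribution hypothesis \eqref{eqn:eff_equ} supplies the level-of-distribution input required by Theorem \ref{thm:mainthmr}, and the remaining analytic task is to verify the mean-value inputs describing the average density of the divisors $\mathcal{D}_i$ and of their pairwise intersections in reduction modulo primes.

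The key geometric computation, carried out by combining the Lang--Weil estimate with Chebotarev's density theorem (applied to the Galois action permuting geometric components), is that for any $\QQ$-irreducible divisor $E \subset X$ with closure $\mathcal{E} \subset \mathcal{X}$ one has
$$\sum_{p \leq Z} \frac{\#\mathcal{E}(\FF_p)}{\#\mathcal{X}(\FF_p)} = \log\log Z + O(1) \qquad (Z \to \infty).$$
Summing over the $c_{i,i}$ irreducible components of $D_i$ yields the mean $c_{i,i}$ for $\omega_{\mathcal{D}_i}$. For the covariance input, $D_i \cap D_j$ decomposes as the union of the $c_{i,j}$ common irreducible components of $D_i$ and $D_j$ (themselves divisors) together with a residual piece of codimension $\geq 2$ in $X$, which contributes $O(p^{-2})$ per prime and therefore $O(1)$ in total after dividing by $\#\mathcal{X}(\FF_p) \asymp p^{\dim X}$. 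This gives the average $c_{i,j}\log\log Z + O(1)$, and Theorem \ref{thm:mainthmr} then delivers the claimed multivariate normal convergence with covariance matrix $\bigl(c_{i,j}/\sqrt{c_{i,i}c_{j,j}}\bigr)_{i,j}$.

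For the rank assertion, let $E_1,\dots,E_m$ be the distinct irreducible components appearing in $D_1 \cup \cdots \cup D_n$, and write $D_i = \sum_{k=1}^m a_{i,k} E_k$ with $a_{i,k} \in \{0,1\}$ since each $D_i$ is reduced. Then $\mathcal{R} \subset \Div X$ is spanned by the rows of the $n \times m$ matrix $A = (a_{i,k})$, so $r = \rank A$. The unnormalised covariance matrix has entries $c_{i,j} = \sum_k a_{i,k} a_{j,k}$, hence equals $A A^{T}$, which has rank $\rank A = r$. Each $D_i$ is a nonzero reduced divisor, so $c_{i,i} \geq 1$, and conjugating by the invertible diagonal matrix $\operatorname{diag}(c_{i,i}^{-1/2})$ to pass to the normalised covariance matrix preserves the rank.

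The principal obstacle is packaging the mean-value and level-of-distribution hypotheses into a form that Theorem \ref{thm:mainthmr} can digest: the Lang--Weil error terms must be controlled uniformly across the relevant range of primes, and the passage from a general closed subscheme to the codimension-$1$ case (including the reduction $\omega_{\mathcal{Z}} = \omega_{\mathcal{Z}_{\mathrm{red}}}$ and the handling of lower-dimensional residues) requires care. Once these analytic inputs are in place, the identification of the covariance matrix and of its rank is essentially linear algebra.
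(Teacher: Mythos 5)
Your proposal follows essentially the same route as the paper: reduce to the multivariate Erd\H{o}s--Kac theorem for sequences (the paper applies the streamlined Theorem \ref{thm:mainthmrqw}, which is itself deduced from Theorem \ref{thm:mainthmr}), verify the density inputs via Lang--Weil together with the prime ideal theorem for the number fields attached to irreducible components (Proposition \ref{prop:Serre} and Corollary \ref{cor:D_i}), supply the level-of-distribution condition from \eqref{eqn:eff_equ}, and obtain the rank by recognising the unnormalised covariance matrix as the Gram matrix $AA^{T}$ of the incidence matrix (equivalently, the Gram matrix with respect to the paper's inner product on $\Div X$). The points you flag as requiring care --- the $\mathcal{Z}_{\mathrm{red}}$ reduction, control of lower-dimensional residues, and packaging into the hypotheses of the Erd\H{o}s--Kac theorem --- are indeed the routine verifications the paper carries out (including Lemma \ref{lem:points_in_D} showing $\mathcal{D}(\ZZ)$ is negligible, and the bound $m_i(x) \ll H(x)^{c}$ giving the truncation function $\mathcal{F}$), so the outline is sound.
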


Theorem \ref{thm:main} gives a much more general setting than the results mentioned earlier in the introduction; it allows one to also obtain results where the $x_i$ are replaced by arbitrary polynomials. For example, we obtain the following immediate corollary of Theorem \ref{thm:main}.

\begin{corollary} \label{cor:polynomials}
	Let $f_1, \dots, f_n \in \ZZ[x_1,\dots,x_d]$ and 
	$c_{i,j}$ denote the number of irreducible primitive
	non-constant polynomials $f$ with $f \mid f_i$ and $f \mid f_j$.
	Let	
	$\Omega_B = \{\x \in \ZZ^d : H(\x) \leq B, f_1(\x)\cdots f_n(\x) \neq 0\}$ 
	be
	equipped with the uniform probability measure. 	
	As $B \to \infty$, the random vectors
	$$ \Omega_B \to \RR^n, \quad \x \mapsto \left(\frac{\omega(f_i(\x)) - c_{i,i}\log \log B}{\sqrt{c_{i,i}\log \log B}}\right)_{i=1,\dots,n}$$
	converge in distribution to a central multivariate normal distribution with covariance matrix whose $(i,j)$-entry is $(c_{i,j}/ \sqrt{c_{i,i} c_{j,j}})$.
\end{corollary}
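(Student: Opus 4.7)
The plan is to derive this as a direct specialisation of Theorem \ref{thm:main}, taking $\mathcal{X} = \mathbb{A}^d_{\ZZ}$, viewed as a quasi-projective variety via the standard open immersion $\mathbb{A}^d \hookrightarrow \PP^d$, $(x_1,\dots,x_d) \mapsto (x_1:\dots:x_d:1)$. The height induced from $\PP^d$ agrees with $\max_i|x_i|$ outside a finite set, so the count of points in $\Omega_B$ is unchanged up to $O(1)$. I would first check that this $\mathcal{X}$ satisfies the effective equidistribution hypothesis \eqref{eqn:eff_equ}: for a square-free modulus $Q$ and any $\Upsilon \subset (\ZZ/Q\ZZ)^d$, elementary lattice point counting gives
\[
\#\{\x \in \ZZ^d : H(\x) \leq B, \x \bmod Q \in \Upsilon\} = \frac{\#\Upsilon}{Q^d}(2B+1)^d + O\!\left(\#\Upsilon\cdot(B/Q)^{d-1}\right),
\]
which yields \eqref{eqn:eff_equ} with, say, $M=d$ and any $\eta<1$ (this is the affine analogue of \cite[Prop.~2.1]{LS19}).

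Next I would choose the divisors. Let $\mathcal{D}_i \subset \mathbb{A}^d_{\ZZ}$ be the reduced closed subscheme cut out by the radical of $f_i$, and $D_i$ its generic fibre in $\mathbb{A}^d_\QQ$. Because $\omega_{\mathcal{Z}}$ depends only on $\mathcal{Z}_{\mathrm{red}}$ (as noted after \eqref{def:omega_Z}), we may compute
\[
\omega_{\mathcal{D}_i}(\x) = \#\{p : \x \bmod p \in \mathcal{D}_i(\FF_p)\} = \#\{p : p \mid f_i(\x)\} = \omega(f_i(\x))
\]
for any $\x$ with $f_i(\x)\neq 0$, so the condition $\x \notin \mathcal{D}(\ZZ)$ of Theorem \ref{thm:main} is exactly $f_1(\x)\cdots f_n(\x) \neq 0$. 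The irreducible components of $D_i$ in $\mathbb{A}^d_\QQ$ are in bijection with the distinct irreducible factors of $f_i$ in $\QQ[x_1,\dots,x_d]$, and by Gauss's lemma these correspond to the primitive non-constant irreducible integer factors of $f_i$ (unique up to sign). Consequently, the geometric quantity $c_{i,j}$ of Theorem \ref{thm:main} (the number of common irreducible components of $D_i$ and $D_j$) agrees with the polynomial-theoretic $c_{i,j}$ of the corollary.

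With these identifications the corollary is immediate from Theorem \ref{thm:main}. The only minor subtlety is to align the two definitions of $\Omega_B$: Theorem \ref{thm:main} starts from all integer points of height $\leq B$ and then removes $\mathcal{D}(\ZZ)$, while the corollary builds the non-vanishing condition $f_1 \cdots f_n \neq 0$ into the probability space from the outset, and these are literally the same set. No genuine obstacle arises here beyond the translation between the scheme-theoretic language of Theorem \ref{thm:main} and the factorisation language of the corollary, carried out in the previous paragraph.
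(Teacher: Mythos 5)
Your approach is the intended one: specialise Theorem~\ref{thm:main} to $\mathcal{X}=\mathbb{A}^d_{\ZZ}\subset\PP^d_{\ZZ}$, take $D_i$ to be the reduced divisor cut out by $f_i$ on the generic fibre, verify \eqref{eqn:eff_equ} by elementary lattice counting, and identify irreducible components of $D_i$ with the primitive irreducible factors of $f_i$ via Gauss's lemma. The paper itself offers no separate proof of Corollary~\ref{cor:polynomials} (it is declared an immediate consequence of Theorem~\ref{thm:main}), and the effective equidistribution of $\mathbb{A}^d$ is asserted in the paragraph following \eqref{eqn:eff_equ}, so your self-contained verification is welcome.

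There is one small imprecision worth tightening, concerning the content of $f_i$. Write $f_i = c_i\,g_{i,1}^{a_1}\cdots g_{i,m}^{a_m}$ with $c_i\in\ZZ$ and $g_{i,j}$ distinct primitive irreducible. Theorem~\ref{thm:main} requires $\mathcal{D}_i$ to be the \emph{closure in $\mathcal{X}$ of the reduced divisor $D_i$ on the generic fibre}, which is the subscheme of $\mathbb{A}^d_{\ZZ}$ cut out by $g_{i,1}\cdots g_{i,m}$, \emph{not} by the full radical of $(f_i)$ in $\ZZ[x_1,\dots,x_d]$ (which would additionally contain the vertical fibres over primes dividing $c_i$, and hence would not be the closure of a divisor on $X$). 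With the correct $\mathcal{D}_i$ one has
\[
\omega_{\mathcal{D}_i}(\x)=\#\{p : p\mid g_{i,1}(\x)\cdots g_{i,m}(\x)\},
\qquad
\omega(f_i(\x))=\#\{p : p\mid c_i \text{ or } p\mid g_{i,1}(\x)\cdots g_{i,m}(\x)\},
\]
so the two quantities can differ by up to $\omega(c_i)$ when $f_i$ is not primitive. This is a uniformly bounded discrepancy, so after dividing by $\sqrt{c_{i,i}\log\log B}$ it tends to zero and Slutsky's theorem (invoked elsewhere in the paper) carries the conclusion of Theorem~\ref{thm:main} over to the statement about $\omega(f_i(\x))$. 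Your alternative reading, taking $\mathcal{D}_i$ to be cut out by the full radical of $(f_i)$, makes the identity $\omega_{\mathcal{D}_i}(\x)=\omega(f_i(\x))$ literally true but then $\mathcal{D}_i$ is no longer flat over $\ZZ$ and does not satisfy the hypothesis ``$\mathcal{D}_i$ their closures in $\mathcal{X}$'' of Theorem~\ref{thm:main}. Either route works once this point is acknowledged; the cleanest fix is to use the closure and one application of Slutsky.

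Two further points, both minor. First, the corollary implicitly needs each $f_i$ to be non-constant (else $c_{i,i}=0$ and the normalisation degenerates); you tacitly assume this, as does the paper. Second, the count $c_{i,j}$ must identify $f$ and $-f$, which you correctly flag as ``unique up to sign'' so that it matches the geometric count of irreducible components.
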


Corollary \ref{cor:polynomials} generalises numerous special cases already known in the literature. The case $n=d=1$ and $f_1$ is irreducible is due to Halberstam \cite[Thm.~3]{Hal56}. The case $n=2,d=1$ and $f_1(x) = x, f_2(x) = x+1$ is also due to Halberstam \cite[Thm.~1]{Hal56}
and
LeVeque \cite[\S4]{Lev49}.
The case $n=1$ and $f_1$ is a product of geometrically irreducible polynomials is due to Xiong \cite[Thm.~1]{Xio09}. The case $d=1$ and the $f_i$ pairwise coprime is due to Tanaka \cite{Tan55}. All these cases either concern a univariate normal distribution, or a multivariate distribution with identity covariance matrix. Our results give a unified proof of all these special cases, and apply in much greater generality.

\begin{remark}
	The covariance matrix in Theorem \ref{thm:main} equals the identity matrix
	if and only if each pair of distinct divisors $D_i$ and $D_j$ have
	no irreducible component in common.
\end{remark}

\begin{remark}
	Our assumption \eqref{eqn:eff_equ} implies that the map 
	$\mathcal{X}(\ZZ) \to \mathcal{X}(\FF_p)$ is surjective for all but finitely many primes $p$; this may be viewed as a weak form 
	of strong approximation.
	However \eqref{eqn:eff_equ} does not imply strong approximation, since  our condition 
	may fail at finitely many primes and we do need require any information
	modulo higher powers of $p$.
\end{remark}

\begin{remark}
	Our method shows that is is possible to replace $\mathcal{X}(\ZZ)$ in \eqref{eqn:eff_equ} by the assumption that there exists some subset $\Omega \subset \mathcal{X}(\ZZ)$ which satisfies  \eqref{eqn:eff_equ}. 
	In particular, one can also consider cases in which there are  accumulating subvarietes or thin subsets.
\end{remark}

\begin{remark} \label{rem:d-uple}
	Let us emphasise that Theorem \ref{thm:main} makes clear that it is
	really the geometric properties of the chosen divisors, 
	rather than the geometry of the
	underlying variety, which determines the covariance matrix.
	For example, let $X \subset \PP^n$ be as in Theorem \ref{thm:CI},
	with coordinates $x_i$.
	We apply the $d$-uple embedding $X \subset \PP^n \subset \PP^{N}$
	for some $d> 1$, where $N = \binom {n+d}d - 1$ and we take the coordinates
	$y_i$ on $\PP^{N}$. Then applying Theorem \ref{thm:main} to $X$
	with respect to coordinate hyperplanes $y_i = 0$, we obtain a covariance
	matrix which is no longer diagonal; indeed, this is exactly the same
	as applying Theorem \ref{thm:main} to the divisors 
	$x_0^{d_0} \cdots x_n^{d_n} = 0$, running over all monomials of degree
	$d$, whence it is easily seen that the covariance matrix is no
	longer diagonal.
\end{remark}

\subsection{Outline of the paper}

In \S \ref{sec:EK} we state our most general theorem (Theorem \ref{thm:mainthmr}), which is a multivariate version of the Erd\H{o}s--Kac theorem for integer sequences satisfying certain hypotheses, and may be viewed as a multidimensional version of Billingsley's work~\cite[\S 3]{MR0466055}. The statement  is very involved, in order to allow for the greatest flexibility for applications. To help the reader, we therefore state a simplified version first in Theorem \ref{thm:mainthmrqw}. This section is dedicated to the proofs of Theorems \ref{thm:mainthmrqw} and \ref{thm:mainthmr}, and is the technical heart of the paper.

In \S \ref{sec:application} we prove Theorem \ref{thm:main} using Theorem \ref{thm:mainthmrqw}. The final \S \ref{sec:examples} concerns various example applications of Theorem \ref{thm:main} to proving the remaining results stated in the introduction. We finish with an example of a cubic surface to which our method does not apply, but for which 
we expect an analogue of our results to hold.

\subsection{Notation and conventions} \label{sec:conventions}

\subsubsection*{Number theory}
We say that a function $g: \NN^n \to \CC$ is \emph{multiplicative} 
 if
for all $\b a , \b b \in \mathbb N^n$ we have
\beq
{eq:multip000}{
g(a_1 b_1, \ldots , a_n b_n)
=
g(\b a)g(\b b), \quad
\mbox{if } \gcd(a_1 a_2 \cdots a_n, b_1 b_2 \cdots b_n)=1
.}
For a prime $p$, we denote by $\nu_p$ the $p$-adic valuation.  

\subsubsection*{Algebraic geometry}

Let $X$ be a variety over $\Q$. A \emph{model} of $X$ over $\Z$ is a finite
type scheme $\mathcal{X} \to \Spec \Z$ together with a choice of isomorphism $X \cong \mathcal{X}_{\Q}$.

\subsubsection*{Probability theory}

\begin{definition} 
\label{def:multinormal}
A random vector $(X_1,\dots,X_n): \Omega \to \R^n$ has a \emph{multivariate normal distribution}
if for every $\b t \in \R^n$
the random variable $\sum_{i=1}^n
t_i X_i$ has a univariate normal distribution.
\end{definition}

Note that for some $\b t$ the random variable  $\sum_{i=1}^n t_i X_i$ may follow a Dirac delta distribution; by convention one views this as a univariate normal distribution with variance $0$. In this case, the associated probability measure will be supported on some affine subspace of $\RR^n$.

A multivariate normal distribution is uniquely determined by its mean vector  $\boldsymbol{\mu}$ and its covariance matrix $\boldsymbol\Sigma$, whose $(i,j)$-entry is $\Cov[X_i , X_j]$. We denote by $\c N(\boldsymbol{\mu}, \boldsymbol\Sigma)$ the associated probability measure on $\RR^n$. A \emph{central} multivariate normal distribution is one with zero mean vector. A \emph{standard} multivariate normal distribution is one with zero mean vector and covariance matrix given by the identity matrix.

We use the notation $\Rightarrow$ to denote convergence in distribution of a sequence of random variables, i.e.~if the corresponding sequence of probability measures convergences weakly.

\subsection{Acknowledgements}
We thank
Carlo Pagano and Zeev Rudnick for helpful comments and suggestions.
The first-named author is supported by the Austrian Science Fund (FWF), projects F-5512 and Y-901.
The second-named author is supported by EPSRC grant EP/R021422/2.
The first and third-named authors acknowledge the 
support of the Max Planck Institute for Mathematics,
where a large part
of this work was carried out. We wish to thank the anonymous referee for helpful remarks that helped to improve various parts of the paper.
 
\section{A multivariate Erd\H{o}s--Kac theorem} \label{sec:EK}
In this section we provide 
a multidimensional generalisation of the Erd\H os--Kac theorem  
for general integer sequences. 
Our main result (Theorem~\ref{thm:mainthmr}),
     proves 
 that   multiple 
additive functions 
  evaluated 
   at 
integer 
sequences
 defined on an arbitrary set
and
  well-distributed in      
arithmetic progressions of very 
small moduli obey
  a multivariate normal distribution. We first give 
a simplified version (Theorem \ref{thm:mainthmrqw}) which is sufficient for many applications, to help ease the reader into the more general technical statement. 

\subsection{Simplified version of the main theorem}
Let $\Omega$ be an infinite set
and assume that  we are given a function 
$h:\Omega\to \R_{\geq 0}$
with 
 \beq
{def:northcot2qwe}
{
N(B) \text{ finite for all } B\geq 0, 
\quad \mbox{where } 
N(B):=\#\{a\in \Omega: h(a)\leq B\}
.}
Note that as $\Omega$ is infinite we have $N(B) \to \infty$. Moreover \eqref{def:northcot2qwe} implies that $\Omega$ is countable.
For each $B\geq 0$ we equip the set $\Omega$ with the structure of a probability space using the discrete $\sigma$-algebra and probability measure 
\[\b P_B\l[S\r]:=\frac
{\#\{a\in S    : h(a) \leq B\} }
{\#\{a\in \Omega : h(a) \leq B\}}, 
\quad S \subseteq \Omega
.\] 
Note that this measure is supported on the finite set $\#\{a\in \Omega : h(a) \leq B\}$, where it induces the uniform measure.
Next, we assume that 
we are given
$n\in \N$ and a function
\beq
{def:omomqwe}
{
m:\Omega
\to \N^n,
a \in \Omega\mapsto
(m_1(a),\ldots,m_n(a))
.}
We are interested in studying the distribution of the vector 
\[
\l(
\omega(m_1(a) ),
\ldots,
\omega(m_n(a) )
\r)
.\]
As with the classical Erd\H os--Kac theorem,
we need to normalise
by suitable factors first. We
have to assume some kind of 
regularity among the values of $m_i(a)$, namely, 
that there exists $A\in \R$
such that 
the following limit exists for all $\b d \in \N^n$
satisfying
$p\mid d_1\cdots d_n \Rightarrow p>A$, 
\beq
{def:gdensityqwe}
{ \lim_{B\to+\infty}
\frac{\#\l\{
a\in \Omega: h(a) \leq B,
d_i \mid m_i (a)  \, \forall  \,1 \leq i \leq n
\r\}}{
N(B)}
 =:g(\b d).}
The reason for  assuming~\eqref{def:gdensityqwe}
only
for moduli without small prime factors is that in certain situations it is convenient to ignore 
small `bad' primes.
We furthermore assume that 
 \beq
{eq:::defmultqwe}
{  g 
\text{ is multiplicative in the sense of }
\eqref{eq:multip000} }
and   
  extend $g$ to $\N^n$
by setting it equal to $0$ for $\b d $ 
such that $d_1 \cdots d_n$ has a prime factor $p\leq A$.
For any $1\leq i, j  \leq n$ 
we let 
\[
g_i(d):=g(1,\dots,1,\underset{\underset{i}{\uparrow}}{d},1,\dots,1)
\text{ and } 
g_{i,j}(d):=g(1, \dots, 1, \underset{\underset{i}{\uparrow}}{d},1,\dots,1, \underset{\underset{j}{\uparrow}}{d},1,\dots,1 )
.\] 
We now assume that 
for every $1\leq i \leq n $
we have 
 \beq
{eq:asumptnqwe}
{
\sum_{p >T } 
g_i(p)^2=
O\l(\frac{1}{\log T}\r)
\text{ and } 
 \sum_{p\leq T}
g_i(p)= 
c_i \log \log T +c'_i
+
O\l(\frac{1}{\log T}\r), }
for some $c_i  >0, c'_i \in \R $.  
This assumption is highly typical and usually
met in   
 sieve theory problems
as it corresponds to a sieve of `dimension' $c_i$.
In light of~\eqref{def:gdensityqwe}
the sum $\sum_{p\leq m_i(a)} g_i(p) $
should be thought of as approximating the
expected value
of $ \omega(m_i(a))$ as one samples over suitably 
many $a\in \Omega$.

The main arithmetic input in our theorem 
is   a statement regarding
the speed of
convergence in~\eqref{def:gdensityqwe}. Namely,
we 
 define $\c R
(\b d, B)$ for each $\b{d} \in \N^n$ 
and $B\geq 1$
via \begin{equation} 
\label
{def:levelofdistribvcfdgtqwe}
\c R(\b{d},B)
:=
\#\l\{a\in \Omega: h(a) \leq B, 
 d_i \mid m_i(a) \, \forall  1\leq i \leq n
 \r\}-
g(\b d)
N(B).
\end{equation} 
We demand that  $\c R(\b d , B)$ is asymptotically
smaller than $N(B)$ for most $\b d $ that are smaller than the 
`typical size' of the $m_i(a)$. To make 
this notion
precise, we first    
  call $\c F(B)$ the typical size of $
\max_{1\leq i \leq n }  m_i(a)
$, namely we assume 
there
exists a function 
$\c F:\R_{\geq 1}\to \R$
 with 
 \beq
{def:northcot2hgf8sqwe}
{
\hspace{-0,3cm}
 \lim_{B\to 
\infty}
\frac{1}{N(B)}
\#\l\{
a\in \Omega:
h(a) \leq B,    
 \max_{1\leq i \leq n } m_i(a) 
\leq
 \c F(B) 
\r\}=
1
.} It will turn out that the other assumptions in our set-up
ensure that 
$\lim_{B\to\infty} \c F(B)=+\infty$. Secondly, 
we assume that 
the sequences $m_i(a)$
are well-distributed in arithmetic progressions
whose modulus is
small 
compared to $\c F(B)$.
Namely, let 
\beq
 {def:northcot2hgf8sqwerwe5we}
{
\epsilon(B):=
\frac{
 \log \log \log  \c F(B) 
}{\sqrt{ \log \log  \c F(B)  } }
} and 
assume that for the same 
$A\in \R$
as above,
the following estimate is valid  
for all $\gamma>0$
 \beq
{eq:thebigassumptnqwe}
{ 
  \sum_{\substack{
\b d \in \N^n
\\
|\b d |\leq   \c F(B) ^{\epsilon(B)}
\\
p\mid d_1 \cdots d_n 
\Rightarrow p>A
}}
\mu(d_1)^2
\cdots
\mu(d_n)^2 
|\c R(\b d,B)|
 \ll_\gamma 
\frac{ N(B) }{(\log \log \c F(B))^\gamma}
} 
with an implied constant that is independent of $B$.
Assumption~\eqref{eq:thebigassumptnqwe}
is the     main arithmetic 
input  needed in our main results
(see Remark~\ref{rem:zerolevelofdistr}).
Define the function 
$
\b K
:\Omega\to \R^n$ via 
\beq
{defeqkqwe}
{
\b K(a):=
\l(
\frac{  \omega(m_1(a) )   - c_1 \log \log \c F(B)  }{\sqrt{c_1 \log \log \c F(B)}}
,\ldots,
\frac{ \omega(m_n(a) )   -  c_n \log \log \c F(B)  }{\sqrt{c_n \log \log \c F(B)}}
\r)
.}
 This is the promised normalisation. Our result is as follows. 
\begin{theorem} 
\label
{thm:mainthmrqw}
Let $n\in \N$ 
and 
assume that we are given 
 a set 
$\Omega$,
a real number $A$
and 
functions 
$ h,m, g, \c F$ 
such that~\eqref{def:northcot2qwe},
~\eqref{def:omomqwe},
~\eqref{def:gdensityqwe},
~\eqref{eq:::defmultqwe},
~\eqref{eq:asumptnqwe},
~\eqref{def:northcot2hgf8sqwe}
and~\eqref{eq:thebigassumptnqwe} 
hold.
Furthermore,
assume that for every 
$1\leq i, j \leq n $
the following limit exists,
\begin{equation} \label{eqn:gij}
\lim_{T\to+\infty}
\frac{
\sum_{p\leq T
}
 g_{i,j}(p) 
  }{
(\sum_{p\leq T} g_i(p))^{1/2}
(\sum_{p\leq T} g_j(p))^{1/2}
   }.
\end{equation}Then the random vectors
\beq
{eq:thelimitqwe}
{
(\Omega, \mathbf{P}_B) \to \RR^n, \quad a \mapsto \b K (a),
}
converge in distribution as $B \to \infty$ to a central multivariate normal distribution with covariance matrix 
$\boldsymbol{\Sigma}$
whose $(i,j)$-entry is the limit~
\eqref
{eqn:gij}.
 \end{theorem}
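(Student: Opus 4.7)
I would prove Theorem~\ref{thm:mainthmrqw} by the method of mixed moments. A central multivariate normal distribution with covariance matrix $\boldsymbol{\Sigma}$ is determined by its joint moments, which by Isserlis' theorem vanish in odd total degree and otherwise equal
\[
\sum_{\pi} \prod_{\{r,s\}\in\pi} \Sigma_{i_r, i_s},
\]
where one has listed the indices with multiplicity as $(i_1,\ldots,i_k)$ with index $i$ appearing $k_i$ times, and $\pi$ ranges over perfect matchings of $\{1,\ldots,k\}$. It therefore suffices to prove that for every tuple of non-negative integers $(k_1,\ldots,k_n)$ the mixed moment $\mathbf{E}_B[\prod_i K_i^{k_i}]$ converges to this Isserlis sum as $B\to\infty$, where $\mathbf{E}_B$ denotes expectation under $\mathbf{P}_B$.

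Set $k := k_1+\cdots+k_n$ and choose a truncation level $y = y(B)$ with $y\to\infty$, $y^k \leq \mathcal{F}(B)^{\epsilon(B)}$, and $\log\log y = \log\log\mathcal{F}(B) + O(1)$; for instance $y := \mathcal{F}(B)^{\epsilon(B)/(2k)}$. Replace $\omega(m_i(a))$ by the truncation $\omega_i^{\leq y}(a) := \#\{p\leq y : p\mid m_i(a)\}$; using \eqref{def:northcot2hgf8sqwe} to restrict to $a$ with $m_i(a)\leq \mathcal{F}(B)$ and \eqref{eq:asumptnqwe} to bound the expected number of primes in $(y,\mathcal{F}(B)]$, one shows that this perturbs each $K_i(a)$ by $o(1)$ in every $L^q$-norm of interest. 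Writing $\xi_{i,p}(a) := \mathbf{1}[p\mid m_i(a)] - g_i(p)$, expanding the truncated centred moment
\[
\mathbf{E}_B\!\left[\prod_{i=1}^n \Bigl(\omega_i^{\leq y}(a) - \sum_{p\leq y} g_i(p)\Bigr)^{\!k_i}\right]
\]
produces a sum over $k$-tuples $(p_1,\ldots,p_k)$ of primes $\leq y$ of $\mathbf{E}_B[\xi_{i_1,p_1}\cdots\xi_{i_k,p_k}]$. The hypothesis \eqref{eq:thebigassumptnqwe} applied with $|\mathbf{d}|\leq \mathcal{F}(B)^{\epsilon(B)}$ is precisely what allows one to replace these empirical expectations by the densities $g$, with an aggregate error absorbed into the normalisation; multiplicativity \eqref{eq:::defmultqwe} then factorises the main term over distinct primes.

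I would then classify the tuples by the partition $\pi$ of $\{1,\ldots,k\}$ induced by the coincidences $p_r=p_s$. Singleton blocks contribute $0$ thanks to the centring. A pair block $\{r,s\}$ yields the Euler factor $g_{i_r,i_s}(p)-g_{i_r}(p)g_{i_s}(p)$; since $\sum_p g_i(p)g_j(p)$ converges by \eqref{eq:asumptnqwe}, summing over $p\leq y$ gives $\sum_{p\leq y}g_{i_r,i_s}(p)+O(1)$, which by \eqref{eqn:gij} and \eqref{eq:asumptnqwe} equals $\Sigma_{i_r,i_s}\sqrt{c_{i_r}c_{i_s}}\log\log\mathcal{F}(B)\,(1+o(1))$, with $\Sigma_{i_r,i_s}$ the limit in \eqref{eqn:gij}. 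Blocks of size $r\geq 3$ contribute $O(1)$ per block after summing over $p$, because the relevant joint density is bounded by $\min_i g_i(p)=O(1/p)$ and $\sum_p 1/p^{r-1}$ converges; such blocks therefore become negligible once divided by $\prod_i(c_i\log\log\mathcal{F}(B))^{k_i/2}$. Only perfect matchings survive at leading order, and their total contribution is precisely the Isserlis sum for $\boldsymbol{\Sigma}$, finishing the proof. The main technical obstacle will be the bookkeeping: verifying that the choice of $y(B)$ simultaneously makes \eqref{eq:thebigassumptnqwe} applicable to every relevant $\mathbf{d}$, controlling the truncation error uniformly across all mixed moments up to order $k$, and absorbing the accumulated errors from the partition classification into the normalisation.
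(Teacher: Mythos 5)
Your approach is correct in outline but takes a genuinely different route from the paper's. The paper does not compute mixed moments directly: it first proves the more general Theorem~\ref{thm:mainthmr}, invoking the Cram\'er--Wold device to reduce to the one-dimensional projections $\sum_i t_i K_i(a)$; it then constructs an explicit probabilistic model $\sum_p Y_p$ from independent Bernoulli vectors $(X_{i,p})_i$ encoding the densities $g_S(p)$, compares the truncated centred moments of the projection against those of the model (Lemmas~\ref{lem:31billmoza}--\ref{lem:finalzeta}), applies the Lindeberg CLT to the model (Lemma~\ref{lem:to lima twn fakwn12}), controls the model's moment growth (Lemma~\ref{Socrates Drank the Conium - Taste of Conium}), and passes back via Billingsley's comparison-of-moments lemma (Lemma~\ref{lem:billbill}). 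Because the general theorem normalises by $\mathcal{V}_i(m_i(a))$ rather than by $c_i\log\log\mathcal{F}(B)$, the deduction of Theorem~\ref{thm:mainthmrqw} further requires a Fundamental-Lemma sieve bound (Lemma~\ref{lem:undertheasu}) to show that $m_i(a)$ is rarely small. Your plan instead attacks the mixed moments of the truncated centred sums head-on and classifies the $k$-tuples of primes by their coincidence partition, with Isserlis supplying the target moments. This combinatorial route sidesteps Cram\'er--Wold, the auxiliary random variables, Billingsley's lemma, and the sieve step, since you fix the normalisation $c_i\log\log\mathcal{F}(B)$ from the start; the price is heavier partition bookkeeping, which the paper localises inside a clean CLT for independent variables.

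Two details in your sketch need repair, though neither is fatal. First, the assertion that the truncation perturbs each $K_i(a)$ by $o(1)$ \emph{in every $L^q$-norm} is not warranted: on $\{a : m_i(a) > \mathcal{F}(B)\}$ you have no control on $\omega(m_i(a))$, so you only get a uniform $o(1)$ bound on the complementary good set (where $m_i(a)\leq\mathcal{F}(B)$ forces $\#\{p>y : p\mid m_i(a)\}\ll \log\mathcal{F}(B)/\log y$) together with $\mathbf{P}_B(\text{bad set})\to 0$, which is convergence in probability and suffices for the distributional conclusion via Slutsky, but is not $L^q$ control. Second, and more substantively, your treatment of blocks of size $r\geq 3$ is incorrect in the abstract setting: nothing in \eqref{eq:asumptnqwe} gives $g_i(p)=O(1/p)$, and even when this holds the centred block expectation $\mathbf{E}[\prod_{r\in B}\xi_{i_r,p}]$ is only $O(\min_r g_{i_r}(p))=O(1/p)$ — the leading term $g_{\{i_r : r\in B\}}(p)$ does not cancel — so $\sum_{p\leq y}$ of a size-$r$ block is $O(\log\log y)$, not $O(1)$. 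The conclusion nevertheless survives for a simple counting reason: a partition into $\ell$ blocks, each of size $\geq 2$ but not all of size exactly $2$, satisfies $\ell<k/2$, so its contribution $O\big((\log\log y)^\ell\big)$ is $o\big((\log\log y)^{k/2}\big)$ after dividing by the normalisation. You should replace the claimed $O(1)$-per-block estimate by the correct $O(\log\log y)$-per-block bound and conclude by counting blocks rather than by any convergence of $\sum_p p^{-(r-1)}$.
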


There are three 
noteworthy 
aspects 
in 
Theorem~\ref{thm:mainthmrqw}.
Firstly, the simplest case with 
$n=1$
 applies to functions defined on a \textit{general}
set $\Omega$, hence it 
recovers normal
distribution results
  related to 
irreducible polynomials \cite[Thm.~3]{Hal56},
values of irreducible polynomials at primes \cite{MR73627}
and 
  entries of matrices \cite{delbz18}. It also applies to new situations, such as the coordinates of integer zeros of affine algebraic varieties 
 that do not necessarily have a group structure.

Secondly,
Theorem~\ref{thm:mainthmrqw}
studies
\textit{multidimensional}
normal laws for arithmetic functions. The only related example that we could find 
in the literature is due to Halberstam~\cite[Thm.~1]{Hal56} and 
LeVeque \cite[\S4]{Lev49}
regarding $(\omega(m), \omega(m+1) )$
and its  generalisation
given by 
Tanaka~\cite{Tan55}
regarding 
$(\omega(f_1(m) ),\ldots, \omega(f_n(m)))$
for non-constant integer irreducible 
 polynomials $f_i$ that are relatively coprime. These 
results 
are 
 recovered by 
our theorem by taking $\Omega= \N , m_i(a)=f_i(a)$ for $1\leq i \leq n $,
and the covariance matrix is the
$n\times n $ 
identity matrix.

Thirdly, the covariance matrix is the identity if and
only if 
the  sequences
$\omega(m_i(a))$ and $\omega(m_j (a)) $  are `uncorrelated' for all $i\neq j $.
  Such a phenomenon is however not present 
in many situations (such as the prime factors of coordinates of affine algebraic varieties)
and one must therefore obtain a general Erd\H os--Kac law that would apply to situations with non-vanishing
correlations. This is the most important new aspect of  
Theorem~\ref{thm:mainthmrqw}, namely, that it covers 
multivariate normal distributions with 
\textit{arbitrary}  
covariance matrix.

\begin{remark}
\label
{rem:zerolevelofdistr}
Assumption~\eqref{eq:thebigassumptnqwe}
resembles  
  a 
  \textit{level of distribution} condition
in sieve theory.
In typical 
situations one takes
 $\c F(B) = N(B)^c    $ for some fixed $c>0$, where the  
size
 condition  on $\b d $
becomes 
$
|\b d | 
\leq    
\c F(B) ^{\epsilon(B)} 
=
  N(B)^{o(1)} 
$.
This is much lighter than the usually stricter assumption in classical
sieve theory problems, where
a positive exponent of level of distribution is required, i.e.~one requires the same error term but with the summation 
over $\b d $ with $|\b d| \leq N(B)^\alpha$ for some fixed $\alpha>0$.
Note that if there exist   $\eta>0$ and $M>0$  such that 
\[
\frac{\#\{
a\in \Omega :  h(a) \leq B,d_i \mid m_i(a) \, \forall 1\leq i \leq n
\}}{
\#\{
a\in \Omega: h(a) \leq B
\}
}
=g(\b d)
+O\l(
N(B)^{-\eta}
(\max_{1\leq i \leq n } d_i)^M
\r)
 \]
and if 
$\c F(B) = N(B)^c$, then~\eqref{eq:thebigassumptnqwe}
holds
due  to the estimate 
\[
\sum_{|\b d | \leq \c F(B)^{\epsilon(B)} }
|\c R(\b d ,B)|
\ll
N(B)^{1-\eta}
\sum_{|\b d | \leq \c F(B)^{\epsilon(B)} }
(\max_{1\leq i \leq n } d_i)^M
\ll  N(B)^{1-\eta}
\c F(B)^{\epsilon(B) (M+n) } .\] 
Since $\epsilon(B)=o(1)$, this is  $ \ll N(B)^{1-\eta/2}$, which, for every $\gamma>0 $ is 
$$ o\l(\frac{N(B)}{(\log \log N(B))^\gamma} \r )=O\l(\frac{N(B)}{(\log \log \c F(B))^\gamma} \r)   $$ owing to the equality $\c F(B)=N(B)^c$.
 \end{remark}

\subsection{The main theorem}
\label
{subs:mainthrem}  
We now state 
the main technical result in the present paper; it is a
general version 
of 
Theorem~\ref{thm:mainthmrqw}. 
Let $\Omega$ be an infinite set
and assume that for every $B\in \R_{\geq 1}$
we are given a function 
$\chi_B:\Omega\to \R_{\geq 0}$ 
such that 
\beq
{def:northcot2}
{B\geq 1 
\Rightarrow 
 \{a\in \Omega: \chi_B(a)>0 \} \text{ finite}
.}
In applications the function $\chi_B(x)$ will either denote  the characteristic function of  
elements $x$ having `height' bounded by $B$
or it will be a smooth `weight' function of the form $w(x/B)$.
We also demand that 
\beq
{def:northcot3}
{\lim_{B\to+\infty}
\sum_{a\in \Omega} \chi_B(a)
=+\infty
.}
For each $B\geq 0$ we equip the set $\Omega$ with the structure of a probability space using the discrete $\sigma$-algebra and probability measure 
\[\b P_B\l[S\r]:=\frac{\sum_{a\in S} \chi_B(a)}{\sum_{a\in \Omega} \chi_B(a)},
\quad S \subseteq \Omega
.\]
Assume that 
 $M:\R_{\geq 0}\to \R_{\geq 0}$
is any  function
satisfying 
\beq
{def:mmde}
{ 
\lim_{B\to+\infty}
\frac{
\sum_{\substack{  a\in \Omega
 }}\chi_B(a)  }
{M(B)}=1
.}
Next, we assume that 
we are given
$n\in \N$ and a function
\beq
{def:omom}
{
m:\Omega
\to \N^n,
a \in \Omega\mapsto
(m_1(a),\ldots,m_n(a))
.}
We will find  general  assumptions which 
 ensure that certain functions  
display Gaussian behaviour simultaneously for all $i$
when evaluated at $m_i(a)$.  
 We first need
the following function $g$, that contains information on the 
divisors of typical values of $m_i(a)$.   
\begin{definition}[The density function $g$]
\label
{def:g_i}
We 
assume that there exists $A\in \R$
such that 
the following limit exists for all $\b d \in \N^n$
with $p\mid d_1\cdots d_n \Rightarrow p>A$, 
\beq
{def:gdensity}
{
   \lim_{B\to+\infty}
\frac{1}{
 M(B)
}
\sum_{\substack{  a\in \Omega\\   d_i \mid m_i(a) \, \forall 1\leq i \leq n }} \chi_B(a).} 
We define $g:\{\b d \in \N^n: p\mid d_1 \cdots d_n \Rightarrow p>A\}
 \to \R$ as the value of this limit.
We extend $g$ to $\N^n$
by setting it equal to $0$ for $\b d $ 
such that $d_1 \cdots d_n$ has a prime factor $p\leq A$.
Furthermore,
we
assume that 
\beq
{eq:::defmult}
{  g 
\text{ is multiplicative in the sense of }
\eqref{eq:multip000}. } 
\end{definition} 
Let us introduce
the arithmetic functions 
whose values at $m_i(a)$ 
we shall study.
 These functions will be of the form $\sum_{p\mid m_i(a) } 
\theta_i(p)$, where the sum is taken over 
prime divisors $p$
and 
 $\theta_i (p)$ are bounded functions. 
These function clearly generalise $\omega$ as can be seen by taking
$A=0$ and
 $\theta_i(p)=1$ for all $p$.  To be precise, 
we 
assume that we are given
functions
$\theta_1,\ldots,\theta_n  
$ 
   defined on the primes,
taking values on $\R$  
and that 
there exists 
$\Theta\in \R$ with 
\beq
{assumption:thet}
{|\theta_i(p)| \leq
 \Theta
\text{ for all }
1\leq i \leq n 
\text{ and primes } p.} 
For any $S\subset \{1,\ldots, n\}$ 
and $b\in \N$
we define $g_S:\N\to \R$
via   
 \beq
{def:brandemb3}
{g_S(b):=g\l(
1 + (b-1)\mathds 1_{S}(1),\ldots, 
1 + (b-1)\mathds 1_{S}(i),\ldots, 
1 + (b-1)\mathds 1_{S}(n)
\r)
,}
i.e.~we put $b$ in position $i$ if $i \in S$ and we put $1$ otherwise.
We furthermore define  
\beq
{def:limitations}
{
\c M_i(T):=
 \sum_{p\leq T} \theta_i(p) g_i(p), \ \ (T\geq 0,  1\leq i \leq n ) .}
The function  $\c M_i(T)$ 
approximates the `mean'
of $ \sum_{p\mid m_i(a) } \theta_i(p)$ as one samples over suitably 
many $a\in \Omega$.
 In addition to
these means we shall also need to 
consider the analogous of 
`variances' 
$\c V_i(T)^2$,  
thus we let     
\beq
{eq:Trilogy Suite Op. 5 - Yngwie Malmsteen}
{
\c V_i(T):=
\Big( 
\sum_{p\leq  T}   \theta_i(p)^2  g_i(p)   \l(1-g_i(p)    \r)
\Big)
^{1/2}, \ \ (T\geq 0,  1\leq i \leq n ).
}
 We assume that for all $i=1,\ldots,n$ we have
 \beq
{eq:asumptn}
{
\lim_{T\to+\infty}
\c V_i(T)
 =+\infty
.}
Let us define the function 
$
\b K
:\Omega\to \R^n$ via 
\beq
{defeqk}
{
\b K(a):=
\l(
\frac{\l(\sum_{p\mid m_1(a)} \theta_1(p)  \r)-\c M_1(m_1(a))}{\c V_1(m_1(a))}
,\ldots,
\frac{\l(\sum_{p\mid m_n(a)} \theta_n(p)  \r)-\c M_n(m_n(a))}{\c V_n(m_n(a))}
\r)
.}
If $\mathcal{V}_i(m_i(a)) = 0$, then by convention we take the $i$th entry to be $1$ (note that our later assumptions will imply that for any $i$ the event $\mathcal{V}_i(m_i(a)) = 0$ has probability $0$) 

We will study
the 
behaviour of the functions $\sum_{p\mid m_i(a) } \theta_i(p)$ 
simultaneously for all $i$ and 
as $a$ ranges over $\Omega$. To
 make the notation easier in what follows we 
 normalised
 these functions 
by first 
centering around their `expected mean'  $\c M_i$ and then dividing by the `standard deviation' $\c V_i$.
We 
 define $\c R
(\b d, B)$ for each $\b{d} \in \N^n$ 
and $B\geq 0$
via \begin{equation} 
\label
{def:levelofdistribvcfdgt}
\c R(\b{d},B)
:=\l(
\sum_{\substack{  a\in \Omega\\  
d_i \mid m_i(a) \, \forall 1\leq i \leq n
 }}
\chi_B(a)
\r)
-
g(\b d)
M(B).
\end{equation}
Our result 
will hold if the size of 
$\c R(\b d ,B)$ 
is relatively
 small compared to 
$\c M_i(B)$ and $\c V_i(B)$
as one 
averages over small $\b d $. To make this
precise 
we need the following piece of 
notation. 
\begin{definition}
[Truncation pairs]
\label
{def:tranc}
We say that a pair of functions $(\c F,{\psi})$ with
$\c F:\R_{\geq 0}\to \R$ and
   $\psi:\R\to (0,1]$ is a \emph{truncation pair} if the following is satisfied.
First
\beq
{def:northcot2hgf8s}
{
 \lim_{B\to+\infty}
\frac{1}{\sum_{a\in \Omega} \chi_B(a)}
\sum_{\substack{a\in \Omega\\ m_i(a) \leq 
\c F(B)\, \forall i}}\chi_B(a)=
1
.}
Next
\beq
{eq:Flots du Danube}
{
\lim_{B\to+\infty}
{\c F(B)}^{\psi(B)} =+\infty
,}
\beq
{eq:Albioni - Oboe Concerto in D minor, op.9 no.2
1.  I  Allegro e non presto}
{
\lim_{B\to+\infty}
\frac{1  }{\psi(B)
\c V_i(\c F(B)^{\psi(B)})
}
=0
, \quad
\frac
{\c M_i(m_i(a))-\c M_i(\c F(B)^{\psi(B)}  )  }
{
\c V_i(\c F(B)^{\psi(B)})
}
\Rightarrow 0}
and 
\beq
{eq:assumptionalbinioni}
{
\frac
{\c V_i(m_i(a))}
{
\c V_i(\c F(B)^{\psi(B)})
}
\Rightarrow
1.}
Lastly, we 
assume  that 
for every $k_1,\ldots,k_n \in \N$ we have
\beq
{eq:thebigassumptn}
{
 \lim_{B\to+\infty}
 \frac{\prod_{i=1}^n
\l(
 \c M_i(\c F(B)^{\psi(B)})+\Theta ) 
 \r)^{k_i}}{M(B)\prod_{i=1}^n
\c V_i(\c F(B)^{\psi(B)})^{k_i}
}
 \hspace{-0,1 cm}
  \sum_{\substack{
\b d \in \N^n
\\
\!
\!
\eqref{eq:manzacafe}
}}
 \l(
g(\b d)
|\c R((1,\ldots,1),B)| 
+
|\c R(\b d,B)|
\r)
 =0,} 
where the summation is over $\b d \in \N^n$ with 
\beq
{eq:manzacafe}
{
\begin{cases}  
p\mid d_1\cdots d_n \Rightarrow A<p    \leq {\c F(B)}^{\psi(B)},   \\ 
\mu(d_i)^2=1, \quad \forall 1\leq i \leq n,  \\ 
\omega(d_i)\leq k_i, \quad \forall 1\leq i \leq n.
\end{cases}}

\end{definition}

 \begin{theorem} 
\label
{thm:mainthmr}
Assume that we are given 
$n \in \N$,
an infinite set 
$\Omega$,
 a function 
  $M:\R_{\geq 1}\to \R_{\geq 0}$ 
and for all $B\in \R_{\geq 1}$ a function 
$\chi_B:\Omega\to \R_{\geq 0}$ 
such that 
for any $B\in \R_{\geq 1}$
the assumptions
~\eqref{def:northcot2},
~\eqref{def:northcot3}
and~\eqref{def:mmde}
are satisfied. Assume further that 
we are given
a function 
$m:\Omega \to \N^n$
satisfying~\eqref{def:gdensity},
a real number $A$ and a 
 map 
$g:\N^n\to \R$
satisfying~\eqref{eq:::defmult} 
and functions $\theta_1,\ldots,\theta_n$
defined on the primes that take values in $\R$
that fulfil~\eqref{assumption:thet} and \eqref{eq:asumptn}.  
Assume that there exists a truncation pair
  $(\c F,\psi)$ satisfying~\eqref{def:northcot2hgf8s}-\eqref{eq:thebigassumptn}
and that 
for every $1\leq i, j \leq  n $ 
the following limit exists,
\beq
{A. VIVALDI:Filiae maestae Jerusalem RV 638}
{ 
\lim_{T\to+\infty}
\frac{
\sum_{p\leq T
}
\theta_i(p) \theta_j(p)
\l(g_{\{i,j\}}(p)- g_i(p) g_j(p)  \r)
  }{
\c V_i(T)
\c V_j(T)
   }
.}
Then the random vectors
\beq
{eq:thelimit}
{
(\Omega, \mathbf{P}_B) \to \RR^n, \quad a \mapsto \b K (a),
}
converge in distribution as $B \to \infty$ to a central multivariate normal distribution with covariance matrix 
$\boldsymbol{\Sigma}$
whose $(i,j)$-entry is the limit~
\eqref
{A. VIVALDI:Filiae maestae Jerusalem RV 638}.
 \end{theorem}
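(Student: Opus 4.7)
The plan is to apply the method of moments for random vectors. Since a central multivariate normal distribution $\mathcal N(\b 0,\boldsymbol\Sigma)$ is determined by its mixed moments, it suffices to show that for every tuple $(k_1,\dots,k_n)\in\mathbb N^n$ the joint moment $\E_B\!\left[\prod_{i=1}^n K_i(a)^{k_i}\right]$ converges as $B\to\infty$ to the corresponding Wick moment of $\mathcal N(\b 0,\boldsymbol\Sigma)$, which vanishes unless $\sum_i k_i$ is even and otherwise equals the sum over perfect pairings of the multi-index set $\{(i,j):1\le i\le n,\,1\le j\le k_i\}$ of the product of the covariances $\Sigma_{i(\alpha),i(\beta)}$.

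The first step is to remove the $a$-dependence of the normalisation. Put $y=y(B):=\mathcal F(B)^{\psi(B)}$. By \eqref{def:northcot2hgf8s} we may restrict to the event $\{m_i(a)\le \mathcal F(B)\text{ for all }i\}$; on this event, the contribution to $\sum_{p\mid m_i(a)}\theta_i(p)$ coming from primes $p>y$ is pointwise bounded by $\Theta\log\mathcal F(B)/\log y=\Theta/\psi(B)$, which is $o(\mathcal V_i(y))$ by the first half of \eqref{eq:Albioni - Oboe Concerto in D minor, op.9 no.2}. The second half of \eqref{eq:Albioni - Oboe Concerto in D minor, op.9 no.2} controls $\mathcal M_i(m_i(a))-\mathcal M_i(y)$, and \eqref{eq:assumptionalbinioni} controls the denominator ratio $\mathcal V_i(m_i(a))/\mathcal V_i(y)$. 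A multivariate Slutsky argument then reduces the problem to computing the limit moments of
\[
\tilde K_i(a) \,:=\, \frac{1}{\mathcal V_i(y)}\sum_{A<p\le y}\theta_i(p)\bigl(\mathbf 1_{p\mid m_i(a)}-g_i(p)\bigr);
\]
the primes $p\le A$ contribute only $O(1)$ pointwise and are absorbed after dividing by $\mathcal V_i(y)\to\infty$.

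To compute $\E_B\!\left[\prod_i\tilde K_i(a)^{k_i}\right]$, one expands the product to obtain a sum over prime tuples $\b p=(p_{i,j})\in (A,y]^{k_1+\cdots+k_n}$, weighted by $\prod_{i,j}\theta_i(p_{i,j})$, of the centred probabilities $\E_B\!\left[\prod_{i,j}(\mathbf 1_{p_{i,j}\mid m_i(a)}-g_i(p_{i,j}))\right]$. Opening the inner product reduces each summand to a quantity of the form $\mathbf P_B[d_i\mid m_i(a)\ \forall i]$ for a squarefree tuple $\b d$ with $\omega(d_i)\le k_i$, which by \eqref{def:levelofdistribvcfdgt} equals $g(\b d)+\mathcal R(\b d,B)/M(B)+o(g(\b d))$. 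The level-of-distribution assumption \eqref{eq:thebigassumptn} is tailored precisely so that, after bounding the weights crudely by the factor $\prod_i(\mathcal M_i(y)+\Theta)^{k_i}$ appearing in its statement and dividing by $\prod_i\mathcal V_i(y)^{k_i}$, the total $\mathcal R$-remainder is $o(1)$; what remains is a purely arithmetic main term built from $g$ and the $\theta_i$.

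By multiplicativity \eqref{eq:::defmult} this main term factorises over distinct primes and can be reorganised as a sum over set partitions of the multi-indices $\{(i,j)\}$: singletons cancel against the centring terms $-g_i(p_{i,j})$, so only partitions whose blocks all have size $\ge 2$ survive. A block of size $\ge 3$ contributes, after dividing by $\prod_i\mathcal V_i(y)^{k_i}$, a quantity of order $O\bigl(\mathcal V_i(y)^{-3}\sum_{p\le y}g_i(p)\bigr)=o(1)$ by \eqref{assumption:thet} and \eqref{eq:asumptn}; hence only perfect pairings survive. Each pair $\{(i,\cdot),(j,\cdot)\}$ contributes, after the normalisation, exactly the limit \eqref{A. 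VIVALDI:Filiae maestae Jerusalem RV 638}, identified as $\Sigma_{i,j}$, and summing over pairings recovers Wick's formula, completing the method-of-moments argument. The main obstacle will be the combinatorial bookkeeping of this expansion: matching coincidence patterns of $\b p$ with set partitions, tracking the sign cancellations produced by the centring terms $-g_i(p_{i,j})$, and verifying uniformly in $(k_1,\dots,k_n)$ that blocks of size $\ge 3$ are genuinely negligible. The crude upper bound $\prod_i(\mathcal M_i(y)+\Theta)^{k_i}$ built into \eqref{eq:thebigassumptn} is exactly the weight produced by naively opening the moment expansion, which is strong evidence that the proof is organised along these lines.
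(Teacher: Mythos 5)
Your plan is in principle sound, but it follows a genuinely different route from the paper's. You propose the classical direct method of moments: truncate as the paper does, expand $\E_B\left[\prod_i \tilde K_i^{k_i}\right]$ over prime tuples, invoke \eqref{eq:thebigassumptn} to discard the $\mathcal R$-remainder, then carry out a Wick-style combinatorial reduction (set partitions, singleton cancellation, negligibility of blocks of size $\ge 3$) to show the arithmetic mixed moments converge to the Gaussian mixed moments of $\mathcal N(\mathbf 0,\boldsymbol\Sigma)$. The paper instead linearises first: by the Cram\'er--Wold device it fixes $\b t\in\R^n$ and studies $\sum_i t_i K_i(a)$; it then introduces independent Bernoulli random vectors $\b X_p$ (Kolmogorov extension) modelling the indicators $\mathds 1_{p\mid m_i(a)}$, compares the arithmetic moments of the linear combination to the moments of the model $\sum_p Y_p$ (Lemmas~\ref{lem:31billmoza}--\ref{lem:finalzeta}), applies the Lindeberg CLT for triangular arrays to the model (Lemma~\ref{lem:to lima twn fakwn12}), proves an $|\E[\cdot^k]|\ll k!L^k$ growth bound on the model's moments (Lemma~\ref{Socrates Drank the Conium - Taste of Conium}), and then invokes Billingsley's moment-transfer lemma (Lemma~\ref{lem:billbill}) to pass the limit law from the model back to the arithmetic quantity. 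This sidesteps the Wick combinatorics entirely: one never needs to show moments converge to the Gaussian ones, only that the arithmetic and model moments agree asymptotically and that the model has a limit law. Your version buys conceptual directness and avoids introducing the probabilistic model, but you inherit the full combinatorial bookkeeping you already flag as the main obstacle (signs from the centring terms, uniformity in $\b k$, the estimate for higher-order blocks, and the degenerate case $\det\boldsymbol\Sigma=0$); the paper buys a cleaner argument at the cost of the Cram\'er--Wold reduction and the auxiliary model. Both routes are viable, but do note that your blanket estimate $O\bigl(\mathcal V_i(y)^{-3}\sum_{p\le y}g_i(p)\bigr)$ is only indicative: a careful version must handle blocks mixing several indices $i$, treat partitions with several large blocks, and be made uniform enough to sum over all non-pairing partitions.
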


\subsection{The proof of  Theorem~\ref{thm:mainthmr}} 
\label
{s:prfngen} 

To prove the result, we shall use the \emph{method of moments}. Specifically, the normal distribution has the special property that it is completely determined by its moments. Therefore it suffices to calculate the moments in our case. Our precise application  is slightly more delicate, and we instead approximate with a sum of random variables, and use a version of the method of moments due to Billingsey (Lemma \ref{lem:billbill}).

Our strategy consists of showing that 
for all $\b t \in \R^n$
the random variable on $\Omega$ given by 
\beq
{eq:kaloe?}
{
\sum_{i=1}^n t_i 
\Bigg(
\frac{\l(\sum_{p\mid m_i(a)} \theta_i(p)  \r)-\c M_i(m_i(a))}{\c V_i(m_i(a))}
\Bigg)
 }
converges in distribution as $B \to \infty$ to a suitable
linear combination of univariate normal distributions. 
To be able to     
use the level-of-distribution
property~\eqref{eq:thebigassumptn},
we show that we can
restrict the
size
of the primes $p\mid m_i(a)$ to the range $p\leq \c F(B)^{\psi(B)}$. 
\begin{lemma}
\label
{lem:albinionioboe7}
For all $\b t\in \R^n$ we have 
\[
\sum_{i=1}^n t_i 
\frac{\l(\sum_{p\mid m_i(a)} \theta_i(p)  \r)-\c M_i(m_i(a))}{\c V_i(m_i(a))}
  -
\sum_{i=1}^n t_i 
\frac{\l(\sum_{
   A<  
p\mid m_i(a), p\leq 
\c F(B)^{\psi(B)}
} \theta_i(p)  \r)-\c M_i(\c F(B)^{\psi(B)})}{\c V_i(\c F(B)^{\psi(B)})  }
\Rightarrow 0
.\]
\end{lemma}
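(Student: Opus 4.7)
The plan is to show that the displayed difference tends to $0$ in probability under $\mathbf{P}_B$, which is equivalent to convergence in distribution to the constant $0$. By linearity and the triangle inequality it suffices to treat each index $i$ in isolation. Fix such an $i$ and abbreviate $\mathcal{V}_i^\star := \mathcal{V}_i(\mathcal{F}(B)^{\psi(B)})$, $\mathcal{M}_i^\star := \mathcal{M}_i(\mathcal{F}(B)^{\psi(B)})$, $X_i(a) := \sum_{p\mid m_i(a)}\theta_i(p)$, and $X_i^\dagger(a) := \sum_{A<p\mid m_i(a),\,p\leq\mathcal{F}(B)^{\psi(B)}}\theta_i(p)$.

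First I would restrict to the event $E_B = \{m_j(a)\leq \mathcal{F}(B)\text{ for all }j\}$, whose $\mathbf{P}_B$-probability tends to $1$ by \eqref{def:northcot2hgf8s}, so all subsequent estimates need only be carried out on $E_B$. On $E_B$ the defect $X_i-X_i^\dagger$ is a sum of $\theta_i(p)$ over primes $p\mid m_i(a)$ with either $p\leq A$ (at most $\pi(A)$ terms) or $p>\mathcal{F}(B)^{\psi(B)}$ (at most $1/\psi(B)$ terms, since $\prod p\leq m_i(a)\leq\mathcal{F}(B)$), each bounded by $\Theta$ thanks to \eqref{assumption:thet}. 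Hence $|X_i - X_i^\dagger| = O_A(1) + O(1/\psi(B))$ on $E_B$.

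Next I would algebraically split the $i$th component of the difference as
\begin{equation*}
\frac{X_i - \mathcal{M}_i(m_i(a))}{\mathcal{V}_i(m_i(a))} - \frac{X_i^\dagger - \mathcal{M}_i^\star}{\mathcal{V}_i^\star} = \frac{\mathcal{V}_i^\star}{\mathcal{V}_i(m_i(a))}\cdot\frac{(X_i-X_i^\dagger)-(\mathcal{M}_i(m_i(a))-\mathcal{M}_i^\star)}{\mathcal{V}_i^\star} + \frac{X_i^\dagger-\mathcal{M}_i^\star}{\mathcal{V}_i^\star}\left(\frac{\mathcal{V}_i^\star}{\mathcal{V}_i(m_i(a))}-1\right).
\end{equation*}
For the first summand, the prefactor $\mathcal{V}_i^\star/\mathcal{V}_i(m_i(a))$ converges to $1$ in probability by \eqref{eq:assumptionalbinioni}; the quotient $(X_i-X_i^\dagger)/\mathcal{V}_i^\star$ is deterministically $o(1)$ on $E_B$ by combining the previous paragraph with the bounds $1/\mathcal{V}_i^\star\to 0$ from \eqref{eq:asumptn} and $1/(\psi(B)\mathcal{V}_i^\star)\to 0$ contained in Definition \ref{def:tranc}; and the mean comparison $(\mathcal{M}_i(m_i(a))-\mathcal{M}_i^\star)/\mathcal{V}_i^\star\Rightarrow 0$ is also part of that definition. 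Hence the first summand tends to $0$ in probability.

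For the second summand, the factor $\mathcal{V}_i^\star/\mathcal{V}_i(m_i(a))-1$ tends to $0$ in probability by \eqref{eq:assumptionalbinioni}, so by Slutsky's theorem it suffices to show tightness of $(X_i^\dagger-\mathcal{M}_i^\star)/\mathcal{V}_i^\star$ under $\mathbf{P}_B$, which is the main obstacle. I would establish this by a second-moment computation: expanding $\mathbf{E}_B[(X_i^\dagger-\mathcal{M}_i^\star)^2]$ into a double sum over primes $A<p,q\leq\mathcal{F}(B)^{\psi(B)}$, replacing each two-variable incidence $\mathbf{E}_B[\mathbf{1}_{pq\mid m_i(a)}]$ by $g_i(p)g_i(q)$ (off the diagonal) or $g_i(p)$ (on the diagonal) by means of \eqref{def:levelofdistribvcfdgt} and the multiplicativity \eqref{eq:::defmult}, and controlling the error using \eqref{eq:thebigassumptn} with $k_i=2$, yields $\Var_{\mathbf{P}_B}(X_i^\dagger) = (\mathcal{V}_i^\star)^2(1+o(1))$. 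Chebyshev's inequality then delivers the required tightness and completes the plan.
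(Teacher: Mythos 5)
Your proof is broadly correct and shares the paper's overall structure (restrict to the event $\{m_i(a)\leq\mathcal{F}(B)\}$, bound the tail $|X_i-X_i^\dagger|\ll_A 1+1/\psi(B)$ via the divisor-counting trick, then invoke \eqref{eq:Flots du Danube}--\eqref{eq:assumptionalbinioni}), but you take a more careful route in one place. The paper's proof simply states that, ``using \eqref{eq:assumptionalbinioni}, it suffices to prove'' the single reduced statement with $\mathcal{V}_i(\mathcal{F}(B)^{\psi(B)})$ in the denominator; however, passing from that statement back to the lemma requires controlling the cross term $\frac{X_i^\dagger-\mathcal{M}_i^\star}{\mathcal{V}_i^\star}\bigl(\frac{\mathcal{V}_i^\star}{\mathcal{V}_i(m_i(a))}-1\bigr)$, and Slutsky alone does not do this unless $(X_i^\dagger-\mathcal{M}_i^\star)/\mathcal{V}_i^\star$ is tight under $\mathbf{P}_B$. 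You are right to flag this and your explicit algebraic split makes the issue visible; your plan to establish tightness by a second-moment computation is sound, and this is precisely the kind of estimate the paper later obtains anyway (cf.\ Lemma \ref{lem:marcello_bach23} with $k_i=2$, all other $k_j$ minimal, together with \eqref{eq:thebigassumptn}). Two small caveats on your last step: (i) Chebyshev centred at $\mathcal{M}_i^\star$ rather than $\mathbb{E}_B[X_i^\dagger]$ requires you to also check that $\mathbb{E}_B[X_i^\dagger]-\mathcal{M}_i^\star=o(\mathcal{V}_i^\star)$, which follows from the same hypothesis with $k_i=1$; and (ii) when you say the off-diagonal incidences contribute $g_i(p)g_i(q)$, this is indeed exact for distinct primes by the multiplicativity \eqref{eq:::defmult}, so the variance calculation is cleaner than you suggest -- the only work is bounding the accumulated remainders $\mathcal{R}(\mathbf{d},B)$, and \eqref{eq:thebigassumptn} was built precisely for this. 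In short: same skeleton, but you supply a tightness argument that the paper leaves implicit, and your version is the more complete proof of the lemma as a standalone statement.
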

\begin{proof}
By Slutsky's theorem \cite[3.2.13]{Dur19}, if $X_m,Y_m$ are sequences of random variables with $X_m\Rightarrow 0$ and $Y_m\Rightarrow 0$
then  $X_m+ Y_m\Rightarrow 0$. Therefore, it suffices to prove that 
\[ 
 \frac{\l(\sum_{p\mid m_i(a)} \theta_i(p)  \r)-\c M_i(m_i(a))}{\c V_i(m_i(a))}
  -
\frac{\l(\sum_{p\mid m_i(a), 
 {  A<  }
p\leq 
\c F(B)^{\psi(B)}
} \theta_i(p)  \r)-\c M_i(\c F(B)^{\psi(B)})}{\c V_i(\c F(B)^{\psi(B)})  }
\Rightarrow 0
\] 
for every $i$.
Using~\eqref{eq:assumptionalbinioni}
we see that it is sufficient to prove that 
\begin{equation} \label{eqn:fuck}
 \hspace{-0,4cm}
\frac{
\l(\sum_{p\mid m_i(a)} \theta_i(p)  \r)
-\l(\sum_{p\mid m_i(a), 
 {  A<  }
p\leq   \c F(B)^{\psi(B)}   } \theta_i(p)  \r)
+\l(\c M_i(\c F(B)^{\psi(B)})
-
\c M_i(m_i(a))
\r)}
{\c V_i(\c F(B)^{\psi(B)})  } 
\Rightarrow 0
.
\end{equation} 
To prove this we
 use~\eqref{assumption:thet} 
to see that 
$\sum_{p\leq A} \theta_i(p) \ll 
A
\Theta $  and 
  \begin
{align*}
\l(\sum_{p\mid m_i(a)} \theta_i(p)  \r)
-\l(\sum_{p\mid m_i(a), p\leq   \c F(B)^{\psi(B)}   } \theta_i(p)  \r) \ll
&
\Theta
\#\l\{p> \c F(B)^{\psi(B)}   : p\mid m_i(a)\r\}
.\end{align*}
Let $\Omega_0 = \{ a \in \Omega : m_i(a) \leq \mathcal{F}(B)\}$;
note that $\lim_{B\to+\infty} \b P_B(\Omega_0)=1$  by \eqref{def:northcot2hgf8s}. Thus we can use the bound 
$\#\{p>z: p \mid m \} \leq (\log m)/(\log z)$ to see that the numerator in \eqref{eqn:fuck} is 
\begin{align*}
&\ll_{
{ A, }
\Theta}
{ 1+}
\frac{\log m_i(a) }{\log  (\c F(B)^{\psi(B)}   )  }+
\l(\c M_i(m_i(a))-\c M_i( \c F(B)^{\psi(B)}     ) \r)
 \\
& \ll 
\frac{1}{   \psi(B)} 
+
\l(\c M_i(m_i(a))-\c M_i( \c F(B)^{\psi(B)}     ) \r)
.\end{align*} 
The proof is concluded by using~\eqref
{eq:Albioni - Oboe Concerto in D minor, op.9 no.2
1.  I  Allegro e non presto}. 
\end{proof}
For a function $h:\Omega\to \mathbb C$ we define $\mathbb E_B$ as follows,
\[
\mathbb E_B[h]:= 
\frac{1}{\sum_{a\in \Omega} \chi_B(a)}
\sum_{\substack{a\in \Omega }}
\chi_B(a) h(a)
,\]
i.e.~the expected value of $h$ with respect to $\b P_B$.
We begin by reducing the evaluation of moments  
to averages over $\b d $
of
the error
term functions
 $\c R(\b d ,B)$
introduced in~\eqref{def:levelofdistribvcfdgt}. 
\begin{lemma}
\label
{lem:31billmoza}  
For all $B\geq 1$
and 
    $k_1,\ldots,k_n
\in \N$
the following estimate holds with an absolute implied constant,
\begin{align*}
&
\mathbb{E}_B\l[
\prod_{i=1}^n
 \l(\sum_{\substack{
{ A< }
p\leq {\c F(B)}^{\psi(B)}  
\\
p\mid m_i(a)
}}
 \theta_i(p) \r)^{k_i}\r]
-
\sum_{\substack{
1 \leq i \leq n \\ 1 \leq j \leq k_i \\
A< p_{i,j} \leq \mathcal{F}(B)^{\psi(B)} 
}}
g\l(P_1,\ldots, P_n\r)
\prod_{\substack{
1\leq u \leq n \\
1\leq v \leq k_u  
}}
\theta_u(p_{u,v}
)
\\
&\ll
\frac{
 \Theta^{k_1+\cdots+k_n}
}
{M(B)}  
\sum_{\substack{
\b d \in \N^n
\\
\!
\!
\eqref{eq:manzacafe}
}}
 \l(
g(\b d)
|\c R((1,\ldots,1),B)| 
+
|\c R(\b d,B)|
\r)
,
\end{align*}
 where $P_u$ is the radical of 
$\prod_{1\leq v \leq k_u}  
p_{u,v} $
and 
the sum over $p_{i,j}$
is   over
primes. 
\end{lemma}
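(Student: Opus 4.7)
The plan is to expand the product of powers on the left as an ordered sum over prime tuples, swap it with the expectation over $\Omega$, use the definitions of $\mathcal{R}(\mathbf{d},B)$ and $\mathcal{R}(\mathbf{1},B)$ to isolate a clean error, and finally reindex by the radical tuple.

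First I would write, for each $i$,
\[
\Bigl(\sum_{\substack{A<p\leq\mathcal{F}(B)^{\psi(B)}\\ p\mid m_i(a)}}\theta_i(p)\Bigr)^{k_i}
=\sum_{p_{i,1},\dots,p_{i,k_i}} \prod_{j=1}^{k_i}\theta_i(p_{i,j})\,\mathds{1}\bigl[p_{i,j}\mid m_i(a)\,\forall j\bigr],
\]
with each $p_{i,j}$ ranging over primes in $(A,\mathcal{F}(B)^{\psi(B)}]$. The key observation is that $p_{i,1},\dots,p_{i,k_i}\mid m_i(a)$ is equivalent to $P_i\mid m_i(a)$, where $P_i=\mathrm{rad}\bigl(\prod_j p_{i,j}\bigr)$. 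Multiplying the factors, taking $\mathbb{E}_B$, and swapping the finite sums gives the LHS of the lemma as
\[
\sum_{(p_{u,v})}\Bigl(\prod_{u,v}\theta_u(p_{u,v})\Bigr)\cdot \mathbb{E}_B\bigl[\mathds{1}[P_i\mid m_i(a)\,\forall i]\bigr].
\]

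Next I would compute this expectation using \eqref{def:levelofdistribvcfdgt}: setting $S:=\sum_{a\in\Omega}\chi_B(a)=M(B)+\mathcal{R}((1,\dots,1),B)$ (since $g(1,\dots,1)=1$), we get
\[
\mathbb{E}_B\bigl[\mathds{1}[P_i\mid m_i(a)\,\forall i]\bigr]=\frac{g(\mathbf{P})M(B)+\mathcal{R}(\mathbf{P},B)}{S},
\]
and a one-line rearrangement produces the clean identity
\[
\mathbb{E}_B\bigl[\mathds{1}[P_i\mid m_i(a)\,\forall i]\bigr]-g(\mathbf{P})=\frac{\mathcal{R}(\mathbf{P},B)-g(\mathbf{P})\,\mathcal{R}((1,\dots,1),B)}{S}.
\]
This is the heart of the lemma: it expresses the discrepancy between the observed probability and the ``main density'' $g(\mathbf{P})$ as a combination of precisely the two types of error terms appearing in the claim.

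Summing the identity against $\prod_{u,v}\theta_u(p_{u,v})$, applying the triangle inequality and the bound $|\theta_u(p)|\leq\Theta$ in \eqref{assumption:thet} produces the factor $\Theta^{k_1+\cdots+k_n}$ and reduces matters to bounding
\[
\frac{\Theta^{k_1+\cdots+k_n}}{S}\sum_{(p_{u,v})}\bigl(|\mathcal{R}(\mathbf{P},B)|+g(\mathbf{P})\,|\mathcal{R}((1,\dots,1),B)|\bigr).
\]
I would then reindex by $\mathbf{d}=(P_1,\dots,P_n)$: for each squarefree $\mathbf{d}$ with $\omega(d_i)\leq k_i$ and prime factors in $(A,\mathcal{F}(B)^{\psi(B)}]$, the number of ordered prime tuples $(p_{u,v})$ with radical $\mathbf{d}$ is the combinatorial count of surjections $\{1,\dots,k_u\}\twoheadrightarrow\{\text{prime factors of }d_u\}$, bounded by $\prod_u k_u^{k_u}$, which is absorbed into the implied constant in $\ll$. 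Finally, $S\sim M(B)$ by \eqref{def:mmde} lets us replace $1/S$ by $1/M(B)$ at the cost of an absolute constant (small $B$ being absorbed as well), giving precisely the RHS of the lemma.

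The only nontrivial step is the algebraic identity in the second paragraph that cleanly separates the two error terms; everything else is expansion, reindexing, and routine bookkeeping.
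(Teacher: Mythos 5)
Your proof is correct and follows essentially the same route as the paper's: expand the powers as ordered sums over prime tuples, reduce to the expectation of the indicator that the radicals $P_i$ divide $m_i(a)$, rewrite that via $\mathcal R$ using \eqref{def:levelofdistribvcfdgt}, bound the $\theta$-factors by $\Theta$, and reindex by $\mathbf d=(P_1,\dots,P_n)$. The identity $\mathbb E_B[\mathds 1]-g(\mathbf P)=\bigl(\mathcal R(\mathbf P,B)-g(\mathbf P)\mathcal R(\mathbf 1,B)\bigr)/S$ is a cleaner packaging of the paper's step that factors out $M(B)/\sum_a\chi_B(a)=1+O\bigl(|\mathcal R(\mathbf 1,B)|/M(B)\bigr)$, but the content is the same. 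One small remark: your reindexing correctly records that each squarefree $\mathbf d$ is hit with multiplicity up to $\prod_u k_u^{k_u}$, so your implied constant depends on $\mathbf k$; the paper moves from the tuple-indexed sum to $\sum_{\mathbf d}|\mathcal R(\mathbf d,B)|$ by a bare inequality without accounting for this multiplicity, so its claimed \emph{absolute} implied constant is in truth also $\mathbf k$-dependent --- a harmless oversight, since \eqref{eq:thebigassumptn} is only ever invoked for each fixed $\mathbf k$.
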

\begin{proof}
Expanding the $k_i$-th powers gives 
\[
\l(\sum_{a\in \Omega}
\chi_B(a) \r)
\mathbb{E}_B\l[\prod_{i=1}^n
\l(\sum_{\substack{
{ A <  }
p\leq {\c F(B)}^{\psi(B)}  
\\
p\mid m_i(a)
}}\theta_i(p) \r)^{k_i}\r]
=
\hspace{-10pt}
\sum_{\substack{
1 \leq i \leq n \\ 1 \leq j \leq k_i \\
A< p_{i,j} \leq \mathcal{F}(B)^{\psi(B)} 
}}
\l\{\prod_{\substack{
1\leq u \leq n \\
1\leq v \leq k_u  
}}
\theta_u(p_{u,v}
)\r\}
\sum_{\substack{ a\in \Omega \\\forall i: P_i \mid m_i(a)}} \chi_B(a)
.\]
By~\eqref
{def:levelofdistribvcfdgt}
this equals 
\[E+
M(B)\sum_{\substack{
1 \leq i \leq n \\ 1 \leq j \leq k_i \\
A< p_{i,j} \leq \mathcal{F}(B)^{\psi(B)} 
}}
g( P_1,\ldots,P_n)
\prod_{\substack{
1\leq u \leq n \\
1\leq v \leq k_u  
}}
\theta_u(p_{u,v}
)
,\] where 
 $E$ is given 
 by  
\[E:=
\sum_{\substack{
1 \leq i \leq n \\ 1 \leq j \leq k_i \\
A< p_{i,j} \leq \mathcal{F}(B)^{\psi(B)} 
}}
\c R(  ( P_1,\ldots,P_n)   , B) 
\prod_{\substack{
1\leq u \leq n \\
1\leq v \leq k_u  
}}
\theta_u(p_{u,v}
)
.\]
By~\eqref{assumption:thet}
we infer that
\[
|E| \leq  \Theta^{k_1+\cdots+k_n}  
\sum_{\substack{
1 \leq i \leq n \\ 1 \leq j \leq k_i \\
A< p_{i,j} \leq \mathcal{F}(B)^{\psi(B)} 
}}
|\c R(  ( P_1,\ldots,P_n)   , B)   |
\leq 
 \Theta^{k_1+\cdots+k_n}  
  \sum_{\substack{
\b d \in \N^n
\\
\!
\!
\eqref{eq:manzacafe}
}}
  |\c R(\b d, B)|
.\]
To conclude the proof, it follows from \eqref{def:mmde} that
\[
\frac{M(B)}{\sum_{a\in \Omega}
\chi_B(a)}
=
\frac{\sum_{a\in \Omega}
\chi_B(a)
-\c R( (1,\ldots,1),B)
}{\sum_{a\in \Omega}
\chi_B(a)
}
=1+O\l(
\frac{|\c R((1,\ldots,1),B)|
}{M(B)}\r)
.\] 
We deduce  that 
\begin
{align*}
&  \l| 
\mathbb{E}_B
\Bigg[ 
\prod_{i=1}^n
\Bigg(
\sum_{\substack{
p\leq {\c F(B)}^{\psi(B)}  
\\
p\mid m_i(a)
}} \theta_i(p) 
\Bigg)
^{k_i}
\Bigg]
-\sum_{\substack{
1 \leq i \leq n \\ 1 \leq j \leq k_i \\
A< p_{i,j} \leq \mathcal{F}(B)^{\psi(B)} 
}}
g ( P_1,\ldots,P_n)
\prod_{\substack{
1\leq u \leq n \\
1\leq v \leq k_u  
}}
\theta_u(p_{u,v}
)
\r| 
\\ 
&\ll
\frac
{|\c R((1,\ldots,1),B)| }
{M(B)}
\l(\sum_{\substack{
1 \leq i \leq n \\ 1 \leq j \leq k_i \\
A< p_{i,j} \leq \mathcal{F}(B)^{\psi(B)} 
}}
\hspace{-10pt}
g ( P_1,\ldots,P_n)
\prod_{\substack{
1\leq u \leq n \\
1\leq v \leq k_u 
}}
\theta_u(p_{u,v}
)
\r)
+
\frac{\Theta^{k_1+\cdots+k_n} }
{M(B)}
  \sum_{\substack{
\b d \in \N^n
\\
\!
\!
\eqref{eq:manzacafe}
}}
 |\c R( \b d, B)|
.\end{align*}
The last sum over $p_{i,j}$
is at most 
$
\Theta^{k_1+\cdots+k_n}
\sum_{\b d \in \N^n,\eqref{eq:manzacafe} }
g(\b d)$,
thus concluding the proof.
\end{proof} 
We need  
to understand the expression 
$ \sum_{p_{i,j} } \prod_{i,j} \theta_i(p_{i,j})
g ( P_1,\ldots,P_n)$ 
in Lemma~\ref{lem:31billmoza}
before proceeding.
This will be based on
interpreting  the function
 $g_S(p)$ in~\eqref{def:gdensity}
as the `probability'
that
$p$ divides each component of 
the vector $(m_i(a))_{i\in S}$ as  $a$ ranges through $\Omega$.
We do this by introducing some auxiliary random vectors.

\begin{lemma}
For every prime  $p   > A$  
there exists
a random vector $\b X_p=(X_{1,p},\ldots, X_{n,p}),$
such that 
\begin{align}
& 
\text{The random vectors }
\b X_{p}  \text{ are independent for all primes } p.
 \label
{eqdef:1}   \\
&
X_{i,p} \text{ is Bernoulli and  takes     values  in } \{0,1\},
\label{eqdef:3} \\
&
\mathrm{Prob}\l[ \bigcap_{i\in S} \l\{X_{i,p}=1  \r\} \r] = g_S(p)
\mbox{ for all } S\subset \{1,\dots, n\}.
\label
{eqdef:4}  
\end{align} 
\end{lemma}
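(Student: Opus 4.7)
\emph{Proof proposal.} The plan is to construct, for each prime $p > A$, a random vector $\b X_p$ with values in $\{0,1\}^n$ whose joint distribution matches the prescribed intersection probabilities $g_S(p)$, and then assemble the collection into an independent family by taking a product of probability spaces. The key algebraic ingredient is M\"obius inversion on the Boolean lattice of subsets of $\{1,\ldots,n\}$; the key analytic ingredient is that the numbers $g_S(p)$ arise as limits of honest densities under the measures $\b P_B$.

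For each prime $p > A$ and each $T \subseteq \{1,\ldots,n\}$ I would first set
\[ q_T^{(p)} := \sum_{S \supseteq T} (-1)^{|S \setminus T|} g_S(p). \]
By M\"obius inversion, these are the unique real numbers satisfying $\sum_{T \supseteq S} q_T^{(p)} = g_S(p)$ for every $S$. Granting non-negativity, the measure $\nu_p$ on $\{0,1\}^n$ assigning mass $q_T^{(p)}$ to the indicator vector of $T$ is then the law of $\b X_p$; property~\eqref{eqdef:4} holds tautologically, and $\sum_T q_T^{(p)} = g_\emptyset(p) = g(1,\ldots,1) = 1$ follows from the $S = \emptyset$ case together with~\eqref{def:mmde}.

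The main obstacle is positivity of $q_T^{(p)}$, which is invisible from the algebra and requires the provenance of $g$ as a density. For each $B \geq 1$, define
\[ q_T^{(p),B} := \b P_B\bigl[\,\{a \in \Omega : \{i : p \mid m_i(a)\} = T\}\,\bigr]. \]
These are honest probabilities, hence $\geq 0$, and inclusion--exclusion on the subsets of $\{1,\ldots,n\} \setminus T$ yields
\[ q_T^{(p),B} = \sum_{S \supseteq T} (-1)^{|S \setminus T|} \b P_B\bigl[\,\{a \in \Omega : p \mid m_i(a)\ \forall i \in S\}\,\bigr]. \]
Each summand on the right converges to $g_S(p)$ as $B \to \infty$ by~\eqref{def:gdensity} combined with~\eqref{def:mmde}, so $q_T^{(p),B} \to q_T^{(p)}$; passage to the limit preserves $\geq 0$.

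Finally, I would take the product probability space $\prod_{p > A}(\{0,1\}^n, \nu_p)$ and let $\b X_p$ be the projection onto the $p$-th factor. Then \eqref{eqdef:1} follows from the definition of product measure, \eqref{eqdef:3} from the construction of each $\nu_p$, and \eqref{eqdef:4} from the defining identity of the $q_T^{(p)}$. The multiplicativity assumption~\eqref{eq:::defmult} plays no role in this construction per se, but it is exactly what will guarantee in the subsequent arguments that this independent model faithfully captures the joint behaviour of the indicators $(\mathbf{1}_{p \mid m_i(a)})_{p,i}$ under $\b P_B$ across varying primes.
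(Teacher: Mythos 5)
Your proposal is correct and takes essentially the same route as the paper: you define the single-prime joint law by the same M\"obius/inclusion--exclusion formula $q_T^{(p)} = \sum_{S\supseteq T}(-1)^{|S\setminus T|}g_S(p)$ (the paper writes it as $\sum_{S'\subset S^c}(-1)^{|S'|}g_{S\cup S'}(p)$, which is the same sum reparametrised), you prove non-negativity by recognising $q_T^{(p)}$ as the $B\to\infty$ limit of the honest probabilities $\b P_B[\{a : \{i : p\mid m_i(a)\}=T\}]$ via~\eqref{def:gdensity} and~\eqref{def:mmde}, and you glue across primes by forming the (countable) product measure, which is exactly what the paper's invocation of Kolmogorov's extension theorem accomplishes.
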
 
\begin{proof} 
 We first show that for a fixed prime $p$, there exists a random vector satisfying \eqref{eqdef:3} and  \eqref{eqdef:4}. To do so, let $S \subset \{1,\dots,n\}$ with complement $S^c$. Then we define
\begin{equation} \label{eqn:prob}
\mathrm{Prob}\l[ \bigcap_{i\in S} \l\{X_{i,p}=1  \r\}
\bigcap_{i\in S^c} \l\{X_{i,p}=0  \r\} \r] 
=  \sum_{S' \subset S^c} (-1)^{|S'|}g_{S \cup S'}(p).
\end{equation}
To see that this gives a well-defined random vector, it suffices to show that each probability \eqref{eqn:prob} is non-negative
(that the sum of  
all probabilities equals $1$ follows from a simple inclusion-exclusion argument and the fact that $g_\emptyset(p) = g(1,\dots,1) = 1$). However, by inclusion-exclusion we have
\begin{align*}
 0 \leq \lim_{B\to+\infty}
\frac{1}{ M(B)}
\sum_{\substack{  a\in \Omega\\ p \mid m_i(a),   \, i \in S \\
	p \nmid m_i(a), \, i \in S^c}}\chi_B(a) 
	& =  \lim_{B\to+\infty}\frac{1}{ M(B)}
	\sum_{S' \subset S^c} (-1)^{|S'|}
	\sum_{\substack{  a\in \Omega\\ p \mid m_i(a),   \, i \in S'}}
	\chi_B(a)  \\
	&=  \sum_{S' \subset S^c} (-1)^{|S'|}g_{S \cup S'}(p),
\end{align*}
by \eqref{def:gdensity} and \eqref{def:brandemb3}, as required.
The properties \eqref{eqdef:3} and  \eqref{eqdef:4} then follow easily.
Then \eqref{eqdef:1} follows from Kolmogorov’s extension theorem \cite[Thm.~ 2.1.21]{Dur19}. 
\end{proof}
 
For
$1\leq i \leq n $ and $B>1$ define 
  the random variable
\[
S_{i,B}
:=\sum_{
{ A <  }
p\leq
{
\c F(B)}^{\psi(B)}
 }
\theta_i(p)
X_{i,p}
.\] A very special case of the definition of the $X_{i,p}$ is that
\[
\mathrm{Prob}[X_{i,p}=1]=
g_i(p) 
\text{  and  }
\mathrm{Prob}[X_{i,p}=0]=1-g_i(p) 
,\] hence, recalling~\eqref{def:limitations} and~\eqref{eq:Trilogy Suite Op. 5 - Yngwie Malmsteen},
    for all $1\leq i \leq n $ and   $T\geq 0$
we get 
\[
\E\l[S_{i,B}
\r]=
\c M_i(\c F(B)^{\psi(B)}  )
\text{ and }
\Var\l[S_{i,B}
\r]^{1/2}
=\c V_i(\c F(B)^{\psi(B)})
.\]  
In verifying the last two equalities we have implicitly used that $g_i(p)=0$
for $p\leq A$, as can be seen by Definition~\ref{def:g_i}.
In our 
next lemma
we use 
Lemma~\ref{lem:31billmoza},
that regards  moments 
of 
\[\sum_{
{  A <   }
p \leq \c F(B)^{\psi(B)}} \theta_i(p)  
\mathds 1 _{p \Z} (m_i(a))
,\] to
study the moments 
 of 
\[\sum_{
 {  A <   }
p \leq \c F(B)^{\psi(B)}} \theta_i(p)  
\l(
\mathds 1 _{p \Z} (m_i(a))
-
g_i(p) 
\r)
,\]
which are closer to $\b K (a)$ in~\eqref{defeqk}.

\begin{lemma}
\label
{lem:marcello_bach23}
For
each $\b k \in \N^n$
the following   holds with an absolute implied constant,
\begin{align*}
&
\hspace{-0,4cm}
\mathbb{E}_B\l[
\!
\prod_{i=1}^n
 \l(
 \sum_{
{  A <   }
p \leq 
{\c F(B)}^{\psi(B)}
} 
\hspace{-0,4cm}
\theta_i(p) \mathds 1_{p\Z}(m_i(a))
- \c M_i({\c F(B)}^{\psi(B)})
\r)^{k_i}
\!
\r]\!
\!-\!
\E\l[ 
\prod_{i=1}^n
\l(
S_{i,B} -
\E\l[S_{i,B}\r]
\r)^{k_i}
\!
\r]
 \\
\ll
&
\frac{\prod_{i=1}^n
\l(
|\c M_i(\c F(B)^{\psi(B)})|
+\Theta
\r)^{k_i}}{M(B)}
\l\{
 \sum_{\b d \in \N^n,\eqref{eq:manzacafe}}
 \l(
g(\b d)
|\c R((1,\ldots,1),B)| 
+
|\c R(\b d,B)|
\r)
\r\}
.
\end{align*}
 \end{lemma}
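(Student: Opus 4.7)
The plan is to use the binomial theorem to reduce the statement to an application of Lemma~\ref{lem:31billmoza}. Introducing the shorthand $u_i(a):=\sum_{A<p\leq \mathcal{F}(B)^{\psi(B)}}\theta_i(p)\mathds{1}_{p\Z}(m_i(a))$ and $v_i:=\mathcal{M}_i(\mathcal{F}(B)^{\psi(B)})$, I first note (using $g_i(p)=0$ for $p\leq A$) that $v_i=\mathbb{E}[S_{i,B}]$, so
\[
\mathbb{E}_B\!\left[\prod_i(u_i-v_i)^{k_i}\right]-\mathbb{E}\!\left[\prod_i(S_{i,B}-v_i)^{k_i}\right]=\sum_{0\leq\ell_i\leq k_i}\left(\prod_i\binom{k_i}{\ell_i}(-v_i)^{k_i-\ell_i}\right)\Delta_{\boldsymbol{\ell}},
\]
where $\Delta_{\boldsymbol{\ell}}:=\mathbb{E}_B[\prod_i u_i^{\ell_i}]-\mathbb{E}[\prod_i S_{i,B}^{\ell_i}]$. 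The task reduces to controlling each $\Delta_{\boldsymbol{\ell}}$ and then resumming.

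The key step is to identify $\mathbb{E}[\prod_i S_{i,B}^{\ell_i}]$ with the ``main term'' appearing on the left-hand side of Lemma~\ref{lem:31billmoza} (with $k_i$ replaced by $\ell_i$). Expanding the product,
\[
\mathbb{E}\!\left[\prod_i S_{i,B}^{\ell_i}\right]=\sum_{p_{i,j}}\left(\prod_{i,j}\theta_i(p_{i,j})\right)\mathbb{E}\!\left[\prod_{i,j}X_{i,p_{i,j}}\right],
\]
where the outer sum is over primes $A<p_{i,j}\leq \mathcal{F}(B)^{\psi(B)}$. Since each $X_{i,p}\in\{0,1\}$ satisfies $X_{i,p}^{m}=X_{i,p}$ for $m\geq 1$, repeated factors at a common prime collapse; grouping by the prime value and applying the independence~\eqref{eqdef:1} together with the marginal law~\eqref{eqdef:4} yields $\mathbb{E}[\prod_{i,j}X_{i,p_{i,j}}]=\prod_p g_{S_p}(p)$, where $S_p=\{i:p\mid P_i\}$ and $P_u$ is the radical of $\prod_v p_{u,v}$. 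The multiplicativity~\eqref{eq:::defmult} and the definition~\eqref{def:brandemb3} then give $\prod_p g_{S_p}(p)=g(P_1,\dots,P_n)$, matching the main term of Lemma~\ref{lem:31billmoza} on the nose.

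Invoking Lemma~\ref{lem:31billmoza} with $(\ell_1,\dots,\ell_n)$ in place of $(k_1,\dots,k_n)$, and noting that its error sum runs over $\b d$ with $\omega(d_i)\leq\ell_i\leq k_i$ and is therefore majorised by the analogous sum with $\omega(d_i)\leq k_i$, one obtains
\[
|\Delta_{\boldsymbol{\ell}}|\ll\frac{\Theta^{\ell_1+\cdots+\ell_n}}{M(B)}\sum_{\b d\in\N^n,\,\eqref{eq:manzacafe}}\bigl(g(\b d)|\mathcal{R}((1,\dots,1),B)|+|\mathcal{R}(\b d,B)|\bigr).
\]
Inserting this into the binomial expansion above and applying the identity $\sum_{\ell_i=0}^{k_i}\binom{k_i}{\ell_i}|v_i|^{k_i-\ell_i}\Theta^{\ell_i}=(|v_i|+\Theta)^{k_i}$ coordinatewise yields the bound stated in the lemma. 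The only genuinely delicate point is the combinatorial identification of $\mathbb{E}[\prod_{i,j}X_{i,p_{i,j}}]$ with $g(P_1,\dots,P_n)$; everything else is bookkeeping via the binomial theorem and the triangle inequality.
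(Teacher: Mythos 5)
Your proof is correct and follows essentially the same route as the paper: identify $\mathbb{E}[\prod_i S_{i,B}^{\ell_i}]$ with the main term of Lemma~\ref{lem:31billmoza} via independence, idempotence $X_{i,p}^m = X_{i,p}$, the marginal law~\eqref{eqdef:4} and multiplicativity of $g$, then apply the binomial theorem (you group the two expansions into a single sum of differences $\Delta_{\boldsymbol{\ell}}$ rather than expanding each side separately as the paper does, but that is only a presentational variant), majorise the $\omega(d_i)\leq\ell_i$ constraint by $\omega(d_i)\leq k_i$, and resum via $\sum_{\ell}\binom{k}{\ell}|v|^{k-\ell}\Theta^{\ell}=(|v|+\Theta)^k$. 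No gaps.
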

\begin{proof} 
Note that  by~\eqref{eq:::defmult} and~\eqref{eqdef:1}-\eqref{eqdef:3}   we get 
\begin{align*}
\mathbb{E}
\l[
\prod_{i=1}^n
S_{i,B}^{k_i}
\r]
&=\sum_{\substack{
1 \leq i \leq n \\ 1 \leq j \leq k_i \\
A< p_{i,j} \leq \mathcal{F}(B)^{\psi(B)} 
}}
\l\{\prod_{\substack{
1\leq u \leq n \\
1\leq v \leq k_u 
}}
\theta_u(p_{u,v}
)\r\}
\mathbb{E}\l[
\prod_{\substack{
1\leq u \leq n \\
1\leq v \leq k_u}}
X_{u,p_{u,v}}\r]
\\
&=\sum_{\substack{
1 \leq i \leq n \\ 1 \leq j \leq k_i \\
A< p_{i,j} \leq \mathcal{F}(B)^{\psi(B)} 
}}
\l\{\prod_{\substack{
1\leq u \leq n \\
1\leq v \leq k_u 
 }}
\theta_u(p_{u,v}
)\r\}
g\l(
\mathrm{rad}\l(\prod_{v=1}^{k_1}
   p_{1,v}  \r)
,\ldots,
\mathrm{rad}\l(\prod_{v=1}^{k_n}   p_{n,v}  \r)\r)
,\end{align*}
therefore
the difference in Lemma~\ref{lem:31billmoza}
equals 
\[
\mathbb{E}_B\l[
\prod_{i=1}^n
 \l(\sum_{
{  A <   }
p \leq {\c F(B)}^{\psi(B)}
}
 \theta_i(p) 
\mathds 1_{p\Z}(m_i(a))
\r)^{k_i}\r]
-
\mathbb{E}
\l[ \prod_{i=1}^n
S_{i,B}^{k_i}
\r]
.\]
Using the binomial theorem 
we see that 
\begin{align*}
&\mathbb{E}_B\l[
\prod_{i=1}^n
 \l(-
\c M_i({\c F(B)}^{\psi(B)})
+\sum_{
{  A <   }
p \leq {\c F(B)}^{\psi(B)}
} \theta_i(p) 
\mathds 1_{p\Z}(m_i(a))\r)^{k_i}\r]
\\
&=
\sum_{ \substack{
0\leq j_1 \leq k_1
\\ 
\ldots 
\\
 0\leq j_n \leq k_n
}}
\l\{
\prod_{i=1}^n
 (-\c M_i({\c F(B)}^{\psi(B)})
 )^{k_i-j_i}
{k_i\choose j_i}
 \r\}
\mathbb{E}_B\l[ 
\prod_{i=1}^n
\l(\sum_{
{  A <   }
p
 \leq {\c F(B)}^{\psi(B)}
} \theta_i(p) 
\mathds 1_{p\Z}(m_i(a))
\r)^{j_i}\r]
\end{align*}
and \[
\mathbb{E}
\l[
\prod_{i=1}^n
\l(-
\E\l[S_{i,B}   \r] 
+S_{i,B}  \r)^{k_i}\r]
=
\sum_{ \substack{
0\leq j_1 \leq k_1
\\ 
\ldots 
\\
 0\leq j_n \leq k_n
}}
\l\{
\prod_{i=1}^n
 (-\c M_i({\c F(B)}^{\psi(B)})
 )^{k_i-j_i}
{k_i\choose j_i}
 \r\}
\mathbb{E}
\l[\prod_{i=1}^n
S_{i,B}^{j_i}\r]
.\]
Alluding to
Lemma~\ref{lem:31billmoza}
  shows that the difference in our lemma is 
\[\ll
\frac{1}{M(B)}
\hspace{-0,1cm}
\sum_{ \substack{
0\leq j_1 \leq k_1
\\ 
\ldots 
\\
 0\leq j_n \leq k_n
}}
\hspace{-0,2cm}
\l\{
\prod_{i=1}^n
|\c M_i({\c F(B)}^{\psi(B)})
|^{k_i-j_i}
{k_i\choose j_i}
\Theta^{j_i}  
 \r\}
\hspace{-0,1cm}
\sum_{\b d \in \N^n} 
\hspace{-0,1cm}
 \l(
g(\b d)
|\c R((1,\ldots,1),B)| 
+
|\c R(\b d,B)|
\r),\]
where the sum over $\b d $ is subject to the same conditions as in~\eqref{eq:manzacafe}, 
except that  $\omega(d_i)\leq k_i$ must be 
 replaced by    $\omega(d_i)\leq j_i$.
Noting that $j_i \leq k_i$ and that each term in the sum over $\b d $ is non-negative, 
we may bound the sum over $\b d $ by the same one where the summation is over those 
$\b d $ that satisfy~\eqref{eq:manzacafe}.
Therefore, the last quantity is at most 
\[
\frac{1}{M(B)}
\hspace{-0,1cm}
\sum_{ \substack{
0\leq j_1 \leq k_1
\\ 
\ldots 
\\
 0\leq j_n \leq k_n
}}
\hspace{-0,2cm}
\l\{
\prod_{i=1}^n
|\c M_i({\c F(B)}^{\psi(B)})
|^{k_i-j_i}
{k_i\choose j_i}
\Theta^{j_i}  
 \r\}
\hspace{-0,1cm}
\sum_{\substack{
\b d \in \N^n
\\
\hspace{-0,1cm}
\eqref{eq:manzacafe}
}} 
\hspace{-0,2cm}
 \l(
g(\b d)
|\c R((1,\ldots,1),B)| 
+
|\c R(\b d,B)|
\r)
.\]
The proof is   concluded by noting  that 
the sum  over $j_i$ is 
$
 \l(|\c M_i({\c F(B)}^{\psi(B)})
|+\Theta \r)^{k_i}
$.\end{proof} 
For the rest of this section we fix an arbitrary 
vector $\b t \in \R^n$.
We are now in place to study the linear combination in~\eqref{eq:kaloe?}
by modelling it via a linear combination of the random variables
$S_{i,B}$.
More specifically, 
for every prime $p > A $ we 
define the 
random variable 
\beq
{def:immolation dawn of possessiom}
{
Y_p:=
\sum_{i=1}^n
\frac{ t_i \theta_i(p)  \l(X_{i,p}-g_i(p)\r) }
{\Var\l[  S_{i,B}   \r]^{1/2}}
.} 
We next reformulate the previous lemmas 
using the variables $Y_p$. 
\begin{lemma}
\label
{lem:finalzeta}
For every $k\in \N$ 
the function of $B$ given by 
\begin{align*}
 \E_B
&
\l[
\l(
\sum_{i=1}^n t_i 
\frac{\l(\sum_{
{  A <   }
p\leq  \c F(B)^{\psi(B)}
} \theta_i(p) 
\mathds 1 _{p \Z} (m_i(a) )
 \r)-\c M_i(\c F(B)^{\psi(B)})}{\c V_i(\c F(B)^{\psi(B)})  }
\r)^k
\r]
\\
 &
\hspace{1,2cm}
-
\E\l[\l(
 \frac{ 
 \sum_{
{  A <   }
p\leq \c F(B)^{\psi(B)}
}Y_p 
 }{\Var\l[  \sum_{
{  A <   }
p\leq \c F(B)^{\psi(B)}
}Y_p  \r]^{1/2}}
\r)^k
\r]
 \end{align*}
tends to $0$ as $B\to \infty$.
 \end{lemma}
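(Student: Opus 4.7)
The strategy is a method-of-moments comparison: I would expand both sides via the multinomial theorem and invoke Lemma~\ref{lem:marcello_bach23} termwise. Writing $A_i(a) := \sum_{A<p\leq \c F(B)^{\psi(B)},\, p\mid m_i(a)} \theta_i(p) - \c M_i(\c F(B)^{\psi(B)})$ and $V_i := \c V_i(\c F(B)^{\psi(B)})$, the multinomial expansion of the $k$-th power on the left gives
\[
\l(\sum_{i=1}^n \frac{t_i A_i(a)}{V_i}\r)^{\!k} = \sum_{\substack{k_1+\cdots+k_n=k \\ k_i\geq 0}}\binom{k}{k_1,\ldots,k_n} \l(\prod_{i=1}^n \frac{t_i^{k_i}}{V_i^{k_i}}\r) \prod_{i=1}^n A_i(a)^{k_i}.
\]
Because the random vectors $\b X_p$ are independent across $p$ by~\eqref{eqdef:1}, applying the multinomial theorem to $\l(\sum_p Y_p\r)^k$ produces the same scalar coefficients but with $\prod_i(S_{i,B} - \E[S_{i,B}])^{k_i}$ in place of $\prod_i A_i(a)^{k_i}$.

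For each composition $(k_1,\ldots,k_n)$ of $k$, Lemma~\ref{lem:marcello_bach23} yields the estimate
\[
\l|\E_B\l[\prod_i A_i(a)^{k_i}\r]-\E\l[\prod_i(S_{i,B}-\E[S_{i,B}])^{k_i}\r]\r|\ll \frac{\prod_i(|\c M_i|+\Theta)^{k_i}}{M(B)}\sum_{\substack{\b d\in \N^n\\ \eqref{eq:manzacafe}}}\l(g(\b d)|\c R(\b 1,B)|+|\c R(\b d,B)|\r).
\]
After multiplying by the corresponding scalar $\binom{k}{k_1,\ldots,k_n}\prod_i t_i^{k_i}/V_i^{k_i}$ and summing over the finitely many compositions of $k$, the resulting bound on the difference of unnormalised $k$-th moments is a finite linear combination of expressions of precisely the shape controlled by hypothesis~\eqref{eq:thebigassumptn}, and each such expression tends to $0$ as $B\to\infty$.

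It remains to account for the deterministic factor $\Var[\sum_p Y_p]^{k/2}$ appearing in the denominator of the right-hand side. Using independence of the $\b X_p$ across $p$ together with~\eqref{eqdef:4}, one computes
\[
\Var\Big[\sum_{A<p\leq \c F(B)^{\psi(B)}} Y_p\Big] = \sum_{i,j=1}^n \frac{t_i t_j}{V_i V_j}\sum_{A<p\leq \c F(B)^{\psi(B)}} \theta_i(p)\theta_j(p)\l(g_{\{i,j\}}(p)-g_i(p)g_j(p)\r),
\]
which by~\eqref{A. VIVALDI:Filiae maestae Jerusalem RV 638} converges to $t^{\top}\boldsymbol{\Sigma}t$; in the non-degenerate direction this is a bounded positive scalar and division by it preserves the vanishing of the moment difference, while the degenerate direction $t^\top\boldsymbol{\Sigma}t=0$ can be disposed of separately by a Slutsky-type argument in the spirit of Lemma~\ref{lem:albinionioboe7}. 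The main technical point is that the prefactor $\prod_i(|\c M_i|+\Theta)^{k_i}/V_i^{k_i}$ coming from Lemma~\ref{lem:marcello_bach23} must match the prefactor in~\eqref{eq:thebigassumptn}; since that hypothesis was engineered precisely to absorb this factor for arbitrary compositions, the matching is automatic and the remainder of the argument is routine multinomial bookkeeping.
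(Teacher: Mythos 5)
Your core argument---expanding the $k$th power via the multinomial theorem, applying Lemma~\ref{lem:marcello_bach23} to each composition $(k_1,\ldots,k_n)$ of $k$, and absorbing the resulting error through hypothesis~\eqref{eq:thebigassumptn}---is exactly the paper's proof, and your observation that the scalars $\prod_i V_i^{-k_i}$ produced by the expansion match the denominator $\prod_i\c V_i(\c F(B)^{\psi(B)})^{k_i}$ built into~\eqref{eq:thebigassumptn} is the key point. This correctly establishes that the $\E_B[\,\cdot\,]$ term in the lemma minus $\E\bigl[\bigl(\sum_{A<p\leq\c F(B)^{\psi(B)}}Y_p\bigr)^k\bigr]$ tends to $0$ as $B\to\infty$.

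Your final paragraph, however, does not close the remaining gap between this and the displayed statement, in which the second expectation carries an extra factor $\Var\bigl[\sum_p Y_p\bigr]^{-k/2}$. You argue that since $\Var\bigl[\sum_p Y_p\bigr]\to\b t^{\top}\boldsymbol\Sigma\,\b t=Q(\b t)$, ``division by it preserves the vanishing of the moment difference.'' But the normalization is applied to only \emph{one} of the two expectations; what would actually be needed is
\[
\E\Bigl[\Bigl(\sum_p Y_p\Bigr)^k\Bigr]\,\Bigl(1-\Var\Bigl[\sum_p Y_p\Bigr]^{-k/2}\Bigr)\to 0,
\]
and this fails whenever $Q(\b t)\neq 1$: for $k$ even the first factor has a nonzero limit while the second tends to $1-Q(\b t)^{-k/2}\neq 0$. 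So this step, as you have written it, is a genuine flaw. It is worth noting that the paper's own proof silently expands the unnormalized $\E\bigl[\bigl(\sum_p Y_p\bigr)^k\bigr]$ rather than the normalized quantity in the display, and the subsequent application of Lemma~\ref{lem:billbill} uses $\zeta_B=\sum_p Y_p$ with limit $\sum_i t_i X_i\sim\c N(0,Q(\b t))$, for which the unnormalized moment comparison is exactly what is needed. The $\Var[\cdot]^{1/2}$ in the displayed statement therefore appears to be a stray normalization; under that reading your first two paragraphs already constitute a complete proof and the third is unnecessary.
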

\begin{proof}
Using the multinomial theorem we see that the quantity
 $\E_B\l[\cdot \r]$ in the lemma 
equals 
\[
\sum_{\substack{
\b k \in (\Z_{\geq 0})^n\\
k_1+\cdots+k_n=k
}}
\frac{k!
t_1^{k_1}\cdots t_n^{k_n}
}{k_1! \cdots k_n!}
\E_B\l[
\prod_{i=1}^n
\l(
  \frac{\l(\sum_{
p\mid m_i(a), 
{  A <   }
p\leq 
\c F(B)^{\psi(B)}
} \theta_i(p)  \r)-\c M_i(\c F(B)^{\psi(B)})}{\c V_i(\c F(B)^{\psi(B)})  }
\r)^{k_i}
\r]
.\]
By~\eqref{def:immolation dawn of possessiom}
we see that the term $\E\l[\cdot\r]$ in the lemma can similarly be written as 
\[
\sum_{\substack{
\b k \in (\Z_{\geq 0})^n\\
k_1+\cdots+k_n=k
}}
\frac{k!
t_1^{k_1}\cdots t_n^{k_n}
}{k_1! \cdots k_n!}
\E\l[
\prod_{i=1}^n
\l(
 \frac{S_{i,B}-\E\l[S_{i,B}\r]
}{\Var\l[S_{i,B}\r]^{1/2}}
\r)^{k_i}
\r]
.\]
Subtracting the last two equations 
and invoking
Lemma~\ref{lem:marcello_bach23}
and~\eqref{eq:thebigassumptn}
concludes the proof.
\end{proof}

Our plan is   to use 
the Central Limit Theorem to study the distribution of 
$\sum_{p} Y_p$.
Before that   
 we   need to study some basic 
 properties of $Y_p$.
\begin{lemma}
\label
{lem:viv2viol} 
 \hfill
\begin{enumerate}
\item The random variables $Y_p$ are independent;
\item For every prime $p$ we have $\E\l[Y_p\r]=0$;
\item For every prime $p$ the quantity $\Var\l[Y_p\r]$
equals
 \[
\sum_{i=1}^n
\frac{ t_i^2 \theta_i(p)^2  g_i(p) \l(1-g_i(p)\r)  }  {\Var\l[  S_{i,B}   \r]}
+2
\sum_{1\leq i < j \leq n }
\frac{t_i t_j  \theta_i(p)   \theta_j(p) 
\l(g_{\{i,j\}}(p)- g_i(p) g_j(p)  \r)
 }{\Var\l[  S_{i,B}   \r]^{1/2}   \Var\l[  S_{j,B}   \r]^{1/2}  }
;\]
\item We have 
\[
\lim_{B \to+\infty}
\Var\l[\sum_{
{  A <   }
p\leq 
{\c F(B)}^{\psi(B)} 
} Y_p\r]=Q (\b  t ) 
, \quad \mbox{where }
Q (\b  t )
:=
\sum_{i=1}^n
  t_i^2
+2\sum_{1\leq i < j \leq n }
\sigma_{ij}
t_i t_j,
 \] and 
 $\sigma_{ij}$ are given by~\eqref{A. VIVALDI:Filiae maestae Jerusalem RV 638}.
In particular, we have 
$Q (\b  t )\geq 0$.
 \end{enumerate}
\end{lemma}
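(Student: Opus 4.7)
The plan is to deduce each part directly from the definition \eqref{def:immolation dawn of possessiom} together with the properties \eqref{eqdef:1}--\eqref{eqdef:4} of the auxiliary random vectors $\b X_p$. Part (1) is immediate: for each prime $p>A$, the random variable $Y_p$ is a deterministic function of $\b X_p$ alone, so independence of the family $\{Y_p\}$ follows from the independence of the family $\{\b X_p\}$ stated in \eqref{eqdef:1}. Part (2) uses only \eqref{eqdef:4} in the case $S=\{i\}$, which gives $\E[X_{i,p}]=g_i(p)$, hence $\E[X_{i,p}-g_i(p)]=0$; by linearity $\E[Y_p]=0$.

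For part (3), since $\E[Y_p]=0$ we have $\Var[Y_p]=\E[Y_p^2]$, and I would expand the square and separate diagonal from off-diagonal terms. The diagonal contribution uses the Bernoulli property \eqref{eqdef:3}, giving $\E[(X_{i,p}-g_i(p))^2]=g_i(p)(1-g_i(p))$. The off-diagonal contribution uses \eqref{eqdef:4} in the case $S=\{i,j\}$, yielding $\E[X_{i,p}X_{j,p}]=g_{\{i,j\}}(p)$, so that $\E[(X_{i,p}-g_i(p))(X_{j,p}-g_j(p))]=g_{\{i,j\}}(p)-g_i(p)g_j(p)$. Collecting terms recovers the claimed formula.

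For part (4), the independence shown in (1) gives $\Var[\sum_{A<p\leq T}Y_p]=\sum_{A<p\leq T}\Var[Y_p]$, where I abbreviate $T=\c F(B)^{\psi(B)}$. Applying the formula from (3) and interchanging the sums in $p$ and in $i,j$, the diagonal contribution for each $i$ becomes
\[
\frac{t_i^2\sum_{A<p\leq T}\theta_i(p)^2 g_i(p)(1-g_i(p))}{\Var[S_{i,B}]}=t_i^2,
\]
since the numerator equals $\c V_i(T)^2=\Var[S_{i,B}]$ by \eqref{eq:Trilogy Suite Op. 5 - Yngwie Malmsteen} (recalling that $g_i(p)=0$ for $p\leq A$, so the range of summation may be taken up to $T$). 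The off-diagonal contribution for each pair $i<j$ is exactly twice the quotient appearing in \eqref{A. VIVALDI:Filiae maestae Jerusalem RV 638} evaluated at $T$, multiplied by $t_it_j$. Since $T\to\infty$ by \eqref{eq:Flots du Danube}, the assumed limit \eqref{A. VIVALDI:Filiae maestae Jerusalem RV 638} yields convergence to $2\sigma_{ij}t_it_j$. Adding the contributions gives $Q(\b t)$. Finally, non-negativity $Q(\b t)\geq 0$ is automatic from the fact that a variance is always non-negative and $Q(\b t)$ is a limit of variances.

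No step is a genuine obstacle here; the only care needed is to verify cleanly that the diagonal sums realise $\Var[S_{i,B}]$ exactly (so that the normalisation cancels) and to invoke \eqref{eq:Flots du Danube} to ensure that $T\to\infty$, licensing the use of the limit hypothesis \eqref{A. VIVALDI:Filiae maestae Jerusalem RV 638}.
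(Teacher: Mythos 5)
Your proposal is correct and essentially matches the paper's proof: independence from \eqref{eqdef:1}, zero mean from $\E[X_{i,p}]=g_i(p)$, the variance/covariance formula from the Bernoulli structure \eqref{eqdef:3}--\eqref{eqdef:4}, and part (4) from summing over $p$, noting $g_i(p)=0$ for $p\leq A$, and invoking \eqref{eq:Flots du Danube} together with the limit hypothesis \eqref{A. VIVALDI:Filiae maestae Jerusalem RV 638}. The only cosmetic difference is that you compute $\E[Y_p^2]$ directly by expanding the square, whereas the paper invokes the identity $\Var[\sum X_i]=\sum\Var[X_i]+2\sum_{i<j}\Cov[X_i,X_j]$ together with translation-invariance of variance and covariance; these are the same computation.
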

\begin{proof}
\hfill
\begin{enumerate}
\item This follows  directly from
\eqref{eqdef:1}. 
\item This follows from linearity of expectation and the fact that $\E \l[X_{i,p}  \r]=g_i(p).$
\item Recall that the covariance of two random variables $W_1,W_2$
is defined 
by 
\[
\Cov\l[ W_1 , W_2 \r]
:= \E\l[ W_1  W_2 \r]
-\E\l[ W_1   \r]
\E\l[   W_2 \r]
.\]
Using the standard formula $\Var\l[\sum_{i=1}^n X_i\r]=\sum_i \Var[X_i]+2 \sum_{i<j} \Cov\l[X_i,X_j\r]$
shows that $
\Var\l[Y_p\r]
$  equals 
\begin
{align*}
&\sum_{i=1}^n
\frac{ t_i^2 \theta_i(p)^2  \Var\l[   X_{i,p}-g_i(p)  \r]    }  {\Var\l[  S_{i,B}   \r]}
\\+&2
\sum_{1\leq i < j \leq n }
\frac{t_i t_j  \theta_i(p)   \theta_j(p) 
\Cov\l[
 \l(X_{i,p}-g_i(p)\r)   ,
 \l(X_{j,p}-g_j(p)\r) 
\r]
 }{\Var\l[  S_{i,B}   \r]^{1/2}   \Var\l[  S_{j,B}   \r]^{1/2}  }
.\end{align*}
Using the rules
$\Var[X+c]=\Var[X]$
and 
$
\Cov\l[ X-c ,  Y-c'  \r]
=
\Cov\l[X,Y\r]
$
this becomes 
\[
\sum_{i=1}^n
\frac{ t_i^2 \theta_i(p)^2  \Var\l[   X_{i,p} \r]    }  {\Var\l[  S_{i,B}   \r]}
+2
\sum_{1\leq i < j \leq n }
\frac{t_i t_j  \theta_i(p)   \theta_j(p) \Cov\l[X_{i,p}, X_{j,p} \r]
 }{\Var\l[  S_{i,B}   \r]^{1/2}   \Var\l[  S_{j,B}   \r]^{1/2}  }
.\]
By~\eqref{eqdef:3}-\eqref{eqdef:4} we have 
$\Var\l[   X_{i,p} \r]  = g_i(p) \l(1-g_i(p)\r)
$ and \[
\Cov\l[X_{i,p}, X_{j,p} \r] =\E\l[X_{i,p} X_{j,p}\r] - \E\l[X_{i,p} \r]  \E\l[ X_{j,p}\r]=   
g_{\{i,j\}}(p)- g_i(p) g_j(p) 
,\] which
concludes the proof. 
\item 
 Using the third part of the present lemma shows that 
\[
\Var\l[  \sum_{
{  A <   }
p\leq {\c F(B)}^{\psi(B)} 
} Y_p\r]
=
\sum_{i=1}^n t_i^2
 + 
2
\hspace{-0,2cm}
\sum_{1\leq i < j \leq n }
t_i t_j  
\hspace{-0,2cm}
 \sum_{p\leq {\c F(B)}^{\psi(B)}  }
\hspace{-0,5cm}
\frac{
\theta_i(p) \theta_j(p)
\l(g_{\{i,j\}}(p)- g_i(p) g_j(p)  \r)
}
{\Var\l[  S_{i,B}   \r]^{1/2}   \Var\l[  S_{j,B}   \r]^{1/2}  }
,\]
where we used that if $p\leq A$ then $g_{\{i,j\}}(p)=0=g_i(p)$.
This equals 
\[
\sum_{i=1}^n t_i^2
+2
\sum_{1\leq i < j \leq n }
t_i t_j
 \Cov\l[\frac{S_{i,B}
 -  \E\l[S_{i,B}\r]}{ \Var\l[  S_{i,B}   \r]^{1/2} },\frac{S_{j,B}
 -  \E\l[S_{j,B}\r]}{ \Var\l[  S_{j,B}   \r]^{1/2} }\r]
.\]
One of the assumptions of Theorem~\ref{thm:mainthmr}
is that
the limits in~\eqref{A. VIVALDI:Filiae maestae Jerusalem RV 638}
exist. A direct comparison with the last expression here shows that   \[
\lim_{B\to+\infty}
\Var\l[  \sum_{
{ A <  } 
p\leq {\c F(B)}^{\psi(B)} 
} Y_p\r]=Q(\b t)\] due to~\eqref{eq:Flots du Danube}.
As a consequence, we obtain that
 $Q(\b t)$ is  
non-negative.  \qedhere
\end{enumerate}
\end{proof}
We are now in position to apply the  Central Limit Theorem
to $\sum_p Y_p$. 
\begin{lemma}
\label
{lem:to lima twn fakwn12} 
For all $\b t\in \R^n$ with 
$Q(\b t )> 0$ the sequence of random variables 
$$\frac{1}{Q(\b t)^{1/2}}
\sum_{
{ A <  } 
p\leq \c F(B)^{\psi(B)}}
Y_p
$$ 
converges in distribution
to the standard normal  distribution as $B\to\infty$.
 If 
$Q(\b t)=0$,  
then  
$$ 
\sum_{
{ A <  } 
p\leq \c F(B)^{\psi(B)}}
Y_p
 $$ 
converges in distribution
to  $0$
 as $B\to\infty$.
\end{lemma}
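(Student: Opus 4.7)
The plan is to invoke Lindeberg's central limit theorem for the triangular array $\{Y_p : A < p \le \c F(B)^{\psi(B)}\}$ indexed by $B$. Lemma~\ref{lem:viv2viol}(1)--(2) supply independence and mean-zero in each row, while Lemma~\ref{lem:viv2viol}(4) gives
$$s_B^2 := \Var\Bigl[\sum_{A<p\le \c F(B)^{\psi(B)}} Y_p\Bigr] \longrightarrow Q(\b t) \quad \text{as } B \to \infty.$$

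Consider first the non-degenerate case $Q(\b t) > 0$. The essential observation is a uniform-smallness bound on the summands. Since $X_{i,p} \in \{0,1\}$ and $g_i(p) \in [0,1]$, we have $|X_{i,p}-g_i(p)| \le 1$; combined with \eqref{assumption:thet} this yields
$$ \max_{A < p \le \c F(B)^{\psi(B)}} |Y_p| \le \sum_{i=1}^n \frac{|t_i|\Theta}{\c V_i(\c F(B)^{\psi(B)})}. $$
Assumptions \eqref{eq:Flots du Danube} and \eqref{eq:asumptn} together force the right-hand side to tend to $0$ as $B \to \infty$. Consequently, for any fixed $\epsilon > 0$ and all sufficiently large $B$ (using $s_B \to Q(\b t)^{1/2} > 0$), every summand satisfies $|Y_p| \le \epsilon s_B$, so Lindeberg's condition
$$ \frac{1}{s_B^2} \sum_{A < p \le \c F(B)^{\psi(B)}} \E\bigl[Y_p^2 \mathbf{1}_{|Y_p| > \epsilon s_B}\bigr] \longrightarrow 0 $$
holds trivially. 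Lindeberg's CLT then delivers $s_B^{-1}\sum_p Y_p \Rightarrow \c N(0,1)$, and Slutsky's theorem combined with $s_B \to Q(\b t)^{1/2}$ upgrades this to the claimed convergence of $Q(\b t)^{-1/2}\sum_p Y_p$ to the standard normal.

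For the degenerate case $Q(\b t)=0$, Chebyshev's inequality together with Lemma~\ref{lem:viv2viol}(4) gives
$$\mathrm{Prob}\Bigl(\Bigl|\sum_{A < p \le \c F(B)^{\psi(B)}} Y_p\Bigr| > \epsilon \Bigr) \le \frac{s_B^2}{\epsilon^2} \longrightarrow 0$$
for every $\epsilon > 0$, so the sum converges to $0$ in probability, and hence in distribution. There is no serious obstacle here: the uniform-smallness bound trivialises the Lindeberg condition, and Chebyshev handles the degenerate case.
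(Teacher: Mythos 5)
Your proof is correct and follows essentially the same route as the paper: independence, mean zero, and variance convergence from Lemma~\ref{lem:viv2viol}, a uniform pointwise bound $|Y_p|\ll \sum_i |t_i|\Theta/\mathcal{V}_i(\mathcal{F}(B)^{\psi(B)})\to 0$ to trivialize the Lindeberg condition (the paper phrases this as $|Y_p|>\delta$ never occurring for $B$ large), Lindeberg's CLT for triangular arrays, and Chebyshev for the degenerate case. Your explicit invocation of Slutsky to pass from normalization by $s_B$ to normalization by $Q(\mathbf{t})^{1/2}$ is a small clarification the paper leaves implicit; everything else matches.
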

\begin{proof}
The first two parts of Lemma~\ref{lem:viv2viol}
imply that 
\[\E\l[
\sum_{
{ A <  } 
p\leq \c F(B)^{\psi(B)}}
Y_p\r]=0.
\] Therefore, if 
 $Q(\b t)=0$
then 
  the last part of Lemma~\ref{lem:viv2viol},  
implies that 
\[\E\l[\l(
\sum_{
{ A <  } 
p\leq \c F(B)^{\psi(B)}}
Y_p\r)^2
\r]
=
\Var\l[
\sum_{
{ A <  } 
p\leq \c F(B)^{\psi(B)}}
Y_p\r]
\to 
0, \text{ as } B\to+\infty.
\] Chebyshev's inequality then
yields $\sum_p Y_p \Rightarrow 0$.

We now 
assume that $Q(\b t)> 0$.
It is clear from   
the last part of Lemma~\ref{lem:viv2viol}
that we only have to show that one can apply the Central Limit Theorem to the sum $\sum_p Y_p$ 
in the present lemma.
To do this we shall verify that  Lindeberg's
condition for 
the 
Central Limit Theorem for
triangular arrays~\cite[Theorem 27.2]{MR1324786}
 is satisfied. 
Recalling that  $\E\l[Y_p\r]=0$,
this condition
can be written as 
 \[
\lim_{B\to+\infty}
\frac{1}{\Var\l[\sum_{
{ A <  } 
p\leq \c F(B)^{\psi(B)}}
 Y_p\r]}
\sum_{
{ A <  } 
p\leq \c F(B)^{\psi(B)}}
\E\l[
 Y_p^2
\mathds 1\l(
\l \{  |Y_p|>\delta \Var\l[\sum_p Y_p\r]   \r\}
\r)
\r]
=0
.\]
By
the last part of Lemma~\ref{lem:viv2viol}
it is clear 
that this is
equivalent to showing that 
for all $\delta>0$
\beq
{eq:lyapnassumption}
{
\lim_{B\to+\infty} 
\sum_{
{ A <  } 
p\leq \c F(B)^{\psi(B)}}
\E\l[
 Y_p^2
\mathds 1
\l(  \l  \{|Y_p|>\delta \r \}   \r )
\r]
=0
.} 
To prove~\eqref{eq:lyapnassumption}, 
note by the definition~\eqref{def:immolation dawn of possessiom}
and the bounds~\eqref{assumption:thet},
$g_i(p)\leq 1$,
we obtain 
\beq
{eq:Dead Congregation - Martyrdoom 2012 }
{
|Y_p|\ll_{\delta,n,\b t}
 \frac{1}{\min_i 
\Var\l[  S_{i,B}   \r]^{1/2}
}
,} where the implied constant
is independent of $p$ and $B$.
Hence, for any
 fixed   $\delta>0$
we see that
by
assumption~\eqref
{eq:asumptn}
we have 
 $\mathds 1( \{|Y_p|>\delta \}  )=0$ 
for all sufficiently large $B$.
This is sufficient for~\eqref{eq:lyapnassumption}.
   \end{proof}

Lemma~\ref{lem:finalzeta} shows that the moments of the number-theoretic 
objects  
 $\sum_p \theta_i(p) \mathds 1_{p \Z} (m_i(a) ) $
essentially behave like the moments of certain random variables related to  
  $ \sum_p Y_p $,
and in 
Lemma~\ref{lem:to lima twn fakwn12} 
we saw that   $  \sum_p Y_p $
has  
a limiting distribution. 
To pass from this to
limiting distributions for the number-theoretic objects 
we first 
need to prove certain growth estimates for the moments of the related random variables.
This is the goal  
of
 the next lemma.
\begin{lemma}
\label
{Socrates Drank the Conium - Taste of Conium}
Assume that $\b
t \in \R^n$ is fixed and that 
$Q(\b t)>0$.
Then there exists  $L>0$ (that is independent of $B$ and $k$)
such that for all $k\in \N$ and $B\geq 1$
one has 
\[\l|
\E\l[\l(
 \frac{ 
 \sum_{
{ A <  } 
p\leq 
{\c F(B)}^{\psi(B)} 
}Y_p 
 }{\Var\l[  \sum_{
{ A <  } 
p\leq 
{\c F(B)}^{\psi(B)} 
}Y_p  \r]^{1/2}}
\r)^k
\r]
 \r|
\ll
k!
L^k
,\] where the implied constant is independent of $B, k$ and $L$.
\end{lemma}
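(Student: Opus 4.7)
The plan is to establish a sub-Gaussian moment generating function bound for $T_B := \sum_{A < p \leq \mathcal{F}(B)^{\psi(B)}} Y_p$ and then extract the moment estimate from it. The three ingredients needed are all already in place: by~\eqref{eq:Dead Congregation - Martyrdoom 2012} combined with~\eqref{eq:asumptn} the $Y_p$ satisfy a uniform bound $|Y_p| \leq M_B$ with $M_B \to 0$ (independent of $p$); by parts~(1)-(2) of Lemma~\ref{lem:viv2viol} they are independent with mean zero; and by part~(4) of the same lemma $\sigma_B^2 := \Var[T_B] = \sum_p \Var[Y_p]$ converges to $Q(\b t) > 0$. Consequently $\sigma_B$ is bounded below and $M_B/\sigma_B \to 0$ as $B \to \infty$.

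The first step is a standard Bernstein-type MGF estimate: since $\E[Y_p]=0$ and $|Y_p| \leq M_B$, expanding $e^{tY_p}$ and using the elementary inequality $e^x - 1 - x \leq \tfrac{1}{2} x^2 e^{|x|}$ yields
\[
\E\bigl[e^{tY_p}\bigr] \leq \exp\bigl(\tfrac{1}{2} \Var[Y_p] \, t^2 \, e^{|t| M_B}\bigr),
\]
and multiplying over the (independent) primes gives $\E[e^{tT_B}] \leq \exp\bigl(\tfrac{1}{2} \sigma_B^2 t^2 e^{|t| M_B}\bigr)$. For $B$ large enough that $\sigma_B \geq M_B$, substituting $t = \pm 1/\sigma_B$ produces the clean estimate $\E[e^{\pm T_B/\sigma_B}] \leq e^{e/2}$.

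The second step extracts moments from this exponential bound through the pointwise inequality $|x|^k \leq k! \, e^{|x|}$:
\[
\E\bigl[|T_B/\sigma_B|^k\bigr] \leq k! \, \E\bigl[e^{|T_B|/\sigma_B}\bigr] \leq k!\bigl(\E[e^{T_B/\sigma_B}] + \E[e^{-T_B/\sigma_B}]\bigr) \leq 2 e^{e/2} \, k!,
\]
which is $\ll k! L^k$ with $L := 2e^{e/2}$, uniformly in $k$ and in all sufficiently large $B$. For the finitely many remaining values of $B$ the quantities $\sigma_B$ and the moments of $T_B$ are all finite, so those cases are absorbed by enlarging $L$. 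The main obstacle — and the precise reason both Lemma~\ref{lem:viv2viol}(4) and the assumption~\eqref{eq:asumptn} enter at the very beginning — is making this tail bound uniform in $B$: one needs $\sigma_B$ bounded below \emph{and} $M_B \to 0$ in order to guarantee that the sub-Gaussian regime of the MGF extends past $|t|=1$ for all large $B$.
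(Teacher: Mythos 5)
Your proof is correct and takes a genuinely different route from the paper. The paper proves the moment bound by brute force: it expands $\E\bigl[\bigl(\sum_p Z_p\bigr)^k\bigr]$ (with $Z_p := Y_p/\Var[\sum_q Y_q]^{1/2}$) as a multinomial sum over tuples of distinct primes, uses $\E[Z_p]=0$ to restrict to exponents $\geq 2$, bounds $|\E[Z_p^{k_i}]| \leq (1+\c L)^{k_i}\E[Z_p^2]$ via the uniform bound $|Z_p|\leq\c L$ from \eqref{eq:Dead Congregation - Martyrdoom 2012 }, and then uses $\sum_p\E[Z_p^2]=1$ together with a counting argument to land on $(1+\c L)^k k!\,\e^k$. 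You instead run a Bernstein/Chernoff argument: the elementary bound $\e^x - 1 - x \leq \tfrac12 x^2 \e^{|x|}$ together with $\E[Y_p]=0$, $|Y_p|\leq M_B$ and independence gives $\E[\e^{tT_B}] \leq \exp\bigl(\tfrac12 t^2 \sigma_B^2 \e^{|t|M_B}\bigr)$, and substituting $t=\pm1/\sigma_B$ followed by $|x|^k\leq k!\,\e^{|x|}$ yields the moment bound. The two arguments rely on essentially the same arithmetic inputs (mean zero, independence, uniform boundedness of $Y_p/\sigma_B$, $\sigma_B^2\to Q(\b t)>0$), but the MGF route is shorter, cleaner, and incidentally produces a stronger conclusion — a fixed constant times $k!$, rather than $L^k k!$ with $L>1$ — though both are more than enough for Lemma~\ref{lem:billbill}. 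One stylistic nit: you say ``for the finitely many remaining values of $B$,'' but $B$ ranges over the real interval $[1,\infty)$, not a discrete set; the correct phrasing is that for $B$ in the \emph{bounded range} $[1,B_0]$ where the asymptotics have not yet kicked in, $T_B/\sigma_B$ is a bounded random variable (wherever $\sigma_B>0$, which is the only case in which the expression in the lemma is defined), so the bound holds trivially after enlarging the implied constant. This is the same small imprecision the paper itself glosses over by silently restricting to $B\geq B_0$.
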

\begin{proof} By the last part of Lemma~\ref{lem:viv2viol} 
we have 
$\lim_{B}
\Var[\sum_{p} Y_p]
=Q(\b t)$, 
therefore 
there exists $B_0\geq 0$ such that $\Var\l[\sum_{p } Y_p\r]$ is strictly positive for all $B\geq B_0$.
For such $B$ we 
let 
\[Z_p:=
 Y_p
 \Var\l[  \sum_{
{ A <  } 
p\leq {\c F(B)}^{\psi(B)} }Y_p  \r]^{-1/2},
 \]
so that (recall that the $Y_p$ are independent by the first part of Lemma \ref{lem:viv2viol})
\beq
{eq:pizasupataper}
{
\hspace{-0,2cm}
\E\l[\l(
 \frac{ 
  \sum_{
{ A <  } 
p\leq {\c F(B)}^{\psi(B)} }Y_p 
 }{\Var\l[  \sum_{
 { A <  } 
p\leq {\c F(B)}^{\psi(B)} }Y_p  \r]^{1/2}}
\r)^k
\r]
 =
 \sum_{\substack{
1\leq u \leq k \\
(k_1,\ldots,k_u) \in \N^u\\
k_1+\cdots+k_u=k
}}
\frac{k!}{
\prod_{i=1}^u k_i!
}
\Osum_{
}
\prod_{i=1}^u
\E\l[Z_{p_i}^{k_i}  \r]
,} where the sum $\Osum$ is over prime
tuples satisfying 
$A <  p_1<\ldots<p_u \leq  {\c F(B)}^{\psi(B)} $.
 Note that we have $\E\l[Z_p\r]=0$,
therefore we can
add the restriction that every $k_i$ is strictly larger than $1$.
By the bound~\eqref{eq:Dead Congregation - Martyrdoom 2012 }
we deduce  that
there exists $\c L   $ such that 
 for all $B\geq 1$
one has 
$|Z_p|\leq \c L \leq 1+\c L
$. Therefore, for all $k_i\geq 2$
we have 
$$|\E\l[Z_p^{k_i}\r]|
\leq \c (1+\c L)^{k_i-2}   \E\l[Z_p^2\r]
\leq \c (1+\c L)^{k_i}   \E\l[Z_p^2\r]
.$$
Thus, using $k_1+\cdots +k_u=k$, we obtain 
\[\sum_{
 { A <  } 
p_1<\ldots<p_u \leq {\c F(B)}^{\psi(B)} 
}
\prod_{i=1}^u 
\E\l[Z_{p_i}^{k_i}  \r]
  \leq (1+\c L)^k
\sum_{
 { A <  } 
p_1<\ldots<p_u \leq 
{\c F(B)}^{\psi(B)} 
}
\prod_{i=1}^u 
\E\l[Z_{p_i}^{2}  \r]
,\]
which is at most 
\[\frac{(1+\c L)^k}{u!}
\l(
\sum_{
 { A <  } 
p\leq 
{\c F(B)}^{\psi(B)} 
}
 \E\l[Z_{p}^2  \r]
\r)^u
.\] Now note that 
\[
\sum_{
 { A <  } 
p\leq 
{\c F(B)}^{\psi(B)} 
} \E\l[Z_{p}^2  \r]
=\frac{1}{\Var\l[\sum_{
 { A <  } 
p\leq 
{\c F(B)}^{\psi(B)} 
}Y_p\r]}
\sum_{
 { A <  } 
p\leq 
{\c F(B)}^{\psi(B)} 
}
\Var\l[Y_p\r]=1
.\] Thus, by~\eqref{eq:pizasupataper}
we get 
\begin{align*}
&\l|
\E\l[\l(
 \frac{ 
 \sum_{
 { A <  } 
p\leq 
{\c F(B)}^{\psi(B)} 
}Y_p 
 }{\Var\l[  \sum_{
 { A <  } 
p\leq 
{\c F(B)}^{\psi(B)} 
}Y_p  \r]^{1/2}}
\r)^k
\r]
\r|
\\
&
\leq 
(1+\c L)^k
\sum_{1\leq u \leq k/2   }
\frac{ 1}{u!}
\sum_{\substack{(k_1,\ldots,k_u) \in (\N_{\geq 2 })^u\\
k_1+\cdots+k_u=k
}}
\frac{k!}{k_1!\cdots k_u!}
\\
&
\leq 
(1+\c L)^k
k!
\sum_{1\leq u \leq k/2   }
\frac{ k^u }{u!}
\leq 
 (1+\c L)   ^{k }
k!
  \mathrm{e} ^{k }
.\end{align*}
 This concludes the proof.
\end{proof}

\subsubsection{
Conclusion of the proof of Theorem~\ref{thm:mainthmr}
}
\label
{s:prokoviev}

The following is the version of the method of moments we shall be using.
It is the reason we proved  Lemmas \ref{lem:finalzeta}, \ref{lem:to lima twn fakwn12}, and
\ref{Socrates Drank the Conium - Taste of Conium}.
\begin{lemma}
[Billinglsey, {\cite[Thm.~11.2]{MR0466055}}]
\label
{lem:billbill}
Let
$\zeta, \xi_n, \zeta_n, (n \in \N),$ be random variables. Suppose that 
$ \zeta_n\Rightarrow \zeta$ and that
$$\lim_{n\to+\infty} 
\l|
\E\l[\xi_n^k\r]
-
\E\l[\zeta_n^k\r]
\r|=0 \text{ for all }k \in \N \quad \text{and} \quad
\sup_{n,k \in \N}
\frac{\l|  \E\l[\zeta_n^k\r] \r|}{k! L^k}\leq 1 \text{ for some }L\in \R.$$
Then 
$\xi_n \Rightarrow \zeta$.
\end{lemma}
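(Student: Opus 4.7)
The plan is to deduce $\xi_n \Rightarrow \zeta$ via the method of moments, organised in three steps: (i) show that the distribution of $\zeta$ is uniquely determined by its moment sequence; (ii) upgrade weak convergence of $\zeta_n$ to moment convergence, and combine with the hypothesis to get $\E[\xi_n^k] \to \E[\zeta^k]$ for every $k$; and (iii) deduce $\xi_n \Rightarrow \zeta$ from tightness of $(\xi_n)$ plus moment-determinacy.

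For step (i), the second hypothesis gives $|\E[\zeta_n^k]| \leq k!\,L^k$ for every $n,k$. Applied to even $k$, this shows that for any fixed $k$ the family $\{\zeta_n^k\}_n$ is uniformly integrable (its $L^{1+\epsilon}$-norms are uniformly bounded). Therefore, combined with $\zeta_n \Rightarrow \zeta$, we obtain $\E[\zeta_n^k] \to \E[\zeta^k]$, and consequently $|\E[\zeta^k]| \leq k!\,L^k$ as well. This bound is enough to ensure that the moment generating function $\sum_k t^k \E[\zeta^k]/k!$ converges absolutely for $|t| < 1/L$, equivalently that Carleman's condition $\sum_k \E[\zeta^{2k}]^{-1/(2k)} = \infty$ holds (by Stirling, $\E[\zeta^{2k}]^{1/(2k)} \ll k$). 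Hence $\zeta$ is determined by its moments.

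For step (ii), the first hypothesis together with the bound $|\E[\zeta_n^k]| \leq k!\,L^k$ gives that $|\E[\xi_n^k]| \leq 2 k!\,L^k$ for all $n$ sufficiently large (depending on $k$). Combining $\E[\zeta_n^k] \to \E[\zeta^k]$ from step (i) with $|\E[\xi_n^k] - \E[\zeta_n^k]| \to 0$ yields $\E[\xi_n^k] \to \E[\zeta^k]$ for every $k \in \N$.

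For step (iii), the uniform bound on $\E[\xi_n^2]$ gives tightness of $(\xi_n)$ by Chebyshev's inequality, so by Prokhorov every subsequence has a further weakly convergent subsequence $\xi_{n_j} \Rightarrow \eta$. Repeating the uniform-integrability argument of step (i), the moments pass to the limit, giving $\E[\eta^k] = \lim_j \E[\xi_{n_j}^k] = \E[\zeta^k]$ for every $k$. Since $\zeta$ is moment-determined, $\eta \stackrel{d}{=} \zeta$, so every subsequential weak limit equals $\zeta$ in distribution, and thus $\xi_n \Rightarrow \zeta$. The main delicate point is the passage from weak convergence to moment convergence in steps (i) and (iii); this requires the factorial moment bound to guarantee uniform integrability of $\zeta_n^k$ and $\xi_{n_j}^k$, and is the step where the precise form of the hypothesis $\sup_{n,k} |\E[\zeta_n^k]|/(k!\,L^k) \leq 1$ is used essentially.
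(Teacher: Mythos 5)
The paper does not prove this lemma; it is quoted from Billingsley's \emph{The probability theory of additive arithmetic functions}, Theorem~11.2, so there is no in-paper proof to compare against. Your argument is a correct self-contained proof along the standard method-of-moments route: the factorial bound $|\E[\zeta_n^k]|\le k!\,L^k$ gives a uniform $L^2$-bound on $\{\zeta_n^k\}_n$ for each fixed $k$, hence uniform integrability, hence (via Skorokhod/Vitali) $\E[\zeta_n^k]\to\E[\zeta^k]$ and the same bound $|\E[\zeta^k]|\le k!\,L^k$; Carleman's condition then makes $\zeta$ moment-determined; the first hypothesis transfers moment convergence to $\xi_n$; and tightness of $(\xi_n)$ (Chebyshev from the uniform second-moment bound) plus moment-determinacy forces every subsequential weak limit of $\xi_n$ to coincide with $\zeta$. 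Two minor wording issues, neither affecting correctness: uniform integrability of $\{\zeta_n^k\}_n$ comes from the uniform $L^2$-bound $\E[\zeta_n^{2k}]\le(2k)!\,L^{2k}$, not merely a uniform $L^{1+\epsilon}$-bound; and absolute convergence of the formal series $\sum_k t^k\E[\zeta^k]/k!$ for $|t|<1/L$ is not literally ``equivalent'' to Carleman's condition (it does not by itself control $\E[|\zeta|^k]$) --- what you actually use, and correctly compute, is that $\E[\zeta^{2k}]^{1/(2k)}\ll k$, which gives Carleman directly. You could also shorten step~(iii) by invoking the classical method-of-moments theorem once moment convergence $\E[\xi_n^k]\to\E[\zeta^k]$ and moment-determinacy of $\zeta$ are in hand, but re-deriving it via Prokhorov as you do is equally valid.
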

Note that by the fourth part of Lemma~\ref{lem:viv2viol}
the matrix $\boldsymbol \Sigma$ defined in Theorem~\ref{thm:mainthmr}
is positive semi-definite, so
the multivariate normal distribution 
$\c N(\b 0, \boldsymbol \Sigma)$
is well-defined.
Let $\b X$ be a random vector in $\R^n$
with distribution 
$\c N(\b 0, \boldsymbol \Sigma)$.
Using the Cram\'er--Wold theorem~\cite[Thm.~29.4]{MR1700749}
we see that the convergence of the sequence
\eqref{eq:thelimit} to a multivariate normal distribution
 follows if we show that for every $\b t \in \R^n$ 
one has 
\[
\sum_{i=1}^n t_i 
\frac{\l(\sum_{p\mid m_i(a)} \theta_i(p)  \r)-\c M_i(m_i(a))}{\c V_i(m_i(a))}
\Rightarrow 
\sum_{i=1}^n t_i X_i
.\]
In light of Lemma~\ref
{lem:albinionioboe7}
this is equivalent to proving \[
\sum_{i=1}^n t_i 
\frac{\l(\sum_{
 { A <  } 
p\mid m_i(a), p\leq 
\c F(B)^{\psi(B)}
} \theta_i(p)  \r)-\c M_i(\c F(B)^{\psi(B)})}{\c V_i(\c F(B)^{\psi(B)})  }
\Rightarrow 
\sum_{i=1}^n t_i X_i
.\]
This can be deduced by
injecting
Lemmas~\ref{lem:finalzeta},
~\ref{lem:to lima twn fakwn12}
and~\ref{Socrates Drank the Conium - Taste of Conium}
 into
Lemma~\ref{lem:billbill}. \qed

\subsection{
The proof       of 
Theorem~\ref{thm:mainthmrqw}
}
\label
{s:gamwtoxristosoueasy}

The proof is a
combination of the Fundamental Lemma of the Combinatorial Sieve
and 
Theorem~\ref{thm:mainthmr}.
As a first step we show that the function $\c F$ tends to infinity.
\begin{lemma}
\label
{lem:howtouseprimes}
In the setting of Theorem~\ref{thm:mainthmrqw}
we have 
$\lim_{B\to \infty}
\c F(B)=+\infty$. 
\end{lemma}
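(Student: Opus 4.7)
The plan is to argue by contradiction. Suppose $\mathcal{F}(B)$ does not tend to infinity; then there exist a constant $C \geq A$ and a sequence $B_k \to \infty$ with $\mathcal{F}(B_k) \leq C$ for every $k$. I will show that this forces the density function $g_i$ to be supported on primes $p \leq C$, which contradicts the divergence $\sum_{p \leq T} g_i(p) = c_i \log \log T + O(1)$ imposed by \eqref{eq:asumptnqwe}, given that $c_i > 0$.

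For the first step, fix an index $i$ and a prime $p > C$. If $p \mid m_i(a)$, then $m_i(a) \geq p > C \geq \mathcal{F}(B_k)$, and in particular $\max_j m_j(a) > \mathcal{F}(B_k)$. Combining this with \eqref{def:northcot2hgf8sqwe} yields
\[
\mathbf{P}_{B_k}[p \mid m_i(a)] \;\leq\; \mathbf{P}_{B_k}\bigl[\max_j m_j(a) > \mathcal{F}(B_k)\bigr] \;\longrightarrow\; 0 \quad (k \to \infty).
\]

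The second step is to invoke \eqref{def:gdensityqwe} applied to the $n$-tuple having $p$ in the $i$th slot and $1$ elsewhere: this asserts that the \emph{full} limit $\lim_{B \to \infty} \mathbf{P}_B[p \mid m_i(a)] = g_i(p)$ exists. Evaluating along the subsequence $B_k$ then forces $g_i(p) = 0$ for every prime $p > C$. Consequently, for all $T > C$ we have $\sum_{p \leq T} g_i(p) = \sum_{p \leq C} g_i(p)$, a quantity bounded independently of $T$.

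This directly contradicts \eqref{eq:asumptnqwe}, which forces the same sum to behave like $c_i \log \log T + c'_i + O(1/\log T)$ and hence to diverge as $T \to \infty$, since $c_i > 0$. I do not anticipate any substantial obstacle; the only point requiring attention is that the existence of the limit in \eqref{def:gdensityqwe} (rather than merely a $\liminf$ or $\limsup$) is what allows one to upgrade the subsequential estimate $\mathbf{P}_{B_k}[p \mid m_i(a)] \to 0$ into the pointwise vanishing $g_i(p) = 0$.
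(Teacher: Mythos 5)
Your proof is correct and is essentially the contrapositive of the paper's own argument: both rest on the observation that \eqref{def:northcot2hgf8sqwe} together with the existence of the limit in \eqref{def:gdensityqwe} forces $g_i(p)=0$ for any prime exceeding $\liminf_B \mathcal{F}(B)$, which then contradicts the divergence of $\sum_{p\le T} g_i(p)$ in \eqref{eq:asumptnqwe}. The paper phrases this directly (each prime with $g_i(p)>0$ satisfies $p\le \liminf_B \mathcal{F}(B)$, then lets $p\to\infty$), whereas you argue by contradiction along a bounded subsequence, but the ingredients and logical chain are identical.
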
\begin{proof}
Let $1 \leq i \leq n$.
By~\eqref{eq:asumptnqwe}
there are infinitely many primes $p$ with $g_i(p) >0$.
Let $p$ be such a prime. By~\eqref{def:gdensityqwe}
with $ \b d=(
p
\mathds 1 _{\{i\}}(j)
+
\mathds 1 _{\{1,\ldots, n \}\setminus \{i\}}(j))_{j=1}^n $
we get 
\[
\liminf_{B\to+\infty} 
 \frac{\#\l\{
a\in \Omega: h(a) \leq B,
 p\leq m_i (a) 
\r\}}{
N(B)
} >0
.\] Thus, by~\eqref{def:northcot2hgf8sqwe}
we obtain $ p\leq \liminf_{B\to+\infty} \c F(B)$. Taking $p \to \infty$
concludes the proof.
\end{proof}
Before proceeding  
we must show 
that 
  the typical size of $ m_i(a)$ is not
too small. This 
  will require the 
Fundamental Lemma of the Combinatorial Sieve as given 
in~\cite[Thm.~3, p.~60]{MR1342300}.  
\begin{lemma} 
\label 
{lem:upperboundsieve}
Let $\c A$ be a finite 
set of integers and $\mathfrak{P}$ a set of primes.
If there exist   real numbers $A,\kappa>0$ 
and a multiplicative function $g:\N \to \R \cap [0,\infty)  $ such that 
\beq
{eq:brunsassumption}
{
\prod_{\eta \leq p \leq \xi } 
\l(1-\frac{g(p) 
}{p}
\r)^{-1}
\leq
\l(\frac{\log \xi}{\log \eta} \r)^\kappa
\l( 1+\frac{A}{\log \eta }\r), \,\,
(\forall \, 2\leq \eta  \leq \xi ),  
}
then
for all $X,y,u>1$
 the number of 
 $a\in \c A$ that are coprime to every   $p 
\in \mathfrak P
\cap
(0,y ]
$
is \[
\ll X
 \prod_{\substack{ p\in \mathfrak P\\ 
p \leq y 
  }}
\l(1-\frac{g(p)}{p}\r)
+
\sum_{\substack{ d \leq y^u
\\ 
p\mid d \Rightarrow p\in \mathfrak P}}
\mu(d)^2
 \l|
\#\l\{a\in \c A: d\mid a 
 \r\}
-
\frac{g(d)}{d} X
\r|
,\] where
the implied constant depends at most on $\kappa$ and $A$.
\end{lemma}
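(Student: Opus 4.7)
The plan is to deduce this directly from the classical Fundamental Lemma of the combinatorial sieve, exactly as stated in~\cite[Thm.~3, p.~60]{MR1342300}. The hypothesis~\eqref{eq:brunsassumption} is precisely the $\kappa$-dimensional Mertens-type condition (often called $(\Omega_1(\kappa))$ in the literature) on the local density function $g$, which is the standard input for combinatorial sieves. So the work consists mainly of rewriting the problem in the standard sieve-theoretic language and then quoting the reference.

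Concretely, I would set $P(y) = \prod_{p \in \mathfrak{P},\,p \leq y} p$ and observe that the quantity to be estimated is the usual sifting function
\[
S(\c A, \mathfrak{P}, y) := \#\{a \in \c A : \gcd(a, P(y)) = 1\}.
\]
Applying upper-bound combinatorial weights $\lambda_d^+$ (of Rosser--Iwaniec or Brun type) that are supported on squarefree $d \mid P(y)$ with $d \leq y^u$, satisfy $|\lambda_d^+| \leq 1$, and obey $\sum_{d \mid n} \lambda_d^+ \geq \mathds 1(n=1)$ for every $n \mid P(y)$, and then swapping the order of summation, one obtains
\[
S(\c A, \mathfrak{P}, y) \leq X \sum_{\substack{d \mid P(y)\\ d \leq y^u}} \lambda_d^+ \frac{g(d)}{d} + \sum_{\substack{d \mid P(y)\\ d \leq y^u}} |\lambda_d^+| \cdot \l|\#\{a \in \c A : d \mid a\} - \frac{g(d)}{d} X\r|.
\]
The error term here already matches the claimed second summand, since $|\lambda_d^+| \leq 1 = \mu(d)^2$ on squarefree integers $d$ with $p \mid d \Rightarrow p \in \mathfrak{P}$. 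It remains to bound the main term by $\ll X \prod_{p \in \mathfrak{P},\,p \leq y} (1 - g(p)/p)$.

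The principal technical obstacle—the one actually carried out in the reference—is precisely this main-term estimate: under~\eqref{eq:brunsassumption} and for $u > 1$, the Fundamental Lemma yields $\sum_{d \leq y^u} \lambda_d^+ g(d)/d \ll \prod_{p \in \mathfrak{P},\,p \leq y}(1 - g(p)/p)$ with an implied constant depending only on $\kappa$ and $A$. This rests on a delicate combinatorial analysis of the weights $\lambda_d^+$ combined with the Mertens-type product bound forced by~\eqref{eq:brunsassumption}. Since the hypotheses on $g$ match the $(\Omega_1(\kappa))$ condition of~\cite{MR1342300} verbatim and the conclusion is the standard $\kappa$-dimensional upper bound with error truncated at level $y^u$, no further analysis is required beyond recording the translation of notation.
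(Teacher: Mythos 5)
Your approach is exactly what the paper does: the paper gives no independent proof of this lemma and simply records it as a restatement of the Fundamental Lemma of the combinatorial sieve, citing~\cite[Thm.~3, p.~60]{MR1342300}. Your additional exposition of the upper-bound sieve weights $\lambda_d^+$ and the resulting main-term/remainder-term split is accurate and correctly identifies where the hypothesis~\eqref{eq:brunsassumption} enters (the Mertens-type $\kappa$-dimensional density condition), so the proposal is a faithful and slightly more expository version of the paper's own justification.
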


\begin{lemma}
\label
{lem:undertheasu}
Let  
  $\epsilon$ be as in~\eqref{def:northcot2hgf8sqwerwe5we}
and let $
\epsilon_1:[1,\infty)
\to
[0,\infty)$ satisfy
\beq{eq:kanothxari}
{
\lim_{B\to \infty} 
\epsilon_1(B)
=
0.
}
Define 
$
z_0(B):=
\c F(B)^{  \epsilon_1(B)  \epsilon(B)    }
$
and assume that $
\lim_{B\to \infty} 
z_0(B)
=
\infty$.  
Then 
\[\lim_{B\to \infty} 
\b P_B\l[a\in \Omega:
z_0(B)
\geq 
\min_{1\leq i \leq n}
m_i(a)
\r]
=0
.\]
\end{lemma}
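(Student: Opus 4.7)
The plan is to apply the Fundamental Lemma of the Combinatorial Sieve (Lemma~\ref{lem:upperboundsieve}), combined with a union bound over the $n$ coordinates. Since $n$ is finite, it suffices to prove that $\b P_B[m_i(a)\leq z_0(B)]\to 0$ for each fixed $i\in\{1,\ldots,n\}$. The key observation is that if $m_i(a)\leq z_0(B)$ then every prime factor of $m_i(a)$ is at most $z_0(B)$, so for \emph{any} $z_1(B)>z_0(B)$ the integer $m_i(a)$ is automatically coprime to every prime in the window $(z_0(B),z_1(B)]$. Thus the set we wish to bound is contained in a set controllable by a sieve over that window.

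I would choose $z_1(B):=\c F(B)^{\epsilon(B)/2}$ and sieve level $u=2$, so that $z_1(B)^{u}=\c F(B)^{\epsilon(B)}$, exactly matching the range permitted by the level-of-distribution hypothesis~\eqref{eq:thebigassumptnqwe}. Since $\epsilon_1(B)\to 0$ one has $z_0(B)<z_1(B)$ for large $B$, and since $z_0(B)\to\infty$ all sieve primes eventually exceed $A$, so the definition of $g_i$ is valid throughout. The sieve hypothesis~\eqref{eq:brunsassumption}, with density $g(d)=d\,g_i(d)$ and dimension $\kappa=c_i$, would be verified by expanding $-\log(1-g_i(p))=g_i(p)+O(g_i(p)^2)$ and applying both parts of~\eqref{eq:asumptnqwe} to the linear and quadratic sums; this yields
\[
\prod_{\eta\leq p\leq\xi}\bigl(1-g_i(p)\bigr)^{-1}\leq\Bigl(\frac{\log\xi}{\log\eta}\Bigr)^{c_i}\!\Bigl(1+\frac{C}{\log\eta}\Bigr)
\]
for some absolute constant $C$.

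With these choices, Lemma~\ref{lem:upperboundsieve} bounds the number of $a$ with $h(a)\leq B$ and $m_i(a)$ coprime to every prime in $(z_0(B),z_1(B)]$ by a main term
\[
\ll N(B)\!\!\!\prod_{z_0(B)<p\leq z_1(B)}\!\!\!(1-g_i(p))\ll N(B)\Bigl(\tfrac{\log z_0(B)}{\log z_1(B)}\Bigr)^{c_i}=N(B)(2\epsilon_1(B))^{c_i}=o(N(B)),
\]
together with an error term of the form $\sum_{d}\mu(d)^2|\c R(\b d^{(i)},B)|$, where $d$ ranges over square-free integers at most $\c F(B)^{\epsilon(B)}$ with prime factors above $A$, and $\b d^{(i)}$ denotes the $n$-tuple with $d$ in position $i$ and $1$ elsewhere. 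Since the support of this sum lies within the set indexing~\eqref{eq:thebigassumptnqwe} (with the other $n-1$ coordinates equal to $1$), the hypothesis gives an error of $o(N(B))$ for any $\gamma>0$. The main obstacle — and the reason for the careful tuning of $z_1(B)$ and $u$ — is matching the sieve's error range to the available level of distribution; the rest is the standard interplay between the Mertens-type product estimate and the fundamental lemma of the sieve.
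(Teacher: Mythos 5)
Your proposal is correct and follows essentially the same route as the paper's proof: reduce to a single coordinate by a union bound, observe that $m_i(a)\leq z_0(B)$ forces coprimality to all primes in the window $(z_0(B),\mathcal F(B)^{\epsilon(B)/2}]$, apply the Fundamental Lemma (Lemma~\ref{lem:upperboundsieve}) with the same choices $g(d)=d\,g_i(d)$, $\kappa=c_i$, $u=2$, verify~\eqref{eq:brunsassumption} from~\eqref{eq:asumptnqwe} via $-\log(1-g_i(p))=g_i(p)+O(g_i(p)^2)$, and invoke~\eqref{eq:thebigassumptnqwe} for the remainder. The only cosmetic difference is that you keep the explicit factor $(2\epsilon_1(B))^{c_i}$ where the paper absorbs the $2^{c_i}$ into the implied constant.
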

\begin{proof}
By Boole's inequality it is sufficient to show that for all ${1\leq i \leq n}$
one has \beq
{eq:sigatikitrela}
{
\lim_{B\to \infty} 
\b P_B\l[a\in \Omega:
z_0(B)
\geq 
 m_i(a)
\r]
=0
.} 
Let 
$
z(B):=
\c F(B)^{  \epsilon(B)/2 }
$.
To prove~\eqref{eq:sigatikitrela}
 we 
note
 that the inequality 
$z_0(B)
\geq 
 m_i(a)$
implies that for every prime $p \in (z_0(B), z(B) ) $ 
we have 
$p\nmid m_i(a)$.
Therefore, letting $W:=\prod_{z_0(B)< p <  z(B)  }p$, 
we get 
\[
\#\{ 
 a\in \Omega:
 h(a)\leq B  ,
z_0(B) \geq   m_i(a)
\}
\leq  \#\{ a\in \Omega, h(a)\leq B ,
\gcd(m_i(a), W)=1
\}
 .\]
 We   use 
Lemma~\ref{lem:upperboundsieve}
with 
$
\c A:= \{m_i(a): a\in \Omega, h(a) \leq B\}$ and 
$$\mathfrak{P}:=\{p> z_0(B): p \text{ prime}  \}, 
X:= N(B),
\kappa:=c_i,
g(p):=p 
g_i(p),
u :=2,
y:=z(B)-1.
  $$  
Assumption~\eqref{eq:asumptnqwe}
and the estimate $\log (1-z)^{-1} =z+O(z^2), |z|<1$ 
show that 
\begin{equation}
\label
{eq:carbomb_centralia}
\log 
\prod_{\eta\leq p \leq \xi} (1-g_i(p) )^{-1} 
=
\sum_{\eta\leq p \leq \xi}
g_i(p)
+O\l(
\sum_{\eta\leq p \leq \xi}
g_i(p)^2
\r)
=c_i \log \l(\frac{ \log \xi }{\log \eta} \r)
+O\l(\frac{1}{\log \eta} \r)
,\end{equation} 
from which we infer~\eqref{eq:brunsassumption} 
by using $\exp(\epsilon)= 1+O(\epsilon) $
 for   $\epsilon=O( 1/\log \eta)$.  
Lemma~\ref{lem:upperboundsieve} gives the following 
bound  for 
$\#\{ a\in \Omega, h(a)\leq B ,
\gcd(m_i(a), W)=1
\}$,
\[
 \ll 
N(B)
 \prod_{
z_0(B)< p  \leq z(B) 
}
 (
1-
g_i (p) 
)
+ \hspace{-5pt}
\sum_{\substack{ 
d \leq z(B)^2
\\ 
 p\mid d \Rightarrow p> z_0(B)   
 }}
 \hspace{-5pt}
\mu(d)^2
 \l|
\#\l\{a\in \Omega: h(a) \leq B, 
d\mid m_i(a) 
 \r\}
-
g_i(d)
N(B)
\r|
.\] 
Exponentiating~\eqref{eq:carbomb_centralia}
shows that 
the first term is 
\[\ll
N(B)
\l( \frac{\log z_0(B)} {\log z(B)}\r)^{c_i } 
\ll
N(B)
\epsilon_1(B)^{c_i } 
=o(N(B) ).\]
To bound the second term 
we note that $z_0(B)>A$ for $B$ sufficiently large due to the assumption $\lim_{B\to \infty} 
z_0(B)
=
\infty$.
Therefore, we can replace the condition $p>z_0(B)$ by $p>A$, thus, 
 the second term is 
 \[\ll  
\sum_{\substack{ 
d \leq z(B)^2
\\ 
 p\mid d \Rightarrow p> A 
 }}
\mu(d)^2 
|\c R((1,\ldots , 1, d , 1 \ldots, 1), B) |
,\] 
where every component in 
the vector in $\c R$ 
equals $1$, except the $i$-th 
entry, which equals $d$.  By~\eqref{eq:thebigassumptnqwe}
 and Lemma~\ref{lem:howtouseprimes}
we 
immediately
find that this is 
$  o(N(B))$.  
This verifies~\eqref{eq:sigatikitrela}
and hence
concludes the proof.
 \end{proof}
We will use 
Theorem~\ref{thm:mainthmr}
with
\[
\chi_B(a):= \mathds 1_{[0,B]}(h(a)), \,
\theta_i(p)=1
\text{ and }
M(B)=N(B)
.\]
 With these choices we
see that~\eqref{def:northcot2},
~\eqref{def:northcot3}
and~\eqref{def:mmde}
are satisfied due to~\eqref{def:northcot2qwe}.
The assumption~\eqref{assumption:thet}
obviously
holds with $\Theta=1$. We next show that $(\c F, \psi)$ fulfils
the truncation-pair
Definition~\ref{def:tranc},
where $\c F$ is
as in~\eqref{def:northcot2hgf8sqwe} and 
\beq
{defepsqwr}
{\psi(B):=
\frac{\epsilon(B)}{  \sqrt{   \log \log \log \c F(B) } }
= \frac{\sqrt{   \log \log \log \c F(B) } }{  \sqrt{   \log \log \c F(B) } }
,}
where the equality is by  \eqref{def:northcot2hgf8sqwerwe5we}.
Firstly,~\eqref{def:northcot2hgf8s}
follows directly from~\eqref{def:northcot2hgf8sqwe}.
Secondly, to verify~\eqref{eq:Flots du Danube}
it is clearly sufficient to show that 
\[
\lim_{B\to+\infty}\psi(B)
\log  \c F(B)=+\infty
.\] This, however, follows by~\eqref{defepsqwr}
and
Lemma~\ref{lem:howtouseprimes}.

Before proceeding,  note that 
by~\eqref{eq:asumptnqwe}
we have 
\begin{align*}
\c M_i(\c F(B)^{\psi(B)})
=&
c_i \log \log \c F(B)
-c_i \log \frac{1}{\psi(B)}
+O(1)
 \\
=&
c_i \log \log \c F(B)
+O(\log \log  \log \c F(B) ),
\end{align*}
where the last estimate is due to
Lemma~\ref{lem:howtouseprimes}
and~\eqref{defepsqwr}.
We similarly have 
 $$\c V_i(\c F(B)^{\psi(B)})^2 =
c_i \log \log \c F(B)
+O(\log \log  \log \c F(B) ).$$ 
The first part of 
\eqref
{eq:Albioni - Oboe Concerto in D minor, op.9 no.2
1.  I  Allegro e non presto}
follows from
\[
\frac{1  }{\psi(B)
\c V_i(\c F(B)^{\psi(B)})
}
 \ll
\frac{1}{\psi(B) \sqrt{ \log \log \c F(B)  } }
=
\frac{ 1 }{ \sqrt{ \log \log \log \c F(B)  } }
=o(1)
,\] which goes to $0$ as $B\to \infty$ by
Lemma~\ref{lem:howtouseprimes}.
To verify the
second   part of 
\eqref
{eq:Albioni - Oboe Concerto in D minor, op.9 no.2
1.  I  Allegro e non presto}
we use 
Lemma~\ref{lem:undertheasu}
with 
\[
\epsilon_1(B)=
  (  \log \log \log \c F(B) )^{-1/2} 
.\] This choice shows that the function $z_0(B)$ of 
Lemma~\ref{lem:undertheasu} 
coincides with $\c F(B)^{\psi(B)}$. By Lemma~\ref{lem:undertheasu}
we deduce that 
there exists 
a set  $S\subset \Omega$ 
with
$\lim_{B\to \infty} 
\b P_B[S] =1$
and such that whenever 
  $a\in  S$ then we have  
for all $1\leq i \leq n $ that 
\beq
{eq:sismosneamakri}
{
\c F(B)^{\psi(B)}
\leq  
 m_i(a)
\leq 
\c F(B)
.}
Hence, for $a \in S$ we get 
\begin{align*}
0
&\leq 
\frac
{\c M_i(m_i(a))-\c M_i(\c F(B)^{\psi(B)}  )  }
{
\c V_i(\c F(B)^{\psi(B)})
}
\leq 
\frac
{\c M_i(\c F(B) )-\c M_i(\c F(B)^{\psi(B)}  )  }
{
\c V_i(\c F(B)^{\psi(B)})
}
\\
&=
\frac
{c_i (\log \log \c F(B) ) +O(1) -c_i (\log \log \c F(B) ) +O(\log \log \log \c F(B)) }
{
\sqrt{c_i  (\log \log \c F(B) )+O(1) }
},
\end{align*}
which, by Lemma~\ref{lem:howtouseprimes},
tends to $0$ as $B\to\infty$.
We are thus left with verifying
~\eqref{eq:assumptionalbinioni}
and~\eqref{eq:thebigassumptn}.
For the former we observe that  
for $a \in S$ we have the following by~\eqref{eq:sismosneamakri},
\[
1
\leq 
\frac{ \c V_i(  m_i(a) ) }{\c V_i(\c F(B)^{\psi (B) }  )}
\leq 
\frac{ \c V_i( \c F(B)  )  }{\c V_i(\c F(B)^{\psi (B) }  )}
=\frac{ \sqrt{ c_i  \log \log \c F(B)+O(1)} }{\sqrt{c_i \log \log \c F(B)  + O(\log \log \log \c F(B))}}
 ,\] which, by Lemma~\ref{lem:howtouseprimes}, tends to $1$ as $B\to \infty$.
We are left with verifying~\eqref{eq:thebigassumptn}. 
Note that 
in our setting one has 
$\c R((1,\ldots,1),B) =0$ due to $M(B)=N(B)$
and~\eqref{def:levelofdistribvcfdgtqwe}, 
hence we only have to show 
\beq
{eq:yngwqerty}
{ \lim_{B\to+\infty}
 \frac{\prod_{i=1}^n \l(1+
|\c M_i(\c F(B)^{\psi(B)})|
   ^{k_i}\r)}{N(B)\prod_{i=1}^n
\c V_i(\c F(B)^{\psi(B)})^{k_i}
}
   \sum_{\substack{
\b d \in \N^n
\\
\!
\!
\eqref{eq:manzacafe}
}} 
|\c R(\b d,B)|
  =0
}
in order to
verify~\eqref{eq:thebigassumptn}.  
Note that the bounds 
\[
\c M_i(\c F(B)^{\psi(B)})
\ll
\log \log \c F(B)
\text{ and }
\c V_i(\c F(B)^{\psi(B)})^2 
\gg 
 \log \log \c F(B)
\]
imply that 
\[ \frac{\prod_{i=1}^n
\l(
|\c M_i(\c F(B)^{\psi(B)})|
  \r)^{k_i}}{ \prod_{i=1}^n
\c V_i(\c F(B)^{\psi(B)})^{k_i}
}
\ll 
(\log \log \c F(B) )^{\frac{k_1+\cdots +k_n}{2}} 
.\]
Now note that the $d_i$
in~\eqref{eq:yngwqerty}
satisfy for all sufficiently large 
$B$,
\[d_i \leq 
\c F(B)^{k_i \psi(B)}
\leq 
\c F(B)^{\epsilon(B)}
,\]
therefore,
\begin{align*}
& \frac{\prod_{i=1}^n
\l(
|\c M_i(\c F(B)^{\psi(B)})|
 +1 \r)^{k_i}}{N(B)\prod_{i=1}^n
\c V_i(\c F(B)^{\psi(B)})^{k_i}
}
   \sum_{\substack{
\b d \in \N^n
\\
\!
\!
\eqref{eq:manzacafe}
}} 
|\c R(\b d,B)|
 \\
\ll
&\frac{(\log \log \c F(B) )^{\frac{k_1+\cdots +k_n}{2}}
}{N(B)}
  \sum_{\substack{
\b d \in \N^n
\\
|\b d |\leq   \c F(B) ^{\epsilon(B)}
\\
p\mid d_1 \cdots d_n 
\Rightarrow p>A
}}
\mu(d_1)^2
\cdots
\mu(d_n)^2 
|\c R(\b d,B)|
,\end{align*}
which is $o(1)$ as can be seen by taking 
$\gamma=1+\frac{1}{2}(k_1+\cdots +k_n)$
in~\eqref{eq:thebigassumptnqwe}.
This confirms~\eqref{eq:yngwqerty}.
 Having verified all assumptions of Theorem~\ref{thm:mainthmr}, 
the result is that the random vector
 \[
\l(
\frac{\omega(m_1(a)) -\sum_{p \leq m_1(a)} g_1(p)   }
{\l( \sum_{p \leq m_1(a)} g_1(p)  (1-g_1(p))\r)^{1/2} }
,\ldots,
\frac{\omega(m_n(a))-\sum_{p \leq m_n(a)} g_n(p)   }
{\l ( 
\sum_{p \leq m_n(a)} g_n(p)  (1-g_n(p))   \r)^{1/2} }
\r)
\]
has the 
limiting distribution as in Theorem~\ref{thm:mainthmrqw}. 
We next  deduce  the analogous distribution
result for the   function given in~\eqref{defeqkqwe}. We  define the random vectors on $\Omega$ by 
\begin{equation*}
    \mathbf{V} = \left( \frac{\mathcal{V}(m_1(a))}{\sqrt{c_1 \log \log \mathcal{F}(B)}}, \ldots,   \frac{\mathcal{V}(m_n(a))}{\sqrt{c_n \log \log \mathcal{F}(B)}} \right), 
\end{equation*}
\begin{equation*}
    \mathbf{T} = \left( \frac{\omega(m_1(a)) - \mathcal{M}(m_1(a))}{\mathcal{V}(m_1(a))}, \ldots,    \frac{\omega(m_n(a)) - \mathcal{M}(m_n(a))}{\mathcal{V}(m_n(a))} \right),
\end{equation*}
and
\begin{equation*}
    \mathbf{M} = \left( \frac{\mathcal{M}(m_1(a)) - c_1 \log \log \mathcal{F}(B)}{\mathcal{V}(m_1(a))}, \ldots,   \frac{\mathcal{M}(m_n(a)) - c_n \log \log \mathcal{F}(B)}{\mathcal{V}(m_n(a))} \right).
\end{equation*}
Recalling \eqref{defeqkqwe}, we  have 
\begin{equation*} 
\mathbf{K} = \mathbf{V} (\mathbf{T} + \mathbf{M})
\end{equation*}
where the product is taken coordinate-wise.
\autoref{thm:mainthmr} implies that $\mathbf{T} \Rightarrow \mathcal{N}(\mathbf{0}, \Sigma)$. Moreover by \eqref{eq:asumptnqwe}, the fact that $\mathcal{F}(B) \to \infty$ and \eqref{eq:sismosneamakri}, we have $\mathbf{M} \Rightarrow \mathbf{0}$ and $\mathbf{V} \Rightarrow \mathbf{1}$ (where $\mathbf{1}$ is the $n$-dimensional vector all of whose coordinates are $1$).
Slutsky's theorem therefore implies that $\mathbf{K} \Rightarrow \mathcal{N}(\mathbf{0}, \Sigma)$.
Furthermore, the limit in~\eqref{A. VIVALDI:Filiae maestae Jerusalem RV 638}
becomes~\eqref{eqn:gij}. This is due to~\eqref{eq:asumptnqwe},
which ensures that  
\[
\sum_{p\leq T
}
\theta_i(p) \theta_j(p)
 g_i(p) g_j(p)  =
\sum_{p\leq T
} 
 g_i(p) g_j(p)  
\leq \l( \sum_{ p_1,p_2 \leq T}
g_i(p_1)^2
g_j(p_2)^2  
\r)^{1/2}
=O(1),
 \] 
by Cauchy--Schwarz, and 
\begin{equation} \tag*{\qed}
\c V_i(T)^2= 
\sum_{p\leq  T}   
 g_i(p)   \l(1-g_i(p)    \r) 
=
\sum_{p\leq T} g_i(p)
+O(1). 
\end{equation}

\section{Application to integral points} \label{sec:application}
In this section we prove Theorem \ref{thm:main} using Theorem \ref{thm:mainthmrqw}.
\subsection{Inner product of divisors}\label{sec:inner_product}
Let $X$ be an integral Noetherian scheme and $\Div X$ the free abelian group generated by the integral (Weil) divisors on $X$.
To simplify some of the statements and proofs in what follows, we introduce an inner product on $\langle \cdot, \cdot \rangle$ on $\Div X$ as follows. For integral divisors $D,E$ we define
$$\langle D, E \rangle = 
\begin{cases}
	1, \quad D = E, \\
	0, \quad D \neq E.
\end{cases}$$
As the integral divisors form a basis of $\Div X$, this extends to an inner product on $\Div X$. Explicitly $\langle D, E \rangle$ is the number of common irreducible components of $D, E \in \Div X$ counted with multiplicity. This extends to $\Div_\RR X := (\Div X) \otimes_\ZZ \RR$ and we let $\| \cdot \|: \Div_\RR X \to \RR_{\geq 0}$ be the induced norm. Our inner product is a convenient piece of notation which should not be confused with more subtle geometric information like intersection numbers of divisors.

\subsection{Points over finite fields}

In the statement, we implicitly only sum over those $p$ with $\mathcal{X}(\F_p) \neq \emptyset$. (A similar convention applies to Corollary \ref{cor:D_i}.)

\begin{proposition} \label{prop:Serre}
	Let $X$ be a geometrically integral variety over $\QQ$ of 
	dimension $n$ and
	$\mathcal{X}$ a model of $X$ over $\ZZ$.
	Let $Z \subsetneq X$ be a reduced  closed subscheme of pure dimension
	$d$ and $\mathcal{Z}$ its closure in $\mathcal{X}$. 
	\begin{enumerate}
	\item We have
	$$\sum_{p \geq T} \left(\frac{\#\mathcal{Z}(\FF_p)}{\# \mathcal{X}(\FF_p)} \right)^2 \ll \frac{1}{T }.$$
	\item We have
	$$\sum_{p \leq T} \frac{\#\mathcal{Z}(\FF_p)}{\# \mathcal{X}(\FF_p)} =
	\begin{cases}
		C_Z + O(1/T),& \mbox{if } d < n - 1, \\
		\langle Z , Z \rangle \log \log T + C'_Z + O(1/\log T), & \mbox{if } d = n-1.
	\end{cases}$$
	for some $C_Z>0$ and $ C_Z' \in \mathbb R$.
	\end{enumerate}
\end{proposition}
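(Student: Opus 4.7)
The plan is to reduce both parts to Lang--Weil estimates, and, for the divisor case of part~(ii), to an effective Chebotarev--Mertens argument. For all $p$ sufficiently large (so in particular for all $p$ such that $\mathcal{X}(\FF_p)\neq\emptyset$, the only primes over which we sum), Lang--Weil yields $\#\mathcal{X}(\FF_p) = p^n + O(p^{n-1/2})$ since $X$ is geometrically integral of dimension $n$; and $\#\mathcal{Z}(\FF_p) \ll p^d$ since $Z$ has pure dimension $d$. Combining, $\#\mathcal{Z}(\FF_p)/\#\mathcal{X}(\FF_p) \ll p^{d-n} \leq p^{-1}$. Part~(i) then follows by squaring and comparing with $\sum_{n\geq T} n^{-2} \ll T^{-1}$. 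When $d<n-1$ the ratio is in fact $\ll p^{-2}$, so the series in part~(ii) converges absolutely to some $C_Z$ with tail $O(1/T)$, settling that first case.

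The main case is $d=n-1$. Decompose $Z=Z_1\cup\dots\cup Z_c$ into its $\QQ$-irreducible components, so that $c=\langle Z,Z\rangle$. Over $\overline{\QQ}$, each $Z_i$ decomposes as a single Galois orbit of geometrically integral divisors $Z_{i,1},\dots,Z_{i,m_i}$. Applying Lang--Weil to each $Z_{i,j}$ and noting that pairwise intersections $Z_i\cap Z_j$ (and singular loci) have dimension $\leq n-2$ and therefore contribute at most $O(p^{n-2})$, one gets
\[
\#\mathcal{Z}(\FF_p) = \Bigl(\sum_{i=1}^{c} a_i(p)\Bigr)\, p^{n-1} + O(p^{n-3/2}),
\]
where $a_i(p)$ counts the geometric components of $Z_i$ fixed by $\Frob_p$. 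Dividing by $\#\mathcal{X}(\FF_p)$ and summing over $p\leq T$ reduces the claim to evaluating $\sum_{p\leq T}a_i(p)/p$ for each $i$; the tail contributions from $\sum_p p^{-3/2}$ and related error terms are bounded as $T\to\infty$ and get absorbed into the constant $C'_Z$.

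For the remaining evaluation, let $K_i/\QQ$ be a Galois extension with group $G_i$ splitting all components of $Z_i$, on which $G_i$ acts transitively. By Burnside's lemma, the average over $G_i$ of the number of fixed components equals the number of orbits, which is $1$. Combining an effective Chebotarev density theorem for $K_i/\QQ$ with partial summation (a Mertens-type refinement) then gives
\[
\sum_{p\leq T} \frac{a_i(p)}{p} = \log\log T + C_i + O\!\left(\tfrac{1}{\log T}\right).
\]
Summing over $i=1,\dots,c$ produces the leading term $c\log\log T = \langle Z,Z\rangle\log\log T$ and assembles $C'_Z$. The main technical obstacle is securing the $O(1/\log T)$ error rather than the weaker $o(1)$ bound: this requires a Mertens-type refinement of Chebotarev beyond its bare density statement, but is standard and follows from the analytic properties (non-vanishing on $\Re s \geq 1$ together with standard zero-free regions) of the Artin/Dedekind $L$-functions attached to the permutation representation of $G_i$ on the components of $Z_i$.
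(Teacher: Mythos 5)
Your proposal is correct and the geometric heart is identical to the paper's: reduce everything to Lang--Weil, observe that when $d=n-1$ the count $\#\mathcal{Z}(\FF_p)$ is governed by the number of geometrically integral components of $\mathcal{Z}_{\FF_p}$, and extract the $\log\log T$ from an analytic prime-counting input. Where you diverge, mildly, is in how that analytic input is packaged. For each $\QQ$-irreducible component the paper introduces the number field $k_i$ (algebraic closure of $\QQ$ in the function field), observes that your $a_i(p)$ equals $z_p(k_i)$, the number of degree-one primes of $k_i$ above $p$, and then collapses $\sum_{p\leq T}z_p(k_i)$ to the count of all prime ideals of norm $\leq T$ (modulo an $O(T^{1/2})$ correction for higher-degree primes); the prime ideal theorem with its classical error term plus partial summation finishes. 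You instead pass to the Galois closure $K_i$, apply an effective Chebotarev theorem to the permutation representation of $\Gal(K_i/\QQ)$ on the geometric components, and invoke Burnside to identify the density as $1$. The two are equivalent --- the Artin $L$-function of that permutation representation is $\zeta_{k_i}(s)$, so Burnside is just reading off its residue at $s=1$ --- but the paper's reformulation in terms of prime ideals of $k_i$ sidesteps Chebotarev entirely and needs only the prime ideal theorem, which is a somewhat lighter citation. Both routes require the same zero-free region to get the $O(1/\log T)$ error. So: same theorem, same key geometric step, slightly heavier analytic machinery on your end, but no gap.
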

\begin{proof}
	By the Lang--Weil estimates \cite{LW54} we have
	$$\#\mathcal{Z}(\FF_p) \ll p^d \ll p^{n-1},  \quad
	\#\mathcal{X}(\FF_p) = p^n + O(p^{n-1/2}).$$
	Then (1) follows  from the estimate $$\sum_{p\geq T } \l( \frac{\#\mathcal{Z}(\FF_p)}{\# \mathcal{X}(\FF_p)} \r)^2 \ll \sum_{p\geq T } p^{-2} \ll \frac{1}{T},  $$ 
	while,  the case $d < n-1$ of (2)    follows from 
	$$ \sum_{p\geq T } \frac{\#\mathcal{Z}(\FF_p)}{\# \mathcal{X}(\FF_p)} \ll \sum_{p\geq T } \frac{p^{d} }{p^n } 
	\leq \sum_{p\geq T } p^{-2} \ll \frac{1}{T}.$$
	It thus suffices to prove (2) when $d = n-1$. 
	For a number field $k$ we denote by
	$z_p(k)$ the number of prime ideals of $k$ of degree $1$ over $p$.
	Let $I$ be the set of irreducible components of $Z$, and for each
	$i \in I$ let $k_i$ be the algebraic closure of $\QQ$
	in the function field of the corresponding irreducible component;
	this is a number field. For all sufficiently large primes $p$,
	the irreducible components of $\mathcal{Z}_{\FF_p}$ which 
	are geometrically integral correspond exactly to those
	prime ideals of $k_i$ of degree $1$ over $p$. Moreover the components
	which are not geometrically integral contain $O(p^{n-2})$ points
	over $\FF_p$, by Lang--Weil.
	Thus applying the Lang--Weil estimates  to each
	irreducible component of $\mathcal{Z}_{\FF_p}$ gives
	$$\#\mathcal{Z}(\FF_p) = \sum_{i \in I}z_p(k_i)p^{n-1} + O(p^{n-3/2})$$
	and hence 
	$$\frac{\#\mathcal{Z}(\FF_p)}{\# \mathcal{X}(\FF_p)} = 
	\sum_{i \in I}\frac{z_p(k_i)}{p} + O\left(\frac{1}{p^{3/2}}\right).$$
	However turning this into a sum over the non-zero prime ideals 
	of the number field, we have
	$$\sum_{p \leq T} z_p(k) = \sum_{\substack{N(\mathfrak{p}) \leq T \\ N(\fp) \text{ prime}}}1 = \sum_{\substack{N(\mathfrak{p}) \leq T }}1 + O(T^{1/2})
	= \Li(T) + O(T\exp(-c\sqrt{\log T}))$$	
	for some constant 
	$c > 0$, where the second equality is by 
	\cite[Lem.~9.3]{Ser12} and the last
	by the prime ideal theorem \cite[Thm.~3.1]{Ser12}.
	The result now follows from partial summation.
\end{proof}

\begin{corollary} \label{cor:D_i}
	Let $D_1,D_2$ be reduced divisors on $X$ and $\mathcal{D}_i$
	their closures in $\mathcal{X}$. Then
	$$\lim_{T\to \infty} 
	\frac{\sum_{p \leq T} \#(\mathcal{D}_1 \cap \mathcal{D}_2)(\FF_p)/
	\# \mathcal{X}(\FF_p)}
	{(\sum_{p \leq T} \#\mathcal{D}_1(\FF_p)/
	\# \mathcal{X}(\FF_p))^{1/2}
	(\sum_{p \leq T} \#\mathcal{D}_2(\FF_p)/
	\# \mathcal{X}(\FF_p))^{1/2}} = 
	\frac{\langle D_1, D_2 \rangle}{\| D_1\| \| D_2 \|}.$$
\end{corollary}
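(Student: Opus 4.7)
The plan is to read off the corollary from Proposition~\ref{prop:Serre}(2) applied separately to the numerator and the denominator, after carefully handling the fact that $\mathcal{D}_1 \cap \mathcal{D}_2$ is not necessarily of pure dimension.

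First I would handle the denominator. Since $D_1$ and $D_2$ are reduced divisors on the $n$-dimensional variety $X$, each $\mathcal{D}_i$ is (the closure of) a reduced subscheme of pure dimension $n-1$. Applying Proposition~\ref{prop:Serre}(2) in the case $d=n-1$ gives
\[
\sum_{p \leq T} \frac{\#\mathcal{D}_i(\FF_p)}{\#\mathcal{X}(\FF_p)} = \langle D_i, D_i \rangle \log \log T + C'_{D_i} + O(1/\log T).
\]
Since $D_i$ is reduced, $\langle D_i, D_i \rangle$ equals the number of irreducible components of $D_i$, which is exactly $\|D_i\|^2$. In particular $\|D_i\| > 0$.

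For the numerator, set $Z := (D_1 \cap D_2)_{\mathrm{red}}$. In general $Z$ is not of pure dimension, so Proposition~\ref{prop:Serre}(2) does not apply directly. I would decompose $Z = Z_{n-1} \cup Z_{<n-1}$, where $Z_{n-1}$ is the union of those irreducible components of $Z$ of dimension $n-1$ and $Z_{<n-1}$ the union of the remaining components (of dimension $\leq n-2$). The key geometric observation is that the irreducible components of $Z$ of dimension $n-1$ are exactly the irreducible components common to $D_1$ and $D_2$, so $Z_{n-1}$ is a reduced divisor with $\langle Z_{n-1}, Z_{n-1} \rangle = \langle D_1, D_2 \rangle$ common components (in the sense of \S\ref{sec:inner_product}). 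By inclusion–exclusion
\[
\#Z(\FF_p) = \#Z_{n-1}(\FF_p) + \#Z_{<n-1}(\FF_p) - \#(Z_{n-1} \cap Z_{<n-1})(\FF_p),
\]
where the last two terms count points on schemes of dimension $\leq n-2$ and hence are $O(p^{n-2})$ by Lang--Weil, contributing $O(1/p^2)$ to the normalised sum. Applying Proposition~\ref{prop:Serre}(2) with $d=n-1$ to $Z_{n-1}$ and with $d<n-1$ to $Z_{<n-1}$ and $Z_{n-1}\cap Z_{<n-1}$ gives
\[
\sum_{p \leq T} \frac{\#(\mathcal{D}_1 \cap \mathcal{D}_2)(\FF_p)}{\#\mathcal{X}(\FF_p)} = \langle D_1, D_2 \rangle \log \log T + O(1).
\]

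Taking the ratio and letting $T \to \infty$ yields
\[
\frac{\langle D_1, D_2 \rangle \log \log T + O(1)}{\bigl(\|D_1\|^2 \log \log T + O(1)\bigr)^{1/2}\bigl(\|D_2\|^2 \log \log T + O(1)\bigr)^{1/2}} \longrightarrow \frac{\langle D_1, D_2 \rangle}{\|D_1\|\,\|D_2\|},
\]
as desired. The only mildly delicate step is the dimension decomposition of the scheme-theoretic intersection, but this is straightforward once one recognises that only components of top dimension $n-1$ can contribute the $\log\log T$ main term and that these are precisely the common irreducible components of $D_1$ and $D_2$.
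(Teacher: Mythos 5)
Your proposal is correct and takes precisely the route the paper intends: the corollary is left as an immediate consequence of Proposition~\ref{prop:Serre}, and your proof simply carries out the bookkeeping. You have also correctly identified the one genuine subtlety --- that $\mathcal{D}_1 \cap \mathcal{D}_2$ need not be of pure dimension, so Proposition~\ref{prop:Serre}(2) cannot be invoked verbatim --- and handled it in the right way by splitting off the top-dimensional part and observing that everything of dimension $\leq n-2$ contributes only $O(1)$ to the normalised sum, with the $(n-1)$-dimensional components of the intersection being exactly the common irreducible components of $D_1$ and $D_2$. One pedantic remark: $Z_{<n-1}$ and $Z_{n-1}\cap Z_{<n-1}$ themselves need not be of pure dimension either, so Proposition~\ref{prop:Serre}(2) with ``$d < n-1$'' does not apply to them literally; but as you already note, the Lang--Weil bound $\ll p^{n-2}$ is all that is needed there, and that holds without purity. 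A similar remark applies to the harmless discrepancy between the closure of $(D_1 \cap D_2)_{\mathrm{red}}$ and $\mathcal{D}_1 \cap \mathcal{D}_2$ at finitely many primes. None of these affect the correctness of your argument.
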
 
\subsection{Proof of Theorem \ref{thm:main}} 
We now take the notation and set up of Theorem \ref{thm:main}.

\subsubsection{Application of Theorem \ref{thm:mainthmrqw}}
 We take $\Omega = \mathcal{X}(\Z) \setminus  \mathcal{D}(\ZZ), h = H$ and 
$$m: \Omega \to \N^n, \quad x \mapsto \left(\prod_{\substack{p \\ x \bmod p \in \mathcal{D}_i(\FF_p)}} p\right)_{i = 1,\dots,n}.$$
That \eqref{def:northcot2qwe} and \eqref{def:omomqwe} hold is clear.
We next show \eqref{def:gdensityqwe} and \eqref{eq:::defmultqwe} using \eqref{eqn:eff_equ}. For this we require the following.

\begin{lemma} \label{lem:points_in_D}
We have 
	$$\#\{ x \in \mathcal{D}(\Z): H(x) \leq B\}  
	= o(\#\{ x \in \mathcal{X}(\Z): H(x) \leq B\}).$$
\end{lemma}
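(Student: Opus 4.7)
The plan is to compare $\#\{x \in \mathcal{D}(\Z) : H(x) \leq B\}$ with $N(B) := \#\{x \in \mathcal{X}(\Z) : H(x) \leq B\}$ via reduction modulo a single, carefully chosen prime $p = p(B)$. The starting observation is trivial: any $x \in \mathcal{D}(\Z)$ has $x \bmod p \in \mathcal{D}(\F_p)$, so for every prime $p > A$ coprime to $\prod_{q \le A} q$ one has
$$\#\{x \in \mathcal{D}(\Z) : H(x) \leq B\} \le \#\{x \in \mathcal{X}(\Z) : H(x) \leq B, \ x \bmod p \in \mathcal{D}(\F_p)\}.$$

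The next step is to invoke the effective equidistribution hypothesis \eqref{eqn:eff_equ} with $Q = p$ and $\Upsilon = \mathcal{D}(\F_p) \subseteq \mathcal{X}(\F_p)$, which rewrites the right-hand side as
$$\frac{\#\mathcal{D}(\F_p)}{\#\mathcal{X}(\F_p)}\, N(B) + O(p^{M} B^{-\eta}).$$
By the Lang--Weil estimates (as in the proof of Proposition \ref{prop:Serre}(1)), the density factor satisfies $\#\mathcal{D}(\F_p)/\#\mathcal{X}(\F_p) \ll 1/p$, since $\mathcal{D}$ has dimension one less than the (geometrically integral) $\mathcal{X}$.

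It then remains to pick $p = p(B)$ that grows with $B$ but slowly enough that $p^M B^{-\eta}$ is negligible. Choosing, say, any prime $p$ in the interval $[\log B, 2\log B]$ (which exists for $B$ large by Bertrand's postulate) gives $p^M B^{-\eta} = O((\log B)^M B^{-\eta}) \to 0$, hence $p^M B^{-\eta} = o(N(B))$ as $N(B) \to \infty$, while simultaneously $N(B)/p \ll N(B)/\log B = o(N(B))$. Combining these two bounds yields the desired estimate.

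There is no real obstacle: the argument is a short application of \eqref{eqn:eff_equ} with a single prime. The one point requiring a mild balancing act is ensuring that $p$ is chosen so that both the error term in \eqref{eqn:eff_equ} and the density factor $1/p$ are simultaneously $o(1)$; this is easy because the polynomial decay $B^{-\eta}$ in \eqref{eqn:eff_equ} leaves ample slack.
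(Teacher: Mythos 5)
Your proposal is correct and follows essentially the same route as the paper: upper-bound $\#\{x \in \mathcal{D}(\Z) : H(x) \leq B\}$ by the count of integral points reducing into $\mathcal{D}(\F_p)$ for a single auxiliary prime $p$, apply the effective equidistribution hypothesis \eqref{eqn:eff_equ} with $Q = p$ and $\Upsilon = \mathcal{D}(\F_p)$, use Lang--Weil to get $\#\mathcal{D}(\F_p)/\#\mathcal{X}(\F_p) \ll 1/p$, and choose $p$ growing with $B$ via Bertrand's postulate so that both $1/p$ and $p^M B^{-\eta}$ tend to zero. The only difference is cosmetic --- you take $p \asymp \log B$ while the paper takes $p \asymp B^{(\eta-\varepsilon)/M}$; either choice works since all that is needed is $o(1)$. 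One small slip: after multiplying \eqref{eqn:eff_equ} through by $N(B)$, the error term should read $O(p^M B^{-\eta} N(B))$ rather than $O(p^M B^{-\eta})$, but since you need only $p^M B^{-\eta} \to 0$ the conclusion is unaffected.
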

\begin{proof}
	Let $0< \varepsilon < \eta$ and let $p$ be a prime with
	$B^{(\eta-\varepsilon)/M} < p < 2B^{(\eta-\varepsilon)/M}$
	(this exists by Bertrand's  postulate). Then applying
	\eqref{eqn:eff_equ} we obtain
	\begin{align*}
	\frac{\#\{ x \in \mathcal{D}(\Z): H(x) \leq B\}}{\#\{ x \in \mathcal{X}(\Z): H(x) \leq B\}} 
	&\leq \frac{\#\{ x \in \mathcal{X}(\Z): H(x) \leq B, x \bmod p \in \mathcal D(\F_p)\}}{\#\{ x \in \mathcal{X}(\Z): H(x) \leq B\}} \\
	& = \frac{\#\mathcal{D}(\F_p)}{\#\mathcal{X}(\F_p)} + O\left(\frac{p^M}{B^{\eta}}\right)
	\ll \frac{1}{p} + B^{-\varepsilon}
	= o(1)
	\end{align*}
	where the penultimate line is by the Lang--Weil estimates.
\end{proof}

Let $d_1,\dots, d_n$ be square-free and let $d=[d_1,\dots,d_n]$ be their least common multiple. Let
$$\Upsilon_{\b d} = \{ x \in \mathcal{X}(\ZZ/d\ZZ): x \bmod d_i \in \mathcal{D}(\ZZ/d_i\ZZ), i = 1,\dots, n\}.$$
Providing each $d_i$ is coprime to every $p \leq 
 A$, Lemma \ref{lem:points_in_D} and \eqref{eqn:eff_equ} imply
 that
\begin{align*}
\frac{\#\{ x \in \Omega: H(x) \leq B, x \bmod d_i \mid m_i(x), i = 1,\dots, n\}}
{\#\{ x \in \Omega: H(x) \leq B\}}   
 = \frac{\#\Upsilon_{\b d}}{\# \mathcal{X}(\ZZ/d\ZZ)} + O(d^{M} B^{-\eta}).
\end{align*}
Thus \eqref{def:gdensityqwe} holds with
$$g(\mathbf{d}) = \frac{\#\Upsilon_{\b d}}{\# \mathcal{X}(\ZZ/d\ZZ)},$$
where $g$ is supported on vectors $\mathbf{d}$ with square-free entries such that $p \mid d_i \implies p > A$.
To see that $g$ is multiplicative, let
$\gcd(d_1\dots d_n,d_1'\dots d_n') = 1$.
Then
$$g(\mathbf{d}\mathbf{d}') = \frac{\#\Upsilon_{\mathbf{dd'}}}{\# \mathcal{X}(\ZZ/[d_1d_1',\dots,d_nd_n']\ZZ)} = \frac{\#\Upsilon_{\b d}}{\# \mathcal{X}(\ZZ/d\ZZ)} \cdot \frac{\#\Upsilon_{\mathbf{d'}}}{\# \mathcal{X}(\ZZ/d'\ZZ)} = g(\mathbf{d})g(\mathbf{d}')$$
by the Chinese remainder theorem and our coprimality assumption. 
This shows \eqref{eq:::defmultqwe}. 
Next \eqref{eq:asumptnqwe} follows from Proposition \ref{prop:Serre} with $c_i = \langle D_i, D_i \rangle$. To show \eqref{def:northcot2hgf8sqwe} we use the following.

\begin{lemma}
	There exists $c> 0$ such that 
	for all $x \in \mathcal{X}(\ZZ) \setminus \mathcal{D}(\ZZ)$ we have
	$$\prod_{\substack{p \\ x \bmod p \in \mathcal{D}(\FF_p)}} p
	\ll H(x)^{c}.$$
\end{lemma}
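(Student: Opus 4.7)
The plan is to associate to each $x \in \mathcal{X}(\ZZ) \setminus \mathcal{D}(\ZZ)$ a nonzero integer $N_x$, bounded polynomially in $H(x)$, such that every prime $p$ for which $x \bmod p \in \mathcal{D}(\F_p)$ divides $N_x$. The lemma then follows immediately from $\prod_{p} p \leq |N_x|$.

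To construct $N_x$, I would pass to the scheme-theoretic closure $\bar{\mathcal{D}}$ of $D$ in $\PP^d_\ZZ$ under the given projective embedding $X \subset \PP^d_\QQ$. As a closed subscheme of $\PP^d_\ZZ$, the scheme $\bar{\mathcal{D}}$ has a finitely generated homogeneous ideal in $\ZZ[x_0, \dots, x_d]$, say cut out by homogeneous polynomials $F_1, \dots, F_r$ of degrees $d_1, \dots, d_r$. For $x \in \mathcal{X}(\ZZ) \subset X(\QQ) \subset \PP^d(\QQ)$, I write $x$ in primitive integer coordinates $(x_0 : \cdots : x_d)$ so that $H(x) = \max_i |x_i|$, and set $N_x := \gcd_j F_j(x)$. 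The integer $N_x$ is nonzero because $x \notin \mathcal{D}(\ZZ)$ forces $x_\QQ \notin D(\QQ) = \bar{\mathcal{D}}(\QQ) \cap X(\QQ)$, hence $x_\QQ \notin \bar{\mathcal{D}}(\QQ)$, and so not every $F_j(x)$ vanishes. The bound $|N_x| \leq |F_1(x)| \ll H(x)^{d_1}$ is immediate since each $F_j$ is a homogeneous polynomial with integer coefficients evaluated at integers of absolute value at most $H(x)$. For the divisibility, note that for all primes $p$ outside a finite exceptional set, the models $\mathcal{X}$ and $\bar{\mathcal{X}}$ (the closure of $X$ in $\PP^d_\ZZ$) become compatible modulo $p$, so $x \bmod p \in \mathcal{D}(\F_p)$ implies $x \bmod p \in \bar{\mathcal{D}}(\F_p)$, which by the defining equations means $F_j(x) \equiv 0 \pmod p$ for every $j$, whence $p \mid N_x$. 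The finitely many excluded primes contribute only an $O(1)$ factor to the product, which is absorbed into the implicit constant.

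The main technical obstacle is this compatibility between the arbitrary model $\mathcal{X}$ chosen in the statement and the projective closure $\bar{\mathcal{X}}$: the two agree on the generic fiber $X$ by construction, but may disagree on special fibers, so one must verify that the reduction maps agree outside a finite set of primes of bad reduction. This is a standard spreading-out argument, exploiting the valuative criterion of properness for $\PP^d_\ZZ$ to see that the birational identification $\mathcal{X}_\QQ = \bar{\mathcal{X}}_\QQ$ extends to an isomorphism between open subsets of $\mathcal{X}$ and $\bar{\mathcal{X}}$ over a dense open subset of $\Spec \ZZ$. An alternative, fully intrinsic approach would avoid $\bar{\mathcal{X}}$ altogether by fixing a finite affine open cover $\{U_i\}$ of $\mathcal{X}$, writing the defining equations of $\mathcal{D} \cap U_i$ as ratios of homogeneous polynomials in $x_0, \dots, x_d$, and assembling $N_x$ from these local data along with the denominators that cut out $\mathcal{X} \setminus U_i$; this is more cumbersome but sidesteps the compatibility issue.
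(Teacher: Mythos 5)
Your argument matches the paper's: pass to the closure $\bar{\mathcal{D}}$ of $D$ in $\PP^d_\ZZ$, pick homogeneous generators $F_1,\dots,F_r$ of its ideal, observe that every relevant prime divides the nonzero $\gcd$ of their values at primitive coordinates, and bound that value polynomially in $H(x)$. One small slip: you write $|N_x| \leq |F_1(x)|$, but $F_1(x)$ could vanish --- the correct statement is $|N_x| \leq |F_j(x)| \ll H(x)^{d_j}$ for some $j$ with $F_j(x) \neq 0$ (which exists since $x \notin \mathcal{D}(\ZZ)$), so one should take $c = \max_j d_j$; this is cosmetic and does not affect the validity of the argument, which is essentially identical to the paper's, merely with a more explicit acknowledgement of the spreading-out needed to compare the arbitrary model $\mathcal{X}$ with the projective closure.
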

\begin{proof}
	Let $\bar{\mathcal{D}}$ be the closure of $D$ in $\PP^d_\Z$. 
	Choose homogeneous polynomials $f_1,\dots,f_r$ over $\ZZ$ which generate
	the ideal of $\bar{\mathcal{D}}$. As $x \notin \mathcal{D}(\ZZ)$, we have 
	$f_i(x) \neq 0$ for some $i$.
	Moreover $x \bmod p \in \mathcal{D}(\FF_p)$  implies that $p \mid f_i(x)$.
	Thus the quantity in question is at most $|f_i(x)| \ll H(x)^{\deg f_i}
	\ll H(x)^{\deg \bar{\mathcal{D}}}$, as required.
\end{proof}
We find that \eqref{def:northcot2hgf8sqwe} holds with $\mathcal{F}(B) \ll B^{c}$. Then \eqref{eq:thebigassumptnqwe}  follows from \eqref{eqn:eff_equ} (see Remark~\ref{rem:zerolevelofdistr}). Finally the limit  \eqref{eqn:gij} exists and equals $\langle D_i, D_j \rangle/\| D_i\| \| D_j \|$ by Corollary \ref{cor:D_i}. Thus all assumptions of Theorem \ref{thm:mainthmrqw} hold and we deduce the first part of Theorem \ref{thm:main}.

\subsubsection{Rank of the matrix}
It remains to prove the final part of Theorem \ref{thm:main}, regarding the formula for the rank of the covariance matrix.  As the $D_i$ are reduced, the matrix $(c_{i,j}/\sqrt{c_{i,i}c_{j,j}})$ is exactly the Gram matrix of the divisors $D_1/\|D_1\|,\dots,D_n/\|D_n\|$ with respect to the inner product on $\Div_\RR X$ defined in \S\ref{sec:inner_product}. However the rank of the Gram matrix is the dimension of the vector subspace of $\Div_\RR X$ generated by the $D_i/\|D_i\|$. But this is also equal to the rank of the subgroup of $\Div X$ generated by the $D_i$. This completes the proof. \qed

\section{Examples} \label{sec:examples}

We now give various examples illustrating our results and use Theorem \ref{thm:main} to prove the special cases stated in the introduction.

\subsection{Complete intersections} \label{sec:Birch}
Here we explain the proof of Theorem \ref{thm:CI}.
We apply
Theorem~\ref{thm:main}
with  $X: f_1=\cdots=f_R=0$,
$d=n-1$ and $D_i=X\cap (x_i=0)$. We take $\mathcal{X}$ to be the model given by taking the closure of $X$ in $\P^d_\Z$.
It suffices to verify~\eqref{eqn:eff_equ}.

Choose $A =A(\b f )> 0$ such that $X$ has good reduction at all primes $p > A$.  Let $Q$ be square-free and supported on primes greater than $A$. Let  $\Upsilon \subset \mathcal{X}(\Z/Q\Z)$ and 
$$N(\Upsilon, B) : = 
\#\{ x \in \mathcal{X}(\ZZ): H(x) \leq B, x \bmod Q \in \Upsilon\}
.$$ 
We first note that the leading term of \eqref{eqn:eff_equ} is known to hold, and follows from equidistribution results of Peyre and standard properties of Tamagawa measures \cite{Pey95, Sal98}.

\begin{lemma}
\label
{lem:sievemethodsrichert} We have 
$$
\lim_{B \to \infty} \frac{N(\Upsilon, B)}
{\#\{ x \in \mathcal{X}(\ZZ): H(x) \leq B\}}
= \frac{\#\Upsilon}{\# \mathcal{X}(\ZZ/Q\ZZ)}.
$$
\end{lemma}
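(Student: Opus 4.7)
The plan is to invoke equidistribution for the integer points of $X$ with respect to the Tamagawa measure on $X(\A_\Q)$. Under the Birch rank hypothesis $\mathfrak{B}(\b f)>2^{D-1}(D-1)R(R+1)$, the Birch circle method \cite{Bir62} produces an asymptotic for the count of integer points of bounded height, and the work of Peyre \cite{Pey95} and Salberger \cite{Sal98} interprets the leading constant as a Tamagawa number. The same argument applies with a fixed congruence condition on $\b x$ modulo $Q$, since $Q$ does not depend on $B$; equivalently, the projective integer points equidistribute with respect to $\tau$ at every finite place, and the condition $x \bmod Q \in \Upsilon$ cuts out a compact-open, hence continuity, subset of $\prod_{p\mid Q} X(\Q_p)$.

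Having reduced to a Tamagawa measure computation, I would then verify that the ratio equals $\#\Upsilon/\#\mathcal X(\Z/Q\Z)$. The Tamagawa measure factors as $\tau = \tau_\infty \prod_p \tau_p$. Since $Q$ is supported on primes $p>A$ of good reduction, smoothness together with Hensel's lemma gives $\tau_p(\mathcal X(\Z_p)) = \#\mathcal X(\F_p)/p^{\dim X}$, and more generally $\tau_p(\{x \in \mathcal X(\Z_p) : x \bmod p \in \Upsilon_p\}) = \#\Upsilon_p/p^{\dim X}$ for any $\Upsilon_p \subset \mathcal X(\F_p)$. Taking the ratio, all archimedean factors and all factors at $p \nmid Q$ cancel, leaving $\prod_{p \mid Q} \#\Upsilon_p/\#\mathcal X(\F_p) = \#\Upsilon/\#\mathcal X(\Z/Q\Z)$ by the Chinese remainder theorem, using that $Q$ is squarefree.

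The one subtlety is that $\mathcal X(\Z)$ in this paper corresponds to primitive integer $n$-tuples (equivalently, sections of $\mathcal X \to \Spec \Z$), while Birch's theorem counts all integer solutions in a box; the passage between the two is standard Möbius inversion over $k = \gcd(x_1,\dots,x_n)$. Since this inversion affects the numerator and denominator of the limit in the statement identically, the primitivity factor cancels in the ratio. The main input for which the Birch rank hypothesis is essential is precisely the validity of the equidistribution statement (with the explicit Tamagawa leading constant) in this range; once that is granted, the remainder is bookkeeping of local factors.
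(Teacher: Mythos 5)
Your argument matches the paper's proof in substance: both deduce the result from equidistribution of rational points with respect to Peyre's Tamagawa measure (the paper cites \cite[Prop.~5.5.3]{Pey95} for Manin's conjecture with arbitrary height and \cite[Prop.~3.3]{Pey95} for the implied equidistribution), combined with the computation of the local densities at primes dividing $Q$ via smoothness and Hensel's lemma (which the paper outsources to \cite[Thm.~2.14(b)]{Sal98}). The concluding remark about Möbius inversion between affine and primitive counts is not needed if one invokes Peyre's projective framework directly, as the paper does, but it does no harm to the argument.
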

\begin{proof}
By  \cite[Prop.~5.5.3]{Pey95}, Manin's conjecture holds here with respect to arbitrary choices of height function. This implies that the rational points are equidistributed with respect to Peyre's Tamagawa measure \cite[Prop.~3.3]{Pey95}. The measure of the resulting adelic volumes is calculated in \cite[Thm.~2.14(b)]{Sal98} (cf.~\cite[Cor.~2.15]{Sal98}), and gives the stated result.
\end{proof}

It therefore suffices to show that we can obtain an asymptotic formula for $N(\Upsilon, B)$ with an effective error term. 
Denote   the affine cone of $\Upsilon$ by 
$$\widehat{\Upsilon} = 
\l\{ \b y \in (\Z/Q\Z)^n :  \b y \not \equiv \b 0 
\bmod p \, \forall p \mid Q, \, \b y \bmod Q \in \Upsilon
\r \}.$$ We begin with a M\"{o}bius inversion. The key observation in the following lemma is that we may take the M\"{o}bius variable $k$ to be small.

\begin{lemma}
\label
{lem:mobiusinvers}
Fix an arbitrary $\eta_1>0$.
 Then 
for all 
$B \geq 1 $
we have that $N(\Upsilon,B)$ equals 
\[
\frac{1}{2} 
\sum_{\b y \in \widehat{\Upsilon}}
\sum_{\substack{k \in \N \cap [1,B^{\eta_1} ]  \\ \gcd(k,Q) = 1   }} 
 \hspace{-0,3cm}
\mu(k)
\# \l
\{
 \b x \in \l( \Z\cap \l[-
\frac{B}{k}  
,  
\frac{B}{k}  
 \r] \r)^n
\hspace{-0,2cm}
:
  \b f (\b x)=\b 0 ,  \b x \equiv  \frac{  \b y }{k}  \bmod Q 
\r \}
+O_{\b f }
\l(
 Q^n
B^{n-RD-\eta_1 }  
\r)
,\]
 where the implied constant depends at most on $\b f $. 
\end{lemma}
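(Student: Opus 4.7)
The plan is to parameterise $X(\Q)$ by primitive integer solutions of $\b f = \b 0$ and detect primitivity by Möbius inversion. Since $\mathcal{X}$ is the closure of $X$ in $\P^d_{\Z}$, each $x \in \mathcal{X}(\Z) = X(\Q)$ has exactly two primitive integer representatives $\pm \b x \in \Z^n$ with $\max_i |x_i| = H(x)$, and hence
$$2N(\Upsilon, B) = \sum_{\b y \in \widehat{\Upsilon}} \#\bigl\{\b x \in \Z^n \setminus \{\b 0\}: \gcd(\b x) = 1,\ \max_i |x_i| \leq B,\ \b f(\b x) = \b 0,\ \b x \equiv \b y \bmod Q\bigr\}.$$

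Inserting the identity $\mathds{1}(\gcd(\b x)=1) = \sum_{k \mid \gcd(\b x)} \mu(k)$ and substituting $\b x = k\b x'$, the homogeneity of $\b f$ (of common degree $D$) turns $\b f(k\b x') = \b 0$ into $\b f(\b x') = \b 0$. The congruence $k\b x' \equiv \b y \bmod Q$, combined with the primitivity of $\b y$ modulo every prime $p \mid Q$, forces $\gcd(k,Q) = 1$, in which case it becomes $\b x' \equiv k^{-1}\b y \bmod Q$. Truncating the $k$-sum at $B^{\eta_1}$ then produces exactly the main term displayed in the lemma, and it remains only to estimate the tail $k > B^{\eta_1}$.

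The main obstacle, and the essential arithmetic input, is a uniform upper bound for the number of integer zeros of $\b f$ in a box. I would invoke the Birch-type bound
$$\#\bigl\{\b x' \in \Z^n : \b f(\b x') = \b 0,\ \max_i |x'_i| \leq X\bigr\} \ll_{\b f} X^{n-RD} \qquad (X \geq 1),$$
which is available under the assumed Birch rank hypothesis (via the circle method estimates of Birch and its extensions to complete intersections). For $k > B$ the box shrinks to $\{\b 0\}$, which is excluded by primitivity, so only $B^{\eta_1} < k \leq B$ contribute. Using $|\widehat{\Upsilon}| \leq Q^n$, the tail is bounded by
$$\ll_{\b f}\ Q^n \sum_{B^{\eta_1} < k \leq B}(B/k)^{n-RD} \ll Q^n B^{n-RD - \eta_1(n-RD-1)}.$$
The Birch range hypothesis $\mathfrak{B}(\b f) > 2^{D-1}(D-1)R(R+1)$ comfortably forces $n - RD \geq 2$, whence $\eta_1(n-RD-1) \geq \eta_1$ and the tail is $O_{\b f}(Q^n B^{n-RD-\eta_1})$, as required.
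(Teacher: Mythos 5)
Your proposal is correct and follows essentially the same route as the paper: Möbius inversion over the gcd to detect primitivity of the projective representative, the observation that $p\mid\gcd(k,Q)$ is incompatible with $\widehat{\Upsilon}$ consisting of primitive residues mod each $p\mid Q$ (which the paper encodes by restricting the Möbius sum to $\gcd(k,Q)=1$ from the outset), homogeneity of $\b f$ to rescale, Birch's upper bound $\ll_{\b f} P^{n-RD}$, and the trivial bound $\#\widehat{\Upsilon}\leq Q^n$ to estimate the tail $k>B^{\eta_1}$. The only cosmetic difference is in the tail estimate: the paper crudely bounds $(B/k)^{n-RD}\ll B^{n-RD}k^{-2}$ using $n-RD\geq 2$ and sums to get $B^{n-RD}L^{-1}$, while you keep the exponent $n-RD$ and obtain the (slightly sharper, but equivalent for present purposes) bound $B^{n-RD-\eta_1(n-RD-1)}$; both yield $O_{\b f}(Q^n B^{n-RD-\eta_1})$.
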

\begin{proof}
 Using M\"{o}bius inversion we see that $N(\Upsilon,B)$ equals 
\begin{align*}& \frac{1}{2}
\#\l \{ \b x \in \Z^n:
\gcd(x_1,\ldots, x_n)=1,
\max_i |x_i| \leq B, \b f (\b x)=\b 0 , \b x \bmod Q \in \widehat{\Upsilon} \r \} 
\\ = & \frac{1}{2}
\sum_{\substack{1\leq k \leq B \\ \gcd(k,Q) = 1}} \mu(k)
\# \{ \b x \in \Z^n:\max_i |x_i| \leq B/k, \b f (\b x)=\b 0 , k\b x \bmod Q \in \widehat{\Upsilon} \}	 
\\ = & \frac{1}{2}
\sum_{\b y \in \widehat{\Upsilon}}\sum_{\substack{k \leq B \\ \gcd(k,Q) = 1}} \mu(k)
\# \{ \b x \in \Z^n:\max_i |x_i| \leq B/k, \b f (\b x)=\b 0 , \b x \equiv k^{-1} \b y  \bmod Q \},
\end{align*}
where the inverse is taken modulo $Q$.
We note that Birch's estimate  \cite[Thm.~1]{Bir62}
ensures that for all $P\geq 1 $ one has
\[
 \# \{ \b x \in \Z^n:\max_i |x_i| \leq P, \b f (\b x)=\b 0 \}
=O_{\b f} 
  \l(
P ^{n-RD}
\r)
.\]Therefore, ignoring the condition $
 \b x \equiv 
 k^{-1} 
\b y \bmod Q$ 
we obtain
\[
\# \{ \b x \in \Z^n:\max_i |x_i| \leq B/k, \b f (\b x)=\b 0 , \b x \equiv k^{-1} \b y  \bmod Q \}
  \ll_{\b f} 
  (B /k )^{n-RD}
.\]
 Noting that our assumptions ensure that $n-RD\geq 2 $ , hence 
  this is $\ll_{\b f}
B^{n-RD} k^{-2}$. 
Using the trivial
bound $ \#\widehat{\Upsilon}
\leq Q^n $ 
we 
therefore
see that 
 for all $L \geq 
1 $ 
we have
\begin{align*}
&
\sum_{\b y \in \widehat{\Upsilon}}
\sum_{\substack{L< k \leq B \\ \gcd(k,Q) = 1   }} 
\mu(k)
\# \{ \b x \in \Z^n:\max_i |x_i| \leq B/k, \b f (\b x)=\b 0 , \b x \equiv k^{-1} \b y  \bmod Q \}
\\
\ll_{\b f }
&
\
Q^n
B^{n-RD}
\sum_{  k> L } 
k^{-2} 
 \ll 
B^{n-RD}
Q^n
L^{-1}
.\end{align*}
Taking $L=B^{\eta_1}$ 
concludes the proof.
\end{proof}
  
We next record 
 the case $\boldsymbol{\nu}=\b 0$
of the work by 
van Ittersum~\cite[Thm. 2.15]{arXiv:1709.05126}.
It gives an effective
error term for the number of integer zeros
 of bounded height on a complete intersection of polynomials which need not be homogeneous.
For a polynomial $g$ let $\widetilde{g}$ denote the homogeneous part of $g$.
\begin{lemma}
[van Ittersum]
\label
{lem:bachcel}
Let $g_1, \ldots, g_R \in \Z[x_1,\ldots, x_n]$  
be arbitrary polynomials of common degree $D$
and assume that 
$\mathfrak{B}(\b g)>2^{D-1}(D-1)R(R+1)$.
Then there exist positive
$M_1,\eta_2$
that depend at most on $\mathfrak{B}(\b g), R$ and $ D$
such that 
\[
\#\l
\{\b z \in \Z^n : \max_{1\leq i \leq n }|z_i| \leq B: \b g(\b z)=\b 0 
\r
\}
=\mathfrak{S}(\b g ) 
J(\widetilde{\b g })
 B^{n-RD}
+O\l( B^{n-RD-\eta_2}
C
 \widetilde{C}^{M_1}
\r)
,\]
where the implied constant 
depends at most on 
$n , D,  R, \mathfrak{B}(\b g)$
and where 
$C$ and $ \widetilde{C}$
   respectively denote 
 the maximum absolute value of the coefficient of all
$g_i$ 
and $\widetilde{g_i}$.
Here $\mathfrak{S}(\b g )$ is the Hardy--Littlewood singular series associated to the system 
$\b g =\b 0$
and $J(\widetilde{\b g })
$ is the Hardy--Littlewood singular integral associated to the system 
$\widetilde{\b g }=\b 0$.
\end{lemma}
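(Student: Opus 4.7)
The plan is to adapt Birch's circle method (as in \cite{Bir62}) to systems of polynomials which are not necessarily homogeneous, while tracking carefully the dependence of all error terms on the coefficient sizes $C$ and $\widetilde{C}$. First I would express the counting function via exponential sums,
\[
\#\{\b z \in \Z^n : |z_i| \leq B, \b g(\b z) = \b 0\} = \int_{[0,1]^R} S(\boldsymbol{\alpha}; B)\, \daa, \quad S(\boldsymbol{\alpha}; B) := \sum_{|z_i| \leq B} e(\boldsymbol{\alpha} \cdot \b g(\b z)),
\]
and split $[0,1]^R$ into major and minor arcs of the usual Birch shape, with denominators up to $B^\delta$ for a suitable small $\delta>0$ chosen in terms of $\mathfrak{B}(\b g), R, D$.

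On the minor arcs the core input is a Weyl-type estimate for $S(\boldsymbol{\alpha}; B)$. The key observation is that $D$-fold Weyl differencing converts $\b g$ into its homogeneous leading part $\widetilde{\b g}$ (up to lower-order error), so the exponential sum bound is governed by the geometry of $\widetilde{\b g}$ and in particular by the Birch rank $\mathfrak{B}(\b g) = \mathfrak{B}(\widetilde{\b g})$. Birch's hypothesis $\mathfrak{B}(\b g) > 2^{D-1}(D-1)R(R+1)$ is then exactly what is needed so that the total minor-arc contribution is $O(B^{n-RD-\eta_2})$ for some $\eta_2>0$. Tracking the linear forms that appear in the differencing argument, the dependence on $\widetilde{C}$ comes in polynomially, giving the factor $\widetilde{C}^{M_1}$, while the lower-order terms contribute at most a factor $C$ through shifts in the Weyl sums.

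On the major arcs I would carry out the standard asymptotic evaluation: approximate $S(\boldsymbol{\alpha};B)$ near rationals $\b a/q$ by a product of a complete exponential sum in $(\ZZ/q\ZZ)^n$ with an integral of $e(\boldsymbol{\beta}\cdot \widetilde{\b g}(\b x))$ over $[-B,B]^n$. Summing the first factor over $q$ and $\b a$ produces the singular series $\mathfrak{S}(\b g)$, which depends on the full inhomogeneous system, while the continuous integral assembled over $\boldsymbol{\beta}$ produces the singular integral $J(\widetilde{\b g})$ for the homogeneous leading part, with main term $\mathfrak{S}(\b g) J(\widetilde{\b g}) B^{n-RD}$. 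The lower-order terms in $\b g$ are absorbed into the error by a Taylor expansion around the homogeneous part, contributing another power of $C$.

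The main obstacle is the uniformity in the coefficients. Birch's original treatment allows implicit constants to depend arbitrarily on $\b f$, whereas here one needs an explicit polynomial dependence on $\widetilde{C}$ (through the Weyl estimate on minor arcs) and a mild dependence on $C$ (through the passage from $\b g$ to $\widetilde{\b g}$ and the truncation of the singular series and singular integral). Achieving this requires a careful bookkeeping of every constant produced by differencing, van der Corput-type estimates, and the major-arc approximation; in particular one must show that both $\mathfrak{S}(\b g)$ and $J(\widetilde{\b g})$ converge absolutely and are bounded independently of the coefficients once the Birch rank hypothesis is in force. This is precisely what van Ittersum \cite{arXiv:1709.05126} establishes, and I would follow that blueprint rather than rederive it from scratch.
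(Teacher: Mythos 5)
Your proposal is correct and takes essentially the same route as the paper: this lemma is stated in the paper without proof, being simply the case $\boldsymbol{\nu}=\mathbf{0}$ of van Ittersum~\cite[Thm.\ 2.15]{arXiv:1709.05126}, which is exactly where you also land. Your circle-method sketch (Weyl differencing passing from $\mathbf{g}$ to $\widetilde{\mathbf{g}}$, the Birch rank controlling the minor arcs, and the bookkeeping of $C$, $\widetilde{C}$) is an accurate description of van Ittersum's argument, but the paper itself does not reproduce any of it.
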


Using this, we obtain the following.

\begin{lemma}
 \label
{lem:vanter}
 There exist
$M_2, \eta_3 
>0$ that only depend on $\b f $ 
such that 
for all $\b t \in (\Z /Q \Z)^n$     
the quantity 
$$
\#\{ \b x \in \Z^n:\max_i |x_i| \leq B, \b f (\b x)=\b 0 , \b x \equiv \b t  \bmod{Q} \}
$$
equals 
\[
\sigma_\infty
\l(
\prod_{p\mid Q}
\sigma_p\l( \b t, p^{\nu_p(Q)} \r )
\r)
\l(
\prod_{p\nmid Q}
\sigma_p 
\r)
B^{n-RD}
+O_{\b f}
\l (
 B^{n-RD -\eta_3 } 
Q^{M_2}
\r )
,\]
where the implied constant 
depends at most on $\b f $. Here,
 $\sigma_\infty$ is the standard Hardy--Littlewood 
singular integral associated to the system $\b f =\b 0 $,
\[
 \sigma_p 
:=
\lim_{m\to+\infty }
\frac{
\#\l\{\b x  \in (\Z/p^m\Z)^n: \b f (\b x)\equiv \b 0 \bmod{p^m}  \r\}
}{p^{m(n-R)}},
\]
and for all $e\geq 1 $ and
$\b s \in (\Z/p^e\Z)^n$ we denote 
\[ 
\sigma_p( \b s ,  p^e)
:=
 \lim_{m\to+\infty }
\frac{
\#\l\{\b x \in (\Z/p^m\Z)^n: 
\b f (\b x ) \equiv \b 0 \bmod{p^m} , 
\b x \equiv \b s \bmod {p^{e}} 
\r\}
}{p^{m(n-R)}}
.
\]
 \end{lemma}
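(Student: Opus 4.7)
The plan is to parametrise the congruence class and apply Lemma \ref{lem:bachcel} to the resulting polynomial system. Choosing representatives with $|t_i| < Q$ and substituting $\b x = \b t + Q \b z$, the count equals
\[
\#\l\{ \b z \in \Z^n : \b t + Q\b z \in [-B, B]^n,\ \b g(\b z) = \b 0 \r\},
\]
where $\b g(\b z) := \b f(\b t + Q \b z)$. The condition $\b t + Q\b z \in [-B, B]^n$ describes a (slightly off-centre) box of side length $\approx 2B/Q$; a routine reduction replaces this with the symmetric box $|\b z|_\infty \leq B/Q$, the boundary-slab contribution being absorbed into the error via Birch's bound \cite[Thm.~1]{Bir62}.

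I would then verify the hypotheses of Lemma \ref{lem:bachcel} for $\b g$. Its top-degree part is $\widetilde{\b g} = Q^D \b f$, which cuts out the same affine variety as $\b f$, so $\mathfrak{B}(\b g) = \mathfrak{B}(\b f) > 2^{D-1}(D-1)R(R+1)$. Expanding via the multinomial theorem and using $|\b t|_\infty < Q$ yields $C, \widetilde{C} \ll_{\b f} Q^D$. Applying Lemma \ref{lem:bachcel}, the count equals
\[
\mathfrak{S}(\b g)\, J(\widetilde{\b g})\, (B/Q)^{n - RD} + O_{\b f}\l( (B/Q)^{n-RD - \eta_2} Q^{D(M_1 + 1)} \r),
\]
and the error simplifies to $O_{\b f}(B^{n-RD-\eta_3} Q^{M_2})$ for suitable $\eta_3 > 0$ and $M_2$ depending only on $\b f$.

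The crucial step is identifying the main term with the claimed product. Since $\widetilde{\b g} = Q^D \b f$ is homogeneous, the rescaling $\boldsymbol\alpha \mapsto Q^D \boldsymbol\alpha$ in the definition of the singular integral yields $J(\widetilde{\b g}) = Q^{-RD} \sigma_\infty$. For the singular series at $p \nmid Q$, the affine map $\b z \mapsto \b t + Q \b z$ is a bijection on $(\Z/p^m\Z)^n$, so $\sigma_p(\b g) = \sigma_p$. At $p \mid Q$, writing $Q = p^e Q'$ with $\gcd(Q', p) = 1$ and using the substitution $\b w = Q' \b z$ followed by a direct counting of lifts from $(\Z/p^m\Z)^n$ to $(\Z/p^{m+e}\Z)^n$, one finds $\sigma_p(\b g) = p^{n e}\, \sigma_p(\b t, p^e)$. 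Multiplying over all primes yields $\mathfrak{S}(\b g) = Q^n \prod_{p \mid Q} \sigma_p(\b t, p^{\nu_p(Q)}) \prod_{p \nmid Q} \sigma_p$. The three factors of $Q$---namely $Q^n$ from $\mathfrak{S}(\b g)$, $Q^{-RD}$ from $J(\widetilde{\b g})$, and $Q^{-(n-RD)}$ from $(B/Q)^{n-RD}$---then cancel to reproduce the stated main term.

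The main obstacle is the $Q$-bookkeeping: one must carefully track the $Q$-dependence in each of the local densities, the singular integral, and the volume factor, and verify that all factors of $Q$ cancel exactly. The truncation of the shifted box to a symmetric one and the coefficient bounds on $\b g$ and $\widetilde{\b g}$ are routine applications of standard techniques in the circle method.
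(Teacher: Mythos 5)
Your proposal follows essentially the same route as the paper: substitute $\b x = \b t + Q\b z$, apply van Ittersum's effective Birch asymptotic (Lemma \ref{lem:bachcel}) to $\b g(\b z) := \b f(\b t + Q\b z)$, verify $\mathfrak{B}(\b g) = \mathfrak{B}(\b f)$ and the coefficient bounds $C, \widetilde C \ll_{\b f} Q^D$, and then unwind the $Q$-dependence in the local densities ($\tau_p = \sigma_p$ for $p\nmid Q$, $\tau_p = p^{n\nu_p(Q)}\sigma_p(\b t, p^{\nu_p(Q)})$ for $p\mid Q$) and in $J(\widetilde{\b g}) = Q^{-RD}\sigma_\infty$. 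The $Q$-bookkeeping cancels exactly as you describe, and this all matches the paper's argument.

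Two steps are handled too lightly, though. First, the passage from the shifted box $\b t + Q\b z \in [-B,B]^n$ to the centred box $|\b z|_\infty \leq B/Q$: the Birch \emph{upper} bound \cite[Thm.~1]{Bir62} applied to the boundary slab gives only $\ll (B/Q+1)^{n-RD}$, which is the same order as the main term, not smaller --- Birch's bound alone does not see that the slab is thin. What the paper does instead is sandwich between the boxes of side $B/Q - 1$ and $B/Q+1$, apply Lemma \ref{lem:bachcel} to \emph{both}, and subtract; the two main terms differ by $O\bigl((B/Q)^{n-RD-1}\bigr)$ by the binomial theorem, and the two error terms add. So you genuinely need the asymptotic formula on both boxes, not the one-sided bound you cite. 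Second, that binomial expansion (and indeed the whole box reduction) breaks down when $Q > B$, since $B/Q$ is bounded and the centred box can fail to contain the shifted one. The paper treats $Q>B$ as a separate trivial case, noting that the raw count is $\ll 1$ and the main term is $\ll_{\b f} B^{n-RD}$, both of which are absorbed by $B^{n-RD-\eta_3}Q^{M_2}$ once $M_2\geq 1$ and $\eta_3\leq 1$. Neither point is deep, but as written your argument doesn't supply either of them.
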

\begin{proof}
We first deal with the case $Q>B$. In this instance we plainly have  $$
 \#\{ \b x \in \Z^n:\max_i |x_i| \leq B,  \b x \equiv \b t  \bmod{Q} \} \ll 1,
$$ which is clearly $ \ll B^{n-RD}$ since the  Birch rank assumption implies    $n>RD  $. 
The estimate $\sigma_p (\b s , p^e )\leq \sigma_p$ shows that we always have $$
\sigma_\infty
\l(
\prod_{p\mid Q}
\sigma_p\l( \b t, p^{\nu_p(Q)} \r )
\r)
\l(
\prod_{p\nmid Q}
\sigma_p 
\r)
B^{n-RD}\leq  \sigma_\infty
\l(
\prod_{p }
\sigma_p 
\r)B^{n-RD}
\ll_{\b f } B^{n-RD}.$$  Therefore,  for all $\eta \in (0,1] $ and $M\geq 1 $ one has 
\[B^{n-RD} < B^{n-RD-1 }Q \leq B^{n-RD-\eta } Q^M.\] We are then free to assume that $Q \leq B$ for the rest of the proof.

Without loss of generality we can assume that $\b t \in (\Z \cap [0,Q))^n$.
 We  then   use the change of variables
$\b x =\b t +
Q \b z $ 
to write the counting function in our lemma as 
\[
=\#\left\{ \b z \in \Z^n:\max_i \left|z_i+\frac{t_i}{Q}\right| \leq \frac{B}{Q}, \b f (\b t +Q \b z)=\b 0 \right\}
.\] We now
 apply  Lemma~\ref{lem:bachcel}
 with 
 $g(\b z ):= \b f (\b t +Q \b z)$. 
We
 have 
\[
|z_i| \leq \frac{B}{Q} -1  
\Rightarrow
\left|z_i+\frac{y_i}{Q} \right| \leq \frac{B}{Q}
\Rightarrow
|z_i| \leq \frac{B}{Q} +1  
,\]
therefore, if we let 
$B_1=\frac{B}{Q} -1 $
and 
$B_2=\frac{B}{Q} +1 $
we see that 
\[
\#\{ \b z \in \Z^n:\max_i |z_i| \leq B_j, \b g (\b z)=\b 0 \}
, j=1,2
\]
give lower and upper bounds for the counting function in our lemma, respectively.
We note that $\b f $ and $\b g $ are related via a non-singular
linear change of variables,
hence $\mathfrak{B}(\b g)=\mathfrak{B}(\b f)$.
By Lemma~\ref{lem:bachcel}
and $B_j=B/Q+O(1)$
we therefore
obtain 
\[ 
J(\widetilde{\b g })
\l(\prod_p \tau_p \r)
 (B/Q+O(1) )^{n-RD}
+
O\l( B^{n-rD-\eta_2}
C
 \widetilde{C}^{M_1}
\r)
,\]
where 
\beq
{eq:rushrush}
{
   J(\widetilde{\b g }) 
=\int_{\boldsymbol{\gamma}\in \R^R}
\l(\int_{\boldsymbol{\zeta}\in \R^n}
\exp
\l(2 \pi i 
\sum_{i=1}^R \gamma_i \widetilde{g_i}(\boldsymbol{\zeta})\r)
\mathrm{d}\boldsymbol{\zeta}
\r)
\mathrm{d}\boldsymbol{\gamma}
}
 and  
\beq
{eq:RUSH Jacob's Ladder}
{
\tau_p
:=
\lim_{m\to+\infty }
\frac{
\#\l\{\b z \in (\Z/p^m\Z)^n: \b g (\b z)\equiv \b 0 \bmod{p^m}  \r\}
}{p^{m(n-R)}}
.}
  We note that $\widetilde{ \b g }(\b z )=\b f(Q \b z ) =
Q^D \b f (\b z)$, therefore, 
$\widetilde C \ll_{\b f } Q^D$.
The bound $0\leq 
t_i \leq Q$ and the identity 
$ (Q z_i + t_i )^k =
\sum_{i=0}^k
\l(  { k \choose  j }    Q^j   t_i ^{k-j }  \r)
z_i ^j
$ 
imply 
that $C \ll_{\b f } Q^ D$.
We conclude that 
$ 
C
 \widetilde{C}^{M_1} 
\ll_{\b f }  
  Q^{D(1+M_1)}
$.   Using the bound  $Q \leq B$ we have 
$$(B/Q+O(1) )^{n-RD} =
(B/Q )^{n-RD} +O((B/Q )^{n-RD-1})
,$$ which 
leads to the quantity in our lemma 
being equal to 
 \beq
{eq:rushtime}
{
  J(\widetilde{\b g })
\l(\prod_p \tau_p \r)
 \l(\frac{B}{Q}\r  )^{n-RD}
\hspace{-0,5cm}
+
O\l( 
B^{n-rD-\eta_2}
  Q^{D(1+M_1)}
+\l |
\l(\prod_p   \tau_p \r)
J(\widetilde{\b g })
\r |
(B/Q )^{n-RD-1}
\r)
.} 
 Using
$\widetilde{ \b g }(\boldsymbol{\zeta} )
 = \b f (Q\boldsymbol{\zeta})
=
Q^D 
  \b f ( \boldsymbol{\zeta})
 $ and the change of variables $Q^D \boldsymbol{\gamma}=\boldsymbol{\beta}$
shows that $
 J(\widetilde{\b g }) $ is 
\begin{align*}  
& \int_{\boldsymbol{\gamma}\in \R^R}
\l(\int_{\boldsymbol{\zeta}\in \R^n}
\exp
\l(2 \pi i 
\sum_{i=1}^R (Q^D \gamma_i)  f_i(\boldsymbol{\zeta})\r)
\mathrm{d}\boldsymbol{\zeta}
\r)
\mathrm{d}\boldsymbol{\gamma}
\\ 
=Q^{-RD}
&
\int_{\boldsymbol{\gamma}\in \R^R}
\l(\int_{\boldsymbol{\zeta}\in \R^n}
\exp
\l(2 \pi i 
\sum_{i=1}^R \beta_i   f_i(\boldsymbol{\zeta})\r)
\mathrm{d}\boldsymbol{\zeta}
\r)
\mathrm{d}\boldsymbol{\beta}
. 
\end{align*}
This is clearly $Q^{-RD}
J(\b f )$, in other words, we have seen that 
 $
 J(\widetilde{\b g })  
=
Q^{-RD  }\sigma_\infty$.
This converts~\eqref
{eq:rushtime} into 
  \beq
{eq:your disc is almost full}
{
Q^{-n  }
\sigma_\infty
\l(\prod_p \tau_p \r)
 B  ^{n-RD}
+
O_{\b f }\l( 
B^{n-RD-\eta_2}
Q^{D(1+M_1) }
+
  \l(\prod_p 
\tau_p  \r)
B^{n-RD-1}
Q^{- n  + 1}
\r)
.} For a prime 
  $p\nmid Q $
the change of variables  $(\Z/p^m \Z)^n \to
(\Z/p^m \Z)^n$, 
$\b z \mapsto \b x $ that is 
given by 
$\b x  \equiv \b t +Q \b z \bmod {p^m}$ is invertible modulo $p^m $, therefore,  
the numerator within the limit
in~\eqref
{eq:RUSH Jacob's Ladder}
equals 
\[
 \#\l\{\b z \in (\Z/p^m\Z)^n: \b f  (\b t+ Q \b z )\equiv \b 0 \bmod{p^m}  \r\}
=
\#\l\{\b x \in (\Z/p^m\Z)^n: \b f (\b x)\equiv \b 0 \bmod{p^m}  \r\}
.\] In particular,~\eqref
{eq:RUSH Jacob's Ladder}
agrees with $\sigma_p $.
If $p\mid Q$ a similar argument,
with the map under consideration being 
$\b x \equiv  Q p^{-\nu_p(Q)} 
 \b z 
\bmod{p^k } 
$,
shows that
\[
   \#\l\{\b z \in (\Z/p^m\Z)^n: \b f  (\b t+ Q \b z )\equiv \b 0 \bmod{p^m}  \r\}
=
\#\l\{\b x \in (\Z/p^m\Z)^n: \b f (
\b t +
p^{\nu_p(Q)}
\b x
)\equiv \b 0 \bmod{p^m}  \r\}
. \]
We can clearly 
 rewrite this as 
\[
\sum_{\substack{ \b w \in (\Z/p^m \Z)^n    \\    \b f (\b w ) \equiv \b 0 \bmod{p^m }      }}
\#\{\b x \in (\Z/p^m\Z)^n:    
\b w \equiv  
\b t +
p^{\nu_p(Q)}
\b x
\bmod{p^m } 
\} 
.\]
Now assume that $m > \nu_p(Q)$.
Then 
the inner sum contribution is non-zero only when 
$ \b w \equiv \b t \bmod{ p^{\nu_p(Q)}
 } $. We thus obtain \[
\sum_{\substack{ \b w \in (\Z/p^m \Z)^n   
 \\ 
   \b f (\b w ) \equiv \b 0 \bmod{p^m }    
\\
\b w \equiv \b t \bmod{ p^{\nu_p(Q)  }}
  }}
\#\l
\{\b x \in (\Z/p^m\Z)^n:    
\b x
\equiv 
\frac{ 
\b w  - 
\b t }{
 p^{ \nu_p(Q)}
}
\bmod{p^{m -\nu_p(Q) } } 
\r \} 
.\] The new inner cardinality clearly equals $p^{n \nu_p(Q)}$
since every $x_i \bmod {p^m}$ is uniquely
determined modulo $p^{m-\nu_p(Q)}$.
This gives the
following 
for all $p\mid Q$, 
 \[
\tau_p=
p^{n \nu_p(Q)  }
 \lim_{m\to+\infty }
\frac{
\#\l\{\b x \in (\Z/p^m\Z)^n: \b f (\b x)\equiv \b 0 \bmod{p^m}  , \b x \equiv \b t \bmod{p^{\nu_p(Q)}} \r\}
}{p^{m(n-R)}}
.\]
This is clearly at most 
$
p^{n \nu_p(Q)  }
\sigma_p $, 
therefore, 
 \[
 \prod_{p}
\tau_p 
=
\prod_{p\nmid Q }
 \sigma_p 
\prod_{p\mid Q}
p^{n \nu_p(Q)  }
\sigma_p(  \b  t , p^{\nu_p(Q)} )
\leq 
Q^n \prod_p
\sigma_p 
, \]
which, when injected in~\eqref
{eq:your disc is almost full},
shows that the asymptotic in our lemma holds with an error term 
\[
\ll_{\b f }
 B^{n-RD-\eta_2}
Q^{D(1+M_1)}
+  
B^{n-RD-1}
Q 
\ll
B^{n-RD -\min \{ \eta_2, 1 \}}
Q^{D(1+M_1)}
.\]
Letting  $M_2=\max\{1, {D(1+M_1)}\}$ and $\eta_3:= 
\min\{\eta_2, 1\}$
  concludes the proof of the lemma.\end{proof}
\begin{lemma}
 \label
{lem:vanter2}
 There exist
$M_3, \eta_4 
>0$ that only depend on $\b f $ 
such that 
for all
$Q\in \N$ and $ \Upsilon \subset \mathcal{X}(\Z/Q\Z)
$      
we have 
 \[
N(\Upsilon, B)
=
B ^{n-RD}
\frac{ \sigma_\infty }{2}  
 \l(
\prod_{p\nmid Q} 
\sigma_p
 \l(1-\frac{1}{p^{n-RD}}\r)
 \r)
\sum_{\b y \in \widehat{\Upsilon}}
 \prod_{p\mid Q} 
\sigma_p(\b y, p^{\nu_p(Q)})
+O_{\b f}
\l (
 B^{n-RD -\eta_4 } 
Q^{M_3}
\r )
,\]
where the implied constant 
depends at most on $\b f $.
 \end{lemma}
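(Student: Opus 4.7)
The plan is to combine Lemmas~\ref{lem:mobiusinvers} and~\ref{lem:vanter} and then collapse the Möbius summation into an Euler product. Starting from Lemma~\ref{lem:mobiusinvers}, $N(\Upsilon,B)$ is expressed as $\frac{1}{2}\sum_{\b y\in\widehat{\Upsilon}}\sum_{k\leq B^{\eta_1},\,\gcd(k,Q)=1}\mu(k)N_k(\b y,B)$ up to an error of size $O(Q^n B^{n-RD-\eta_1})$, where $N_k(\b y,B)$ counts $\b x\in\Z^n$ with $\max_i|x_i|\leq B/k$, $\b f(\b x)=\b 0$ and $\b x\equiv k^{-1}\b y\pmod Q$. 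Apply Lemma~\ref{lem:vanter} with $B$ replaced by $B/k$ and $\b t=k^{-1}\b y\bmod Q$ to obtain
\[
N_k(\b y,B)=\sigma_\infty\Big(\prod_{p\nmid Q}\sigma_p\Big)\prod_{p\mid Q}\sigma_p(k^{-1}\b y,p^{\nu_p(Q)})\Big(\frac{B}{k}\Big)^{n-RD}+O_{\b f}\!\Big(\Big(\frac{B}{k}\Big)^{n-RD-\eta_3}Q^{M_2}\Big).
\]

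The next key step is the homogeneity observation: since each $f_i$ is homogeneous of degree $D$ and $\gcd(k,Q)=1$, the map $\b x\mapsto k\b x$ is a bijection of $(\Z/p^m\Z)^n$ for every prime $p\mid Q$, sends solutions of $\b f(\b x)\equiv\b 0\pmod{p^m}$ to themselves (as $\b f(k\b x)=k^D\b f(\b x)$), and sends the residue class $k^{-1}\b y\bmod p^{\nu_p(Q)}$ to $\b y\bmod p^{\nu_p(Q)}$. Hence $\sigma_p(k^{-1}\b y,p^{\nu_p(Q)})=\sigma_p(\b y,p^{\nu_p(Q)})$ for every $p\mid Q$, so the main term of $N_k(\b y,B)$ is \emph{independent of $k$} up to the factor $k^{-(n-RD)}$.

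It then remains to carry out the sum over $k$. The Birch rank hypothesis implies $n-RD\geq 2$, so the Dirichlet series $\sum_{\gcd(k,Q)=1}\mu(k)/k^{n-RD}=\prod_{p\nmid Q}(1-p^{-(n-RD)})$ converges absolutely, and truncating at $k\leq B^{\eta_1}$ introduces a tail of size $O(B^{-\eta_1(n-RD-1)})$. Multiplying this tail by the total main-term size, which is bounded by $Q^n\sigma_\infty\prod_p\sigma_p\cdot B^{n-RD}\ll_{\b f} Q^n B^{n-RD}$ (using $\sigma_p(\b y,p^{\nu_p(Q)})\leq\sigma_p$ and absolute convergence of the singular series for systems of Birch rank), gives an admissible error. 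The contribution of the Lemma~\ref{lem:vanter} error terms, after summing over $\b y\in\widehat{\Upsilon}$ and $k\leq B^{\eta_1}$, is bounded by $Q^{n+M_2}B^{n-RD-\eta_3}\sum_{k\geq 1}k^{-(n-RD-\eta_3)}\ll_{\b f} Q^{n+M_2}B^{n-RD-\eta_3}$ provided we choose $\eta_3$ small enough that $n-RD-\eta_3>1$. Combining all three error sources and setting $M_3:=n+M_2$ and $\eta_4:=\min\{\eta_1,\eta_3,\eta_1(n-RD-1)\}$ with $\eta_1$ chosen so that $Q^n B^{-\eta_1}\leq Q^{M_3}B^{-\eta_4}$ in the intended range $Q\leq B^c$ (which can be assumed by the same trivial argument as in the proof of Lemma~\ref{lem:vanter}) yields the claimed formula.

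The main obstacle is really only bookkeeping: choosing the exponents $\eta_1,\eta_3$ and the final $M_3,\eta_4$ so that all three error terms (the Möbius truncation of Lemma~\ref{lem:mobiusinvers}, the arithmetic-progression error of Lemma~\ref{lem:vanter}, and the Dirichlet-series tail) are simultaneously $O_{\b f}(Q^{M_3}B^{n-RD-\eta_4})$. The homogeneity identity $\sigma_p(k^{-1}\b y,p^{\nu_p(Q)})=\sigma_p(\b y,p^{\nu_p(Q)})$ is what makes the $k$-sum separate cleanly from the $\b y$-sum and hence allows the Euler product $\prod_{p\nmid Q}(1-p^{-(n-RD)})$ to appear in the stated form.
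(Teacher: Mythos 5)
Your proof follows essentially the same route as the paper's: inject Lemma~\ref{lem:vanter} into Lemma~\ref{lem:mobiusinvers}, use the invariance $\sigma_p(k^{-1}\b y, p^{\nu_p(Q)}) = \sigma_p(\b y, p^{\nu_p(Q)})$ for $p\mid Q$, $\gcd(k,Q)=1$ to separate the $k$-sum from the $\b y$-sum, complete the $k$-sum to $\prod_{p\nmid Q}(1-p^{-(n-RD)})$, and collect the three error contributions. The only cosmetic differences are that you spell out the homogeneity argument behind the invariance identity (the paper states it without justification) and you bound the arithmetic-progression errors via the convergent sum $\sum_k k^{-(n-RD-\eta_3)}$ rather than the paper's cruder bound $\sum_{k\leq B^{\eta_1}}1\ll B^{\eta_1}$ combined with the choice $\eta_1=\eta_3/2$; both yield an admissible $O_{\b f}(Q^{M_3}B^{n-RD-\eta_4})$.
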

\begin{proof}
Injecting
Lemma~\ref{lem:vanter}
into Lemma~\ref{lem:mobiusinvers}
  shows
 that 
$N(\Upsilon,B)$ equals 
\begin{align*}
\frac{1}{2} 
\sum_{\b y \in \widehat{\Upsilon}}
\sum_{\substack{k \in \N \cap [1,B^{\eta_1} ]  \\ \gcd(k,Q) = 1   }} 
 &\mu(k)
\l(
\sigma_\infty
\prod_{p\nmid Q} \sigma_p
\prod_{p\mid Q} \sigma_p (\b y/k, p^{\nu_p(Q)})
\l(\frac{B}{k} \r)^{n-RD}
+
O\l(
Q^{M_2}
\l(\frac{B}{k} \r)^{n-RD- \eta_3 }
\r)
\r)  \\
& +O_{\b f }
\l(
 Q^n
B^{n-RD-\eta_1 }  
\r)
.\end{align*}
The fact that $\gcd(k,Q)=1$
shows that for $p\mid Q$ we have 
$
\sigma_p(\b y /k,p^{\nu_p(Q) })
=\sigma_p(\b y  ,p^{\nu_p(Q) })
$. Furthermore, 
the trivial estimate 
$ \#\widehat{\Upsilon}
\leq Q^n $ 
shows that $N(\Upsilon,B)$ equals 
 \[B ^{n-RD}
\frac{ \sigma_\infty }{2}  
 \l(
\prod_{p\nmid Q} \sigma_p
\r)
\l(
\sum_{\substack{k \in \N \cap [1,B^{\eta_1} ]  \\ \gcd(k,Q) = 1   }} 
 \frac{ \mu(k)  }{k^{n-RD} }  
 \r)
\sum_{\b y \in \widehat{\Upsilon}}
 \prod_{p\mid Q} 
\sigma_p(\b y, p^{\nu_p(Q)})
  \]
up to a
term whose modulus is 
\[
\ll
Q^{n+M_2}
\sum_{\substack{k \in \N \cap [1,B^{\eta_1} ]      }} 
  \l(\frac{B}{k} \r)^{n-RD- \eta_3 }
+
  Q^n
B^{n-RD-\eta_1 }  
\ll
Q^{n+M_2}
B^{n-RD-\eta_3+\eta_1}
\] due to the bound  $n-RD-\eta_3 \geq 0$ and 
$\sum_{k\leq B^{\eta_1}} 1 \ll B^{\eta_1}$. 
This is admissible as can be seen by taking 
  $\eta_1=\eta_3/2$ in Lemma~\ref{lem:mobiusinvers}.
The main term contains a sum over $k\in [1,B^{\eta_1}]$ that can be written as 
\[
\prod_{p\nmid Q }\l(1-\frac{1}{p^{n-RD}}\r)
+O\l(\sum_{k>B^{\eta_1}}
\frac{1}{k^2}\r)
=\prod_{p\nmid Q }\l(1-\frac{1}{p^{n-RD}}\r)
+O(B^{-\eta_1})
,\] because our assumptions on the Birch rank
 ensure that $n-RD\geq 2 $.
We plainly have
$
\sigma_p(\b y, p^{\nu_p(Q)})
\leq 
\sigma_p
$
hence 
the     contribution 
of the last term 
$O(B^{-\eta_1})$
is
\[\ll_{\b f }
\l(\prod_{p\nmid Q}\sigma_p\r)
B^{n-RD-\eta_1}
\sum_{\b y \in \widehat{\Upsilon}}
 \prod_{p\mid Q} 
\sigma_p(\b y, p^{\nu_p(Q)})
\leq \l(\prod_{p\nmid Q}\sigma_p\r)
 B^{n-RD-\eta_1}
\sum_{\b y \in \widehat{\Upsilon}}
 \prod_{p\mid Q} 
\sigma_p ,\]
which is $
(
\prod_p \sigma_p )
 B^{n-RD-\eta_1}
\#\Upsilon
\ll_{\b f }
 B^{n-RD-\eta_1}
\#\Upsilon
\leq  B^{n-RD-\eta_1}
Q^n
$.
This is admissible.
The main term is 
\[
B ^{n-RD}
\frac{ \sigma_\infty }{2}  
 \l(
\prod_{p\nmid Q} 
\sigma_p
 \l(1-\frac{1}{p^{n-RD}}\r)
 \r)
\sum_{\b y \in \widehat{\Upsilon}}
 \prod_{p\mid Q} 
\sigma_p(\b y, p^{\nu_p(Q)})
,\] which is as stated
in our lemma.
\end{proof}
We now record the end result of our investigation, which may be 
of independent interest. For completeness, we recall our assumptions.

\begin{proposition}
[Effective equidistribution for Birch systems]
\label
{prop:newbirch}
Assume that $f_1,\ldots, f_R$ are integer   homogeneous
polynomials in $n$ variables, all of the same degree $D$
and   that   
the Birch rank satisfies
	 $\mathfrak{B}(\b f)>2^{D-1}(D-1)R(R+1)$.
	Assume that $f_1=\cdots=f_R=0$ is smooth, that it has a $\Q$-rational point and denote by 
$\mathcal{X}$ 
the model given by taking its
 closure  
in $\P^{n-1}_\Z$. 
Then there exist
positive constants 
 $A, M , \eta$
that only depend on $\b f $
such that for all $Q\in \N$ only divisible by primes $p>A$
and all
$\Upsilon \subset \mathcal{X}(\Z/Q\Z)$, we have
\[\frac{\#\{ x \in \mathcal{X}(\ZZ): H(x) \leq B, x \bmod Q \in \Upsilon\}}
{\#\{ x \in \mathcal{X}(\ZZ): H(x) \leq B\}}=
 \frac{\#\Upsilon}{\# \mathcal{X}(\ZZ/Q\ZZ)}
+O(B^{-\eta}
Q^{M}
)
,\] 
where the implied constant is independent of $B$ and $Q$.
\end{proposition}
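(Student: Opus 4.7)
The plan is to deduce the proposition by comparing the asymptotic formula of Lemma \ref{lem:vanter2} applied to $\Upsilon$ with the same formula applied to $\Upsilon = \mathcal{X}(\Z/Q\Z)$. Choose $A$ large enough that $\mathcal{X}$ has good reduction at every prime $p > A$, and restrict to $Q$ supported on such primes. Set
\[
\kappa := \tfrac{1}{2}\sigma_\infty \prod_{p} \sigma_p \bigl(1 - p^{-(n-RD)}\bigr),
\]
which is positive by the hypotheses $X(\Q) \neq \emptyset$ and smoothness of $X$.

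The key identification is that, for any $\Upsilon \subset \mathcal{X}(\Z/Q\Z)$, the main term of Lemma \ref{lem:vanter2} is exactly $\kappa B^{n-RD} \cdot \#\Upsilon / \#\mathcal{X}(\Z/Q\Z)$. At each $p \mid Q$, Hensel's lemma yields
$\sigma_p(\b y, p^{\nu_p(Q)}) = p^{-\nu_p(Q)(n-R)}$
for every primitive $\b y$ with $\b f(\b y) \equiv \b 0 \bmod p^{\nu_p(Q)}$; since each projective point in $\mathcal{X}(\Z/Q\Z)$ has exactly $\phi(Q)$ primitive affine lifts, this gives
\[
\sum_{\b y \in \widehat{\Upsilon}} \prod_{p\mid Q} \sigma_p\bigl(\b y, p^{\nu_p(Q)}\bigr) = \frac{\phi(Q)\#\Upsilon}{Q^{n-R}}.
\]
A parallel Hensel argument applied to the full affine cone yields the companion identity
$\prod_{p\mid Q} \sigma_p(1 - p^{-(n-RD)}) = \phi(Q)\#\mathcal{X}(\Z/Q\Z)/Q^{n-R}$
(using $\#\mathcal{X}(\Z/p^e\Z) = \#\mathcal{X}(\F_p) \cdot p^{(e-1)(n-1-R)}$). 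Combining these two identities collapses the main term, so that the ratio of the two main terms is precisely $\#\Upsilon/\#\mathcal{X}(\Z/Q\Z)$.

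It remains to estimate the error in the ratio. Specialising Lemma \ref{lem:vanter2} to $Q = 1$ yields the $Q$-free bound $N(B) = \kappa B^{n-RD} + O(B^{n-RD-\eta_4})$, while the general case gives
$N(\Upsilon, B) = \kappa B^{n-RD} \#\Upsilon/\#\mathcal{X}(\Z/Q\Z) + O(B^{n-RD-\eta_4}Q^{M_3})$.
Using $\#\Upsilon/\#\mathcal{X}(\Z/Q\Z) \leq 1$ and $N(B) \gg B^{n-RD}$, a straightforward computation gives
\[
\frac{N(\Upsilon,B)}{N(B)} - \frac{\#\Upsilon}{\#\mathcal{X}(\Z/Q\Z)} \ll B^{-\eta_4} Q^{M_3},
\]
which is the stated bound with $\eta = \eta_4$ and $M = M_3$. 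The main technical hurdle is the Hensel computation of the local densities, which relies crucially on $Q$ being supported on primes of good reduction; this is precisely what allows the local $p$-adic densities to factor cleanly through the projective point counts $\#\mathcal{X}(\F_p)$ and thereby through $\#\mathcal{X}(\Z/Q\Z)$.
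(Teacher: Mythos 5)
Your proposal is correct, but it takes a genuinely different route from the paper's. The paper avoids computing the local densities $\sigma_p(\b y, p^{\nu_p(Q)})$ explicitly. Instead, having obtained from Lemma~\ref{lem:vanter2} that
\[
\frac{N(\Upsilon,B)}{\#\{x\in \mathcal{X}(\ZZ) : H(x)\leq B\}} = \sum_{\b y \in \widehat{\Upsilon}} \prod_{p\mid Q}\frac{\sigma_p(\b y, p^{\nu_p(Q)})}{\sigma_p\l(1-p^{-(n-RD)}\r)} + O_{\b f}\l(B^{-\min\{\eta_4,\eta_5\}} Q^{n+M_3}\r),
\]
the paper invokes Lemma~\ref{lem:sievemethodsrichert} (the qualitative equidistribution result, via Peyre's work on Tamagawa measures and Salberger's adelic volume computations) to infer that this constant local sum \emph{must} equal $\#\Upsilon/\#\mathcal{X}(\ZZ/Q\ZZ)$, since both are the $B\to\infty$ limit of the same ratio. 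You instead verify this identity directly: you use Hensel's lemma on the smooth affine cone minus the origin at primes of good reduction to get $\sigma_p(\b y, p^e) = p^{-e(n-R)}$ for primitive lifts $\b y$ of points of $\mathcal{X}(\Z/p^e\Z)$, count the $\phi(Q)$ primitive affine lifts of each projective point, and combine with the analogous identity relating $\sigma_p$ to $\#\mathcal{X}(\F_p)$. Both identities check out (for $\nu_p(Q)=1$ and, with the indicated $p^e$ formula, for general $e$), and they collapse the local sum exactly to $\#\Upsilon/\#\mathcal{X}(\ZZ/Q\ZZ)$, matching the paper's conclusion. The trade-off: the paper's route is shorter and leverages existing machinery as a black box, whereas your route is self-contained and makes transparent \emph{why} the constant equals the congruence density — at the cost of a mildly technical local-density computation (which your sketch outlines plausibly, though a fully written version would need to handle the non-primitive contributions to $\sigma_p$ via the geometric series in $p^{-(n-RD)}$ that produces the factor $1-p^{-(n-RD)}$). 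Your final error analysis, comparing the $Q$-dependent asymptotic against the $Q=1$ specialisation, is correct.
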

\begin{proof}
Using Birch's theorem \cite[Thm.~1]{Bir62} and M\"{o}bius inversion, there exists $\eta_5 >0$
that only depends on $\b f $
such that 
\[
\#\{ x \in \mathcal{X}(\ZZ): H(x) \leq B\}
=
\frac{1}{2\zeta(n-RD)}
\sigma_\infty
\l(\prod_p \sigma_p \r)
B^{n-RD}
+O_{\b f }\l(
B^{n-RD-\eta_5
} 
\r)
.\]
Moreover, our assumptions that $X(\QQ) \neq \emptyset$ and that $X$ is smooth implies that $\sigma_\infty>0$  and $\sigma_p>0$ for all primes $p$. Then 
Lemma~\ref{lem:vanter2}
gives 
 \[
\frac{
N(\Upsilon, B)
}{\#\{ x \in \mathcal{X}(\ZZ): H(x) \leq B\}
}
=  
\sum_{\b y \in \widehat{\Upsilon}}
 \prod_{p\mid Q} 
\frac{
\sigma_p(\b y, p^{\nu_p(Q)})
}{
\sigma_p
 \l(1-\frac{1}{p^{n-RD}}\r)}
+O_{\b f}
\l (
 B^{ -\min\{\eta_4,\eta_5\} } 
Q^{n+M_3}
\r )
\]
with an implied constant depending at most on $\b f $.
The result now follows from 
Lemma~\ref{lem:sievemethodsrichert}. 
\end{proof}
Proposition~\ref
{prop:newbirch}  proves the equidistribution
property~\eqref{eqn:eff_equ}, hence, we may apply Theorem~\ref{thm:main}.
To finish, it suffices to explain why we obtain the identity covariance matrix.

\begin{lemma} \label{lemma:Lefschetz}
	Let $k$ be a field of characteristic zero and 
	$Y \subset \PP^d$ a smooth complete intersection with $\dim Y \geq 3$,
	which is not contained in a hyperplane.
	Then $Y \cap H$ is irreducible for any hyperplane $H \subset \PP^d$.
\end{lemma}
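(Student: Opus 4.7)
The plan is to reduce the irreducibility of $Y \cap H$ to a linear-equivalence identity in $\Pic(Y)$. First, if $Y \subset H$, then $Y \cap H = Y$ is irreducible by hypothesis, so I may assume $Y \not\subset H$. Then the scheme-theoretic intersection $Y \cap H$ is an effective Cartier divisor on the smooth variety $Y$ in the linear equivalence class of $\mathcal{O}_Y(1)$. Since $\bar{k}$-irreducibility implies $k$-irreducibility, there is no loss in assuming the base field algebraically closed.

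The central input is that $\Pic(Y) = \ZZ \cdot [\mathcal{O}_Y(1)]$. This is a standard consequence of the Grothendieck--Lefschetz hyperplane theorem applied to $Y \subset \PP^d$ (applicable because $Y$ is a smooth complete intersection with $\dim Y \geq 3$); equivalently, one identifies $H^2(Y, \ZZ) \cong H^2(\PP^d, \ZZ) = \ZZ$ via classical Lefschetz and then lifts this via the exponential sequence using $H^1(Y, \mathcal{O}_Y) = H^2(Y, \mathcal{O}_Y) = 0$ (both from Lefschetz for coherent cohomology, again using $\dim Y \geq 3$). Granted this, decompose $Y \cap H$ as a Weil (equivalently Cartier) divisor $\sum_i n_i P_i$ with each $n_i \geq 1$ and the $P_i$ distinct prime components. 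Each $P_i$ is a nonzero effective divisor on $Y$, so $[P_i] = a_i[\mathcal{O}_Y(1)]$ with $a_i \geq 1$, since no negative or zero multiple of the ample generator is effective and nonzero. Equating classes in $\Pic(Y) = \ZZ$ then forces $1 = \sum_i n_i a_i$, hence exactly one summand, with $n_1 = a_1 = 1$. Therefore $Y \cap H$ has a single prime component occurring with multiplicity one, so it is irreducible (and in fact reduced).

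The only delicate step is the Picard computation, and its main subtlety is the boundary case $\dim Y = 3$, where Lefschetz only just suffices to reach $H^2$; the complete intersection hypothesis is what ensures the accompanying vanishing of $H^1$ and $H^2$ of the structure sheaf, allowing one to pass from the cohomological statement to $\Pic$. Everything else in the argument is formal manipulation of divisor classes in a free cyclic Picard group.
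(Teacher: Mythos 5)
Your proof is correct and takes essentially the same approach as the paper: both invoke the Grothendieck--Lefschetz theorem (the paper cites SGA~2, Exp.~XII, Cor.~3.7) to conclude $\Pic Y \cong \ZZ$ generated by the hyperplane class, and then argue that an effective divisor whose class is the positive generator cannot decompose nontrivially. The paper's version phrases the final step as a two-term decomposition $H\cap Y = D_1 + D_2$ forcing $[D_i]=0$ for some $i$, whereas you spell out the prime decomposition with multiplicities, but the argument is the same.
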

\begin{proof}
	The Lefschetz hyperplane section theorem implies that $\Pic Y \cong \ZZ$
	generated by the hyperplane class \cite[Exposé XII, Cor.~3.7]{SGA2}.
	Thus if $H \cap Y = D_1 + D_2$ for effective divisors $D_1$ and $D_2$,
	we must have $[D_i] = 0$ for some $i$; the result follows.
\end{proof}

Thus the intersections with the coordinate hyperplanes are irreducible, so they contain no common irreducible components.
The result therefore follows from Theorem \ref{thm:main}.
This completes the proof of Theorem \ref{thm:CI}, and Theorem \ref{thm:sexycase} follows immediately.

\subsection{Homogeneous spaces} \label{sec:symmvar}
Counting integral points on homogeneous spaces has a long history and we only mention a few relevant milestones: Duke, Rudnick and Sarnak \cite{DRS} used spectral analysis to deal with a class of (affine) symmetric varieties, 
Gorodnik and Nevo \cite{GorNevo} used the mean ergodic theorem in order to obtain error terms with a power saving; Nevo and Sarnak \cite{NevoSar} have recently applied such counting results to the problem of finding (and estimating the number of) prime or almost-prime points on such varieties.

In this paper we consider the class of symmetric varieties studied by Browning and Gorodnik in \cite{BroGor} and begin by recalling their set-up, which is  more general than  \cite{NevoSar}.
Let $G$ be a connected semisimple algebraic group defined over $\mathbb{Q}$ and let $\iota \colon G \to \mathrm{GL}_n$ be a linear representation defined over $\mathbb{Q}$ with finite kernel. Let $Y \subset \mathbb{A}^n_\Q$ be a subvariety which is left invariant under the action of $G$ via $\iota$. We assume that  $G$ acts transitively on $Y$, so that $Y$ has the form $G/L$ where $L$ is an algebraic subgroup defined over $\mathbb{Q}$. We denote by $Y(\Z) = \mathcal{Y}(\Z)$, where $\mathcal{Y} \subset \A^n_\Z$ is the model given by the closure of $Y$ in $\A^n_\Z$. We assume that $Y(\mathbb{Z}) \ne \emptyset$.
Moreover, the following assumptions are made:
\begin{enumerate}
\item $L$ is a symmetric subgroup of $G$, meaning the Lie algebra of $L$ is the fixed locus of a non-trivial involution defined over $\mathbb{Q}$; 
\item the connected component of $L$ has no non-trivial $\mathbb{Q}$-rational characters; 
\item the group $G$ is $\mathbb{Q}$-simple; 
\item the group $G(\mathbb{R})$ is connected and has no compact factors.
\end{enumerate}
This is the class of symmetric varieties $Y$ which we shall be interested in. For $\b y \in Y(\Z)$,  we define its height by $H(\b y) = \max_{i \in \{1, \ldots, n\}} |y_i|$. We use the following result, stated in \cite[Prop.~3.1]{BroGor}.

\begin{proposition}\label{prop:BG} There exists $\delta > 0$ such that for every $\ell \in \mathbb{N}$ and every $\b  \xi \in Y(\Z/\ell \Z)$ we have
\begin{align*}
\#\{ \b  y \in Y(\Z) : H(\b y) \le B, \b y \equiv \b \xi \bmod \ell \} \\
= \mu_\infty(Y; B) \prod_{p \text{ prime}} \hat{\mu}_p(Y; \b \xi, \ell) &+ O(\ell^{\dim(L)+2\dim(G)} \mu_\infty(Y;B)^{1-\delta})
\end{align*}
as $B \to \infty$, where 
$$\hat \mu_p(Y; \b  \xi, \ell)=\lim_{t \to \infty} p^{-t \dim(Y)} \#\{ \b  y \in Y(\Z/p^t\Z) : \b  y \equiv \b  \xi \bmod{p^{v_p(\ell)}} \}$$
is the $p$-adic density and $\mu_\infty(Y;B)$ is the real density, as defined in \cite[(1.6)]{BroGor}.
\end{proposition}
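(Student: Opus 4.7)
Since the proposition is quoted verbatim from Browning--Gorodnik, the ``proof'' in the paper under discussion will simply be a reference. Here is how I would approach a proof from scratch, following the strategy of \cite{BroGor}.

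The first step is to lift the counting problem to the group $G$. Because $G$ acts transitively on $Y$ with stabiliser $L$, and because $L$ satisfies the symmetric / no-character / $\Q$-simplicity / connectedness hypotheses listed above, one can write $Y(\Z)$ as a finite union of $G(\Z)$-orbits $G(\Z)\cdot \b y_j$ (a consequence of finiteness of class numbers for symmetric spaces under these assumptions). The count of $\b y \in Y(\Z)$ with $H(\b y) \le B$ and $\b y \equiv \b\xi \bmod \ell$ then becomes a sum, over orbit representatives, of the number of $g \in \Gamma_j \backslash G(\R)$ with $g \cdot \b y_j$ lying in a certain ``box'' $B_T \subset Y(\R)$ defined by the height and in a specific congruence class modulo $\ell$. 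The congruence class modulo $\ell$ can be rephrased as requiring $g$ to lie in a particular coset of the principal congruence subgroup $\Gamma(\ell) \subset G(\Z)$, whose index in $\Gamma_j$ grows polynomially in $\ell$.

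The second step is to apply a quantitative mean ergodic / lattice-point counting theorem on the homogeneous space $G(\R)/\Gamma(\ell)$. For semisimple $G$ with no compact factors, Gorodnik--Nevo (and refinements) give an asymptotic for the number of lattice points in $B_T$ with a power-saving error of the form $\mathrm{vol}(B_T)^{1-\delta_0}$, where the implied constant depends on the volume of $G(\R)/\Gamma(\ell)$ through a Sobolev norm. The key quantitative input is a spectral gap for the automorphic representation on $L^2(G(\R)/\Gamma(\ell))$, which is uniform in $\ell$ thanks to property $(T)$ (or property $(\tau)$) for $G$. Tracking the polynomial dependence of the Sobolev constant on $[\Gamma : \Gamma(\ell)] \ll \ell^{\dim G}$ yields the error $\ell^{\dim L + 2\dim G}\,\mu_\infty(Y;B)^{1-\delta}$ quoted in the proposition, where $\mu_\infty(Y;B) \asymp \mathrm{vol}(B_T)$.

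The third step is to identify the main term with the factorised expression $\mu_\infty(Y;B)\prod_p \hat\mu_p(Y;\b\xi,\ell)$. Summing over the finitely many $G(\Z)$-orbits reconstructs, via the product formula for the Tamagawa measure on $G(\mathbb{A})$ (together with the triviality of the Tamagawa number contribution arising from assumptions (2)--(4)), an Euler product over all primes. At primes $p \nmid \ell$ this gives the usual local density $\hat\mu_p(Y;\b\xi,\ell) = \hat\mu_p(Y)$, while at primes $p\mid \ell$ the coset condition cuts out precisely the local factor $\hat\mu_p(Y;\b\xi,\ell)$ defined in the statement.

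The main obstacle, and the heart of the Browning--Gorodnik argument, is the third-to-last sentence of the previous paragraph: obtaining a power-saving error term with \emph{explicit polynomial dependence on the level} $\ell$. This rests on uniform spectral gaps and careful bookkeeping of how Sobolev norms of indicator functions of congruence cosets scale with $\ell$; the exponent $\dim L + 2\dim G$ is not optimal but is quite sufficient for our application, where we will take $\ell$ of size $B^{o(1)}$ so that the error term is beaten by any fixed power of $B$.
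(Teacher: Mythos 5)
You correctly observe that the paper does not prove this proposition but simply cites it as \cite[Prop.~3.1]{BroGor}, so your ``proof'' matches the paper's approach exactly. Your supplementary sketch of the underlying Browning--Gorodnik argument (orbit decomposition via finiteness of class numbers, quantitative lattice-point counting on $G(\R)/\Gamma(\ell)$ with uniform spectral gap via property $(\tau)$, and reassembly of the Euler product from Tamagawa-measure considerations) is a faithful high-level account of that reference, with the caveat that the precise exponent $\dim L + 2\dim G$ reflects rather delicate bookkeeping in \cite{BroGor} that your sketch only gestures at.
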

It follows from this and Hensel's lemma that 
\begin{align*}
&\#\{ \b y \in Y(\Z) : H(\b y) \le B, \b y \equiv \b \xi \bmod \ell \} \\
&= \frac{\#\{ \b y \in Y(\Z/\ell \Z) : \b y = \b \xi \}}{\#Y(\Z/\ell\Z)} N(Y,B) + O\left(\ell^{\dim(L)+2\dim(G)} N(Y,B)^{1-\delta} \right),
\end{align*}
where $
N(Y,B)(\Z) = \#\{ \b x \in Y(\mathbb{Z}) : H(\b x) \le B \}. $
The effective equidistribution property \eqref{eqn:eff_equ} is therefore easily seen to hold in this case. Theorem \ref{thm:main} thus shows the following.
\begin{theorem} \label{thm:symvar}
Let $Y \subset \A^n$ be a symmetric variety in the class described above and let $\Omega_B=\{\b y \in Y(\Z): H(\b y) \leq B , y_1\cdots y_n \neq 0 \}$ be equipped with the uniform probability measure. Then as $B \to \infty$ the random vectors 
\[\Omega_B \to \R^n,  \quad
	\b y \mapsto \left( \frac {\omega(y_1) - c_{1,1} \log \log B}{\sqrt{c_{1,1} \log \log B}}, \ldots, \frac {\omega(y_n) - c_{n,n} \log \log B}{\sqrt{c_{n,n} \log \log B}} \right)
 \]
 converge in distribution to the central multivariate distribution with covariance matrix whose $(i,j)$-entry is $c_{i,j}$, the number of common irreducible components of $y_i = 0$ and $y_j = 0$ in $Y$.
\end{theorem}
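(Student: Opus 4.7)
The plan is to obtain Theorem \ref{thm:symvar} as a direct corollary of Theorem \ref{thm:main}. First, I view $X = Y$ as a quasi-projective subvariety of $\PP^n$ via the standard open immersion $\A^n \hookrightarrow \PP^n$, with integral model $\mathcal{Y}$ the closure of $Y$ in $\A^n_\Z$ and height $H$ inherited from projective space (which matches the given $H(\b y) = \max_i |y_i|$). The divisors will be $D_i := Y \cap \{y_i = 0\}$ for $i = 1, \ldots, n$, and the task reduces to verifying the hypothesis \eqref{eqn:eff_equ} of Theorem \ref{thm:main}.

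The verification of effective equidistribution is essentially already carried out in the paragraph preceding the theorem statement. Starting from the displayed asymptotic for $\#\{\b y \in Y(\Z) : H(\b y) \le B,\ \b y \equiv \b\xi \bmod \ell\}$ (which follows from Proposition \ref{prop:BG} together with Hensel's lemma), I would sum over $\b\xi \in \Upsilon \subset \mathcal{Y}(\Z/\ell\Z)$ and divide by $N(Y,B)$. Using the polynomial lower bound $N(Y,B) \gg \mu_\infty(Y;B) \gg B^{c}$ for some $c > 0$, available from transitivity of the $G$-action and $Y(\Z) \neq \emptyset$, this produces
\[
\frac{\#\{\b y \in \mathcal{Y}(\Z) : H(\b y) \le B,\ \b y \bmod \ell \in \Upsilon\}}{\#\{\b y \in \mathcal{Y}(\Z) : H(\b y) \le B\}}
= \frac{\#\Upsilon}{\#\mathcal{Y}(\Z/\ell\Z)} + O\bigl(\ell^{M} B^{-\eta}\bigr)
\]
with $M = \dim(L) + 2\dim(G) + n$ and $\eta = c\delta$. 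Since the Browning--Gorodnik estimate imposes no restriction on the prime factors of $\ell$, the condition \eqref{eqn:eff_equ} holds (and one may in fact take the constant $A = 1$).

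Invoking Theorem \ref{thm:main} for $\mathcal{Y}$ with the divisors $D_1, \ldots, D_n$ then yields the desired convergence to a central multivariate normal distribution, the $(i,j)$-entry of whose covariance matrix is $c_{i,j}/\sqrt{c_{i,i}c_{j,j}}$, where $c_{i,j}$ counts the common irreducible components of $D_i$ and $D_j$; this matches the formula in Theorem \ref{thm:symvar}. The main obstacle is really the verification of \eqref{eqn:eff_equ}, but given the mixing/ergodic input of Proposition \ref{prop:BG} it is routine, so no further argument is needed beyond a direct appeal to Theorem \ref{thm:main}.
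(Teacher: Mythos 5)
Your proposal is correct and follows essentially the same route as the paper: invoke Proposition \ref{prop:BG} and Hensel's lemma to obtain the congruence asymptotic, sum over $\Upsilon$ and divide by $N(Y,B)$ to verify \eqref{eqn:eff_equ}, then appeal directly to Theorem \ref{thm:main} with the coordinate hyperplane divisors. The paper simply writes that \eqref{eqn:eff_equ} is ``therefore easily seen to hold''; you make explicit the two small points it leaves implicit, namely that summing over $\Upsilon$ only inflates the power of $\ell$ by at most $n$, and that the error $N(Y,B)^{1-\delta}/N(Y,B) = N(Y,B)^{-\delta}$ can be converted to a power of $B$ because $N(Y,B) \gg B^{c}$ — a fact which indeed holds in the Browning--Gorodnik setting since $\mu_\infty(Y;B)$ grows polynomially in $B$. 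Your observation that one may take $A=1$ is also correct, since the Browning--Gorodnik estimate imposes no restriction on $\ell$.
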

Of course Theorem \ref{thm:main} also gives a version for general divisors $D_i$.
We now explain how \autoref{thm:indQF} and \autoref{thm:deteqn} are corollaries of the above theorem and why the covariance matrix is the identity in these examples.

\begin{proof}[Proof of Theorem \ref{thm:indQF}]
That the varieties in \autoref{thm:indQF} fall under the setting of this section is explained in \cite[Rem.~1.3]{BroGor}.
The conclusion now follows immediately from Theorem \ref{thm:symvar}, as it is easily checked that the intersection with each coordinate hyperplane is irreducible (for $n=3$ this follows from our assumption that $-k\disc(Q)$ is not a perfect square). 
\end{proof}

\begin{proof}[Proof of Theorem \ref{thm:deteqn}]
That the varieties in \autoref{thm:deteqn} fall under the setting of this section can be seen as follows. Let $G = \mathrm{SL}_n \times \mathrm{SL}_n$ act on the space $\mathcal{M}_n$ of $n \times n$ matrices by mapping $M \in \mathcal{M}_n$ to $g^{-1} M h$, for  $(g, h) \in G$. Then $V_{n,k} = G/L$, with $L = \mathrm{SL}_n$ being diagonally embedded in $G$.
Here again the conclusion easily follows from the fact that the intersection with each coordinate hyperplane is irreducible. This can be proved, for example, by applying a suitable version of the Lefschetz hyperplane section to the intersection of a hyperplane with the projectivised hypersurface $\det(M) = kz^n$. 
\end{proof}

\begin{remark}
	Let us note that for general choices of symmetric varieties $Y \subset \mathbb{A}^n$ in Theorem~\ref{thm:symvar}, one can  obtain non-identity covariance matrices. For example, let $\sigma_d:\SL_n \to \GL_N$ be the $d$th symmetric power representation and take $G = \SL_n \times \SL_n$ with the representation $G \to \GL_N, (g,h) \mapsto \sigma_d(g)^{-1} \sigma_d(h)$. Then $Y$, given by the orbit of the identity matrix, has the stated property for $d > 1$  (this is a variant of the construction in Remark \ref{rem:d-uple}).
\end{remark}

\subsection{Conics}
We now prove our results on conics from \S \ref{sec:conics}. 
\subsubsection{Proof of Theorem \ref{thm:conics}}
Any smooth conic with a rational point is isomorphic to the projective line. The effective equidistribution property \eqref{eqn:eff_equ} is known to hold for the projective line \cite[Prop.~2.1]{LS19}. The result \emph{loc.~cit.} is proved for the standard height on $\P^1_\Q$, but a minor modification shows that property  \eqref{eqn:eff_equ} in fact holds for more general choices of height function, for some choice of $M$ and $\eta$, which in particular shows that the hypotheses of Theorem \ref{thm:main} hold in this case. This therefore immediately gives the result. \qed

\subsubsection{Proof of Theorem \ref{thm:conic_singular}}
Let $C$ be as in Theorem \ref{thm:conic_singular}.
Let $D'_i: x_i = 0$ and $D_i = D'_{i,\mathrm{red}}$.
As the covariance matrix is singular, Theorem \ref{thm:main}
shows that there is
a linear relation between the divisors $D_i$
in $\Div C$.
But we have $D_i' = b_iD_i$ for some $b_i \in \{1,2\}$.
Thus this also gives a relation
$$a_0D_0' + a_1D_1' + a_2D_2' = 0$$
between the $D_i'$. We take the minimal such relation, so
that $\gcd(a_0,a_1,a_2) =1$.
Moreover, as $\deg D_i' = 2$, we find that
$a_0 + a_1 + a_2 = 0$. 
Changing signs as required and permuting coordinates, we obtain the relation
$$x_1^{a_1}x_2^{a_2} = cx_0^{a_1+a_2}$$
in the homogeneous coordinate ring of $C$, for some $c \in \QQ$.
But the only relation in this ring is the equation of the
conic $C: Q(x_0,x_1,x_2) = 0$, hence
$Q \mid x_1^{a_1}x_2^{a_2} - cx_0^{a_1+a_2}$.
But $\gcd(a_1,a_2) = 1$, implies that this polynomial is irreducible,
hence we must have $Q = c'(x_1^{a_1}x_2^{a_2} - cx_0^{a_1+a_2})$ 
for some $c' \in \QQ$. As $\deg Q = 2$ we have $a_1 = a_2 = 1$,
as required. \qed

\subsection{A cubic surface}
Consider the cubic surface
$$X: \quad x_1x_2x_3 = x_0^3 \qquad \subset \PP^3_\Q.$$
With respect to the coordinate hyperplanes $x_i = 0$, we conjecture that an analogue
of Theorem \ref{thm:main} holds with covariance matrix 
\begin{equation} \label{eqn:conj}
\begin{pmatrix}
	1 & \sqrt{5}/3 & \sqrt{5}/3 & \sqrt{5}/3 \\
	\sqrt{5}/3 & 1 & 2/5 & 2/5\\
	\sqrt{5}/3 & 2/5 & 1 & 2/5\\
	\sqrt{5}/3 & 2/5 & 2/5 & 1
\end{pmatrix}.
\end{equation}
Let us explain how we obtained this. First $X$ is singular, and the counting problem should really take place on the minimal desingularisation $\widetilde{X}$ of $X$. We then naively apply the formula from Theorem \ref{thm:main} with respect to the divisors $D_i$ on $\widetilde{X}$ given by the pull backs of the (reduced) hyperplanes $H_i: x_i = 0$. For $i \neq 0$ the $H_i$ are the lines $x_i= x_0= 0$, whereas $H_0$ is the union of these three lines. Any two lines meet in a singular point of type $A_2$, and these are all the singular points. The singularities are resolved by blowing-up twice, which introduces $2$ new exceptional curves. As any line contains two singular points, a calculation using the above considerations shows that
$$c_{i,i}=
\begin{cases}
	5, & i \in \{1,2,3\}, \\
	9, & i =0.
\end{cases}
\quad  \quad
c_{i,j} =
\begin{cases}
	2, & i \neq j \in \{1,2,3\}, \\
	5, & i = 0, j \in \{1,2,3\}.
\end{cases}
$$
The formula \eqref{eqn:conj} now easily follows. Note that this matrix is singular,  due to the obvious relation $H_0=H_1 + H_2 + H_3$.

How would one go about proving this? Firstly, it follows from \cite[\S3.10]{CLT10} that the rational points on $\widetilde{X}$ are equidistributed, which gives the main term in \eqref{eqn:eff_equ}. 
One can prove that
\begin{equation} \label{eqn:logs}
\#\{ x \in \widetilde{\mathcal{X}}(\ZZ): H(x) \leq B, x \bmod Q \in \Upsilon\} = B P_\Upsilon(\log B) + O_\Upsilon(B^{1-\eta}),
\end{equation}
where $\eta>0$ and 
$P_\Upsilon$ is a polynomial of degree $6$ whose coefficients depend on $\Upsilon$. 
By equidistribution one understands  the leading coefficient of $P_\Upsilon$; the challenge lies with controlling the dependence on the lower order terms
of $P_\Upsilon$ and whether an asymptotic formula of the shape \eqref{eqn:logs}, with powers of $\log B$ appearing, can be used to obtain an Erd\H{o}s--Kac law.

\end{document}